\DeclareMathOperator{\ima}{im}
\DeclareMathOperator{\rk}{rk}
\DeclareMathOperator{\re}{Re}
\newcommand{\dR}{\mathbb{R}}
\newcommand{\dC}{\mathbb{C}}
\newcommand{\dN}{\mathbb{N}}
\newcommand{\K}{\mathbb{K}}
\newcommand{\Hc}{\mathcal{H}}
\newcommand{\Ac}{\mathcal{A}}
\newcommand{\Bc}{\mathcal{B}}
\newcommand{\Cc}{\mathcal{C}}
\newcommand{\Dc}{\mathcal{D}}
\newcommand{\Tc}{\mathcal{T}}
\newlength\@SizeOfCirc%
\newcommand{\CricArrowRight}[1]{%
    \setlength{\@SizeOfCirc}{\maxof{\widthof{#1}}{\heightof{#1}}}%
    \tikz [x=1.0ex,y=1.0ex,line width=.15ex, draw=black]%
        \draw [->,anchor=center]%
            node (0,0) {#1}%
            (0,1.2\@SizeOfCirc) arc (85:-240:1.2\@SizeOfCirc);%
}%
\newenvironment{smallbmatrix}%          environment name
{\left[\begin{smallmatrix}}%            begin code
{\end{smallmatrix}\right]}%             end code
\tikzset{
block/.style={
  draw, 
  rectangle, 
  minimum height=1.5cm, 
  minimum width=2.5cm, align=center,
  fill=blue!20
  }, 
line/.style={->,>=latex'}
}
\tikzset{negated/.style={
      decoration={markings,
           mark= at position 0.5 with {
               \node[transform shape] (tempnode) {$\backslash\!\!\backslash$};
            }
       },
       postaction={decorate}
    }
}
\newtheorem{definition}{Definition}[section]
\newtheorem{proposition}[definition]{Proposition}
\newtheorem{lemma}[definition]{Lemma}
\newtheorem{corollary}[definition]{Corollary}
\newtheorem{assumption}[definition]{Assumption}
\newtheorem{remark}[definition]{Remark}
\theoremstyle{definition}
\newtheorem{example}[definition]{Example}
\title{On discrete-time dissipative port-Hamiltonian (descriptor) systems}
\author{Karim Cherifi\thanks{Institut f\"{u}r Mathematik, Technische Universität Berlin, Stra\ss e des 17.\ Juni 136, 10623 Berlin, Germany  (\texttt{cherifi@math.tu-berlin.de}).} \and Hannes Gernandt\thanks{Fraunhofer IEG, Fraunhofer Research Institution for Energy Infrastructures and Geothermal Systems IEG, Cottbus, Gulbener Straße 23, 03046 Cottbus, Germany  (\texttt{hannes.gernandt@ieg.fraunhofer.de}).}
\and Dorothea Hinsen\thanks{ Institut f\"{u}r Mathematik, Technische Universität Berlin, Stra\ss e des 17.\ Juni 136, 10623 Berlin, Germany   (\texttt{hinsen@math.tu-berlin.de}).} \and Volker Mehrmann \thanks{Institut f\"{u}r Mathematik, Technische Universität Berlin, Stra\ss e des 17.\ Juni 136, 10623 Berlin, Germany   (\texttt{hinsen@math.tu-berlin.de}).} }
\begin{document}
\maketitle

\begin{abstract}
Port-Hamiltonian (pH) systems have been studied extensively for linear continuous-time dynamical systems. This manuscript presents a discrete-time pH descriptor formulation for linear, completely causal, scattering passive dynamical systems based on the system coefficients. The relation of this formulation to positive and bounded real systems and the characterization via positive semidefinite solutions of Kalman-Yakubovich-Popov inequalities is also studied.  
\end{abstract}

\section{Introduction}
System modeling with linear continuous-time  descriptor systems of the form
\begin{align}
\label{cont_DAE}
E\dot x&=Ax+Bu,\quad y=Cx+Du,\quad t\geq 0,\quad x(0)=x^0,
\end{align}
where $x\in C^1(\dR_+,\K^n)$, $u,y\in C(\dR_+,\K^m)$, $E,A\in\K^{n\times n}$, $B\in\K^{n\times m}$, $C\in\K^{m\times n}$ and possibly singular $E$ has been extensively studied, see e.g. \cite{Dai89,KunM06,Meh91}.
Here $\K$ is the field of real $\dR$ or complex numbers $\dC$, $\dR_+$  are the nonnegative real numbers, $\dC_+$ are the complex numbers with nonnegative real part, and $C^k(\dR_+,\K^n)$ is the  set of $k$-times continuously differentiable functions $\dR_+\to \K^n$, where we drop the superscript for $k=0$.

In a similar way linear discrete-time descriptor systems of the form
\begin{align}
\label{discr_DAE}
Ex_{k+1}&=Ax_k+Bu_k,\quad y_k=Cx_k+Du_k,\quad k\geq 0,\quad x_0=x^0,
\end{align}
with sequences of vectors $x_k\in\K^n$, $u_k,y_k\in\K^m$, $k=0,1,2,\ldots$, $E,A\in\K^{n\times n}$, $B\in\K^{n\times m}$, $C\in\K^{m\times n}$, with possibly singular $E$, are well studied, see e.g.
\cite{BanV19,Bru09,Dai89,Meh91}.

Typical examples of discrete-time systems are Lyontief models in economics \cite{Lue77,LueA77,MerL89} or Leslie models in population dynamics, see \cite{Bru09,Cam79}, sampled data systems \cite{CheF91}, or arising from the discretization of continuous-time systems. The descriptor formulation arises in backward Leslie models and when implicit discretization methods are used for continuous-time systems. 

While for general linear descriptor systems  many system properties, like controllability, reachability, observability, 
%stabilizability, detectability, 
passivity, and stability, can be characterized via linear algebra calculations, in the last $20$ years for continuous-time systems a new model class has received a lot of attention. These are the dissipative port-Hamiltonian (pH) descriptor systems for which many of these system properties are directly encoded in the system structure and which satisfy a lot of other important properties that make them an ideal class for energy-based modeling,  see \cite{BeaMXZ18,MehMW18,MehM19,MehS23,AchAM23,SchM18,SchJ14,Sch13}. 

In some recent papers it has also been analyzed when a general descriptor system is equivalent to a pH descriptor system \cite{BeaMV19,BeaMX15,MehS23} by considering different characterizations of passivity, such as positive realness or the solvability of Kalman-Yakubovich-Popov inequalities, see the detailed analysis in \cite{CheGH22}, where the relationship between these concepts is studied, see also the detailed analysis for scattering and impedance passive standard state-space systems summarized in \cite[p.\ 135]{BroLME20}. We stress that the representation as a pH (descriptor) system is generally not unique and any positive definite solution of the associated Kalman-Yakubovich-Popov inequalities will lead to a different pH representation. 

For discrete-time descriptor systems, the theory is much less developed and not even a clear definition of a dissipative discrete-time pH system is available. Most definitions are based on appropriate discretizations of continuous-time systems. In \cite{KotL19} for standard state-space systems and in \cite{MehM19} for descriptor systems  a definition of discretized pH system is presented by discretizing a continuous-time system using collocation
methods or symplectic integrators.
In \cite{FalH16} a numerical scheme is specially designed to preserve the pH structure and the power balance equation to define a class of discrete-time pH systems. In \cite{MorMMN19,YalSK15}   discrete-time port-controlled Hamiltonian systems are constructed via a discrete gradient structure.

Many more discretization based methods are constructed for purely Hamiltonian models, including discrete gradient methods or geometric integrators, see e.g.\ \cite{HaiLW06} or \cite{LaiA06}. We do not discuss these approaches here because they are mainly concerned with the preservation of the Hamiltonian and do not address the dissipation inequality.

In \cite{Mac22} the synthesis of discrete-time systems is based on the mapping (via state-feedback) of the open-loop error system to a target one in pH form. In \cite{TalCS06} the authors propose a discrete-time pH system formulation based on Dirac structures for positive real systems. In \cite{StrSSF05}, for unconstrained robotics systems, a sampled data construction of discrete-time systems with a Dirac structure is presented.

In \cite{CerSB07} the authors study the composition of Dirac structures in power variables and wave variables (scattering)
representation. They show that the latter case corresponds to the Redheffer star product of unitary mapping.

In \cite{TalCS06} for standard state-space systems a discrete-time pH version is constructed via Dirac structures but this approach has not yet been extended to descriptor systems.

All these publications mainly consider pH systems in the positive real case or for impedance passive systems. However, in the literature, bounded real and corresponding scattering passive systems are also studied. The latter class is
motivated by applications in quantum mechanics, quantum field theory  and digital integrated circuits design. E.g.\ in \cite{BraGZC22}, scattering data from a bounded real transfer function is used in macro-modeling for Electronic Design Automation. This is enabled by passivity preserving realizations that result in a scattering passive system representation.

In this manuscript, we take a different approach and introduce a definition of standard discrete-time pH systems in the scattering passive setting by considering positive definite solutions of Kalman-Yakubovich-Popov (KYP) inequalities for discrete-time systems. The definition is then extended to impedance passive systems via external Cayley transform. In the standard case, i.e. if $E=I$, the pH definition is based only on the coefficient matrices of the state-space realization and hence allows for both discrete-time systems and discretization of a continuous-time system. We then analyze the relationship to other passivity definitions such as positive and bounded realness, and present implication charts that show the subtle differences and extra assumptions that have to be made. As in the continuous-time case, the representation of discrete-time pH descriptor systems is, in general, not unique and different representations can e.g. be obtained via different positive definite solutions of discrete-time KYP inequalities as well as different system transformations. 

In Section \ref{sec:Dissipation}, we study the relation of the obtained representation for semiexplicit completely causal descriptor systems to the concept of passivity and transfer function properties. An overview of the relations between positive real, KYP and passivity for discrete-time standard state-space systems has been given in \cite[Section 3.15]{BroLME20}. 
In Section~\ref{sec:phDef} we propose an extension of the pH framework to discrete-time descriptor systems using the scattering supply rate rather than the impedance supply rate which is typically used to define continuous-time pH systems.
In Section \ref{sec: externalcayley} we discuss the relationship between scattering and impedance passive discrete-time pH representations using the external Cayley transform. This transform was discussed previously in the literature \cite{Sta03} for standard state-space systems and we extend this approach to pH descriptor systems.
In Section \ref{sec:cont_to_discr}, we study passivity preserving discretizations of continuous-time systems and show that they lead
to passive discrete-time systems and eventually to discrete-time pH systems. 

%Finally, the geometric formulation of discrete-time port-Hamiltonian systems and the interconnection of these systems is discussed in Section \ref{sec:Geometric_form}.

\section{Notation and preliminaries} \label{sec:prelim}

We assume throughout the paper that the descriptor system, (continuous- or discrete-time)  as well as the pair of coefficient matrices $(E,A)$ are \emph{regular}, i.e.\ $\det(\lambda E-A)\neq 0$ holds for some $\lambda\in\dC$. The set of all such $\lambda\in\dC$ for which $\lambda E-A$ is invertible is called the \emph{resolvent set} and is denoted by $\rho(E,A)$. The complement $\dC\setminus\rho(E,A)$ is called the \emph{spectrum} and denoted by $\sigma(E,A)$. 
For a matrix $A \in \mathbb{K}^{n,m}$ $A^\top, A^H,A^{-H}$, $\rk(A)$ denote the transpose, conjugate transpose inverse of $A^H$ and rank of $A$, respectively.
To indicate that a (real symmetric) Hermitian matrix $M\in\mathbb{K}^{n,n}$ is positive definite, or positive semidefinite, we write $M>0$ and $M\geq 0$, respectively.

It is well-known, see e.g. \cite{BunBMN99,Gan59b,Meh91}, that for regular pairs $(E,A)$ there exist invertible matrices $S,T\in\K^{n\times n}$, $r\in\dN$, nilpotent $N\in\K^{(n-r)\times(n-r)}$ and $A_f\in\K^{r\times r}$ such that 
\begin{align}
\label{eq:weier}
(SET,SAT)=\left(\begin{bmatrix}I_r&0\\0&N
\end{bmatrix}, \begin{bmatrix} A_f&0\\0&I_{n-r}
\end{bmatrix}\right).
%,\quad SB=\begin{bmatrix}
%B_1\\ B_2
%\end{bmatrix}.
\end{align}
%}
If $S$ and $T$ are chosen in such a way that $A_f$ is in (real) Jordan canonical form, then \eqref{eq:weier} is called the \emph{Weierstra\ss\ form} of $(E,A)$ and the \emph{index} $\nu$ of $(E,A)$ is defined as the nilpotency index of $N$. Thus it follows immediately that the spectrum of a regular pair $(E,A)$ is a finite set.

\subsection{Controllability, observability and minimality}
In the following, some controllability and observability notions for descriptor systems are recalled. 
Let $S_\infty(E)$ be a matrix with columns that span the kernel of $E$ and let $T_\infty(E)$ be a matrix with columns that span the kernel of $E^H$, then we introduce the following controllability and observability conditions for %continuous- and 
discrete-time descriptor systems 
%\eqref{cont_DAE}, %respectively 
\eqref{discr_DAE}, see \cite{BunBMN99}
\begin{itemize}
\setlength{\itemindent}{2em}
% \item [(C0)]\   $\rk [ \alpha E - %\beta A,\, B ] = n$
 %       for all  $(\alpha,\beta)\in %\C^2$.
\item [(C1)]\  $\rk\, [ \lambda E - A,\, B ] = n$  for all  $\lambda \in \dC$, \label{gl:C1}
\item [(C2)]\ $\rk\, [ E,\, A S_\infty(E),\, B ] = n$.
\end{itemize} 
A regular system is called 
%{\em completely controllable or C-controllable}, if
%(C0) holds it is called 
{\em behaviorally controllable or  R-controllable},
if (C1) holds, and it is called 
{\em strongly controllable}
%or  S-controllable},
if (C1) and (C2) hold. \label{gl:str_contrl}
Note that the controllability properties (C1) and (C2) can also be defined equivalently in terms of reachable sets, see \cite{BerR13}. 
%Complete controllability ensures 
%that for any given initial and final %states
%$x_0$, $x_f$, respectively, there &exists a control that transfers the %system from $x_0$ to $x_f$ in finite %time, while 

Strong controllability ensures that for any given consistent initial and final states $x_0$, $x_K$  there exists a control sequence $(u_j)_{j=1,\ldots,k}$ that transfers the system from $x_0$ to $x_K$. 
%Regular systems that satisfy
%Condition~(C2) are called {\em  controllable at %infinity\/} or {\em impulse controllable\/}. 

Similarly, we can define dual observability concepts \cite{Sok06, Dai89, BerRT17}
\begin{itemize}
\setlength{\itemindent}{2em}
 %\item [(O0)]\   $\rk [ \alpha E^H - %\beta A^H,\, C^H ] = n$
%        for all  $(\alpha,\beta)\in %\C^2$.
\item [(O1)]\ $\rk\, [ \lambda E^H - A^H,\, C^H ] = n$  for all  $\lambda \in \dC$, \label{gl:O1}
\item [(O2)]\ $\rk\, [ E^H,\, A^H T_\infty(E),\, C^H ] = n$. 
\end{itemize} 
A regular descriptor system is called {\em behaviorally observable or  R-observable} if condition (O1) holds. %It is called \emph{strongly observable}
 %or S-observable} if conditions (O1) and (O2) hold. 
%A regular system that satisfies condition (O2) is called
%{\em observable at infinity or impulse-observable}. 
It is called \emph{strongly observable
 or S-observable} if conditions (O1) and (O2) hold. \label{gl:str_obsv} Note that equivalent definitions of the observability properties (O1) and (O2) in terms of solutions of the systems are given in \cite{BerRT17}.

For standard discrete-time state-space systems
(i.e. for $E=I$) conditions (C1) and (O1) are the classical controllability and observability conditions.

In the following, we recall some properties of transfer functions of regular discrete-time systems \eqref{discr_DAE} with coefficients $(E,A,B,C,D)$. 
The transfer function can be derived using the $z$-transform and is given by 
\begin{equation}\label{tf}
\Tc(z)=C(zE-A)^{-1}B+D
\end{equation}
for all $z\in\dC$ for which the inverse $(zE-A)^{-1}$ exists. %For 

Note that for descriptor systems with index $\nu\geq 2$, the transfer function may have a linear polynomial part and thus may grow unboundedly for $z\rightarrow\infty$. However, if  $\lim_{\vert z\vert\rightarrow \infty} \Tc(z)$ exists then the transfer function is called \emph{proper}. In this case, there exists a realization of the transfer function as a standard-state space system $(I_n,A,B,C,D)$ where $D=\lim_{\vert z\vert\rightarrow \infty} \Tc(z)$ holds.

Given a matrix-valued function $\mathcal{G}:\Omega\rightarrow\dC^{l\times m}$ on some domain $\Omega\subseteq \dC$, where the entries of $\mathcal{G}(z)$ are rational functions in $z$, then we say that a descriptor system \eqref{discr_DAE} with coefficients $(E,A,B,C,D)$ is a \emph{realization} of $\mathcal{G}$, if 
\[
\mathcal{G}(z)=C(zE-A)^{-1}B+D=\Tc(z),\quad z\in \Omega.
\]
Often one is interested in \emph{minimal} realizations of $\mathcal{G}$, i.e.\ realizations where the state-space dimension $n$ is the smallest possible.

For standard discrete-time state-space systems it is well known that the realization $(I_n,A,B,C,D)$ is minimal if and only if the system is controllable and observable, i.e.\ (C1) and (O1) hold, 
see e.g.\  \cite{Wil71}. % and Appendix~\ref{app:co}.

For descriptor systems, besides (C1) and (O1), 
additional conditions are required to achieve minimality, see \cite[Theorem 6.3]{FreJ04}. There it is shown that minimal systems of index at most one are already standard state-space systems. 
%In particular, minimal  completely causal %systems are standard state-space systems.

Minimality conditions for descriptor systems are discussed in \cite{Sok06}, where 
it is shown 
%a realization is called %conditionally minimal if it fulfills the conditions %
that if conditions (C2) and (O2) hold,
%, see also \cite{Dai89}  %in %Appendix~\ref{app:co}. In this case, 
then the order can be reduced and a deflated minimal realization can be constructed by removing the algebraic part of the pair $(E,A)$ and adding this part to the feedthrough matrix $D$.

%\subsection{Causality of discrete-time descriptor %systems} \label{sec:causality}
\subsection{Causality}
Causality is an important property for discrete-time descriptor systems in practice. 
\begin{definition}\label{def:causa}
Let $(E,A)$ be a regular pair of matrices. 
%For a given inhomogeneity sequence $f_k$, 
The discrete-time differential-algebraic system
\begin{equation}\label{dae_discr}
Ex_{k+1}=Ax_k+f_k
\end{equation}
is called \emph{completely causal} if for all inhomogeneities $(f_k)_{k\geq 0}$ and all  $i\geq 1$, $k\geq 0$ the solution $x_k$ 
%given by \eqref{eq:brüll_formula} 
does not explicitly depend on $f_{k+i}$. 
A discrete-time system of the form \eqref{discr_DAE}  is called \emph{input-output causal} if for all consistent input sequences $(u_k)_{k\geq 0}$, $i\geq 1$ and all $k\geq 0$ the output $y_k=Cx_k$ does not explicitly depend on $u_{k+i}$.  
\end{definition}
%By extension, this definition is also valid for %the special case of linear state space systems %where $f_k=B u_k$. \HGcomment{what is meant here?}

\begin{remark}\label{rem:io-causality}{\rm
Clearly, complete causality implies input-output causality but the converse does not hold except if $B,C$ are square and invertible for system \eqref{discr_DAE} where $f_k=Bu_k$. 
Note that input-output causality is known simply as causality in most of the literature, see e.g. \cite{GriG15,Mel16,OppWN96}.
}
\end{remark}

For discrete-time descriptor systems causality is strongly related to the index $\nu$ of the pair $(E,A)$.
\begin{proposition}
\label{prop:causal}
The discrete-time system \eqref{dae_discr} is completely causal if and only if the index $\nu$ is at most one. The descriptor system~\eqref{discr_DAE} is input-output causal if and only if the (transfer) function $z\mapsto D+C(zE-A)^{-1}B$ is proper.
\end{proposition}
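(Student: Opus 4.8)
The plan is to reduce both claims to the Weierstra\ss\ form \eqref{eq:weier} and to read off the dependence on future data directly from the explicit solution of the nilpotent block.

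For the first statement, I would pick invertible $S,T$ transforming $(E,A)$ into \eqref{eq:weier} and split the transformed state $T^{-1}x_k=[(x_k^{(1)})^\top,(x_k^{(2)})^\top]^\top$ and inhomogeneity $Sf_k=[(g_k^{(1)})^\top,(g_k^{(2)})^\top]^\top$ compatibly with the block sizes $r$ and $n-r$. Multiplying \eqref{dae_discr} by $S$ then decouples the system into the ordinary recursion $x_{k+1}^{(1)}=A_f x_k^{(1)}+g_k^{(1)}$, whose solution depends only on $x_0^{(1)}$ and past data, and the nilpotent block $N x_{k+1}^{(2)}=x_k^{(2)}+g_k^{(2)}$. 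Solving the latter for $x_k^{(2)}$ and iterating forward $\nu$ times yields
\begin{equation*}
x_k^{(2)}=N^{\nu}x_{k+\nu}^{(2)}-\sum_{i=0}^{\nu-1}N^i g_{k+i}^{(2)}=-\sum_{i=0}^{\nu-1}N^i g_{k+i}^{(2)},
\end{equation*}
where I used $N^{\nu}=0$. If $\nu\leq 1$ then $N=0$ and $x_k^{(2)}=-g_k^{(2)}$ depends only on $f_k$, so the system is completely causal; conversely, if $\nu\geq 2$ then $N\neq 0$, so the coefficient $-N$ of $g_{k+1}^{(2)}$ is nonzero, and since $S$ is invertible the solution depends explicitly on $f_{k+1}$. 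This establishes the equivalence with $\nu\leq 1$.

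For the second statement I would use the same block decomposition to compute the transfer function \eqref{tf}. From \eqref{eq:weier} one obtains $(zE-A)^{-1}=T\,\diag((zI_r-A_f)^{-1},(zN-I_{n-r})^{-1})\,S$, and the Neumann series $(zN-I_{n-r})^{-1}=-\sum_{j=0}^{\nu-1}z^jN^j$ terminates because $N$ is nilpotent. Writing $CT=[C_1,C_2]$ and $SB=[B_1^\top,B_2^\top]^\top$, this gives
\begin{equation*}
\Tc(z)=D+C_1(zI_r-A_f)^{-1}B_1-\sum_{j=0}^{\nu-1}z^j C_2 N^j B_2,
\end{equation*}
in which the first dynamic term is strictly proper. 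Hence $\Tc$ is proper if and only if the polynomial part of degree at least one vanishes, i.e.\ $C_2 N^j B_2=0$ for $j=1,\dots,\nu-1$. On the other hand, applying the first part with $f_k=Bu_k$ (so $g_k^{(2)}=B_2u_k$) gives $Cx_k=C_1x_k^{(1)}+C_2x_k^{(2)}$ with $C_2x_k^{(2)}=-\sum_{i=0}^{\nu-1}C_2N^iB_2\,u_{k+i}$; as $x_k^{(1)}$ never involves future inputs, the output $Cx_k$ is independent of $u_{k+i}$ for all $i\geq 1$ exactly when $C_2N^iB_2=0$ for $i=1,\dots,\nu-1$. These two conditions coincide, giving the claimed equivalence.

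The block multiplications and the termination of the Neumann series are routine; the only point requiring care is the second equivalence, where the subtlety is that the state $x_k^{(2)}$ may genuinely depend on future inputs even when the output $Cx_k$ does not, precisely when the products $C_2N^jB_2$ happen to vanish. This is why input-output causality is strictly weaker than complete causality and is governed by properness of $\Tc$ rather than by the index itself, in agreement with Remark~\ref{rem:io-causality}.
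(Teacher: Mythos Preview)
Your proposal is correct and follows essentially the same route as the paper: both reduce to the Weierstra\ss\ form, solve the nilpotent block explicitly, and use the terminating Neumann series $(zN-I)^{-1}=-\sum_{j=0}^{\nu-1}z^jN^j$ to identify the polynomial part of $\Tc$. The only cosmetic difference is that the paper first quotes the Drazin-inverse solution formula \eqref{eq:brüll_formula} for the ``if'' direction of the first claim before passing to Weierstra\ss\ form for the converse, whereas you work directly in Weierstra\ss\ coordinates throughout---a slightly more self-contained but equivalent argument.
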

\begin{proof}For completeness, the proof of Proposition~\ref{prop:causal} is given in Section~\ref{sec:solvability}.
\end{proof}

The discussed causality analysis  motivates why we restrict ourselves in the following to discrete-time descriptor systems with index $\nu\leq1$. Then we can employ the singular value decomposition of $E$, see e.g. \cite{GolV96}, which determines unitary matrices $U,V\in\K^{n\times n}$ and $\Sigma_E\in\K^{r\times r}$ diagonal and positive definite  such that  
\begin{align}
\label{eq:semi_expl}
(UEV,UAV)=\left(\begin{bmatrix}
\Sigma_E&0\\0&0
\end{bmatrix},\begin{bmatrix}
A_{11}&A_{12}\\A_{21}&A_{22}
\end{bmatrix}\right),
\end{align}
for some $A_{11}\in\K^{r\times r}$, $A_{12},A_{21}^\top\in\K^{r\times (n-r)}$, and invertible $A_{22}\in\K^{(n-r)\times (n-r)}$. 

Compared to the Weierstra\ss\ canonical form \eqref{eq:weier} this structure can be obtained in a numerically stable way. The transformed discrete-time descriptor system then has the form
\begin{align}\label{semiexplicit}
\begin{split}
\begin{bmatrix}
\Sigma_E&0\\0&0
\end{bmatrix}\begin{bmatrix}x_{k+1}^1\\x_{k+1}^2\end{bmatrix}&=\begin{bmatrix}
A_{11}&A_{12}\\A_{21}&A_{22}
\end{bmatrix}\begin{bmatrix}x_{k}^1\\x_{k}^2\end{bmatrix}+\begin{bmatrix}
B_1\\ B_2
\end{bmatrix}u_k,\\ y_k&=[C_1,C_2]\begin{bmatrix}x_{k}^1\\x_{k}^2\end{bmatrix} +D u_k,\quad \begin{bmatrix}x_{k}^1\\
x_{k}^2\end{bmatrix}=V^Hx_k.
\end{split}
\end{align}
If in this system the first equation is not void, %i.e. %r>0$
%cannot be written with only algebraic equations 
then theoretically it can always be reduced to a standard state-space system by resolving the algebraic equation for $x_k^2$ which leads to
\begin{align*}
\Sigma_E x_{k+1}^1&=(A_{11}-A_{12}A_{22}^{-1}A_{21})x_k^1+(B_1-A_{12}A_{22}^{-1}B_2)u_k,\\
y_k&=(C_1-C_2A_{22}^{-1}A_{21})x_{k}^1 +(D-C_2A_{22}^{-1}B_2) u_k.
\end{align*}
Hence, using the modified state $\widehat x_{k}:=\Sigma_E^{\tfrac{1}{2}}x_k^1$ , this leads to the standard discrete-time state-space system 
\begin{align}
\label{eq:indexone_ODE}
\begin{split}
 \widehat x_{k+1}&=\underbrace{\Sigma_E^{-\tfrac{1}{2}}(A_{11}-A_{12}A_{22}^{-1}A_{21})\Sigma_E^{-\tfrac{1}{2}}}_{=:\mathcal{A}}\widehat x_k+\underbrace{\Sigma_E^{-\tfrac{1}{2}}(B_1-A_{12}A_{22}^{-1}B_2)}_{=:\mathcal{B}}u_k,\\
y_k&=\underbrace{(C_1-C_2A_{22}^{-1}A_{21})\Sigma_E^{-\tfrac{1}{2}}}_{=:\mathcal{C}}\widehat x_{k}+\underbrace{(D-C_2A_{22}^{-1}B_2)}_{=:\mathcal{D}}u_k,
\end{split}
\end{align}
together with an algebraic equation 
\[
x_k^2=-A_{12} A_{22}^{-1} x_k^1-B_2 u_k
\]
that defines the relation between input sequences $(u_k)_{k\geq 0}$ and associated consistent initial conditions. Note, however, that in practice and when using numerical control methods, it is better to work with the formulation \eqref{semiexplicit}, since a potential ill-conditioning of $A_{22}$ would lead to very large errors in the reduced representation.

%\subsection{Basic assumptions}
%Regularity, the index and minimality of discrete-time %descriptor systems}
%\label{sec:regul}
%In this section we will present assumptions on the %
%regularity, minimality and the index of discrete-time 
%descriptor systems.
%., to justify 

The following assumption is used throughout this manuscript.
%\HGcomment{I haven't included the assumption %in all of the results yet}
\begin{assumption}
\label{Assumpt}
    The discrete-time descriptor system \eqref{discr_DAE} with coefficients $(E,A,B,C,D)$ is assumed to be regular, $E\neq 0$, and completely causal, i.e.\ $(E,A)$ has index at most one. 
\end{assumption}
%\begin{equation}\label{Assumpt}
%\fbox{\bf $\begin{matrix}
%\text{The discrete-time descriptor system \eqref{discr_DAE} with coefficients} \\ \text{  $(E,A,B,C,D)$ is assumed to be regular, completely causal,} \\ \text{and has $E\neq 0$.} \end{matrix}$}
%\end{equation}
% 

It is well-known, see \cite{BunBMN99}, that for general descriptor systems under  the presented  controllability and observability assumptions, a general discrete-time descriptor system can be transformed via state or output feedback as well as differentiation of uncontrolled parts to satisfy this assumption. However, these regularizing feedbacks in general do not preserve the passivity of the system and therefore we restrict ourselves to systems that satisfy Assumption~\ref{Assumpt}.

After recalling some basic results on discrete-time descriptor systems, in the next section, we derive characterizations of dissipative systems.

\section{Dissipative systems and their characterizations}
\label{sec:Dissipation}
Motivated by \cite{CheGH22}, we consider in this section the relationship between passivity, solutions of the Kalman-Yakubovich-Popov (KYP) inequalities, and  positive and bounded realness for discrete-time systems. After that,
%Then in the next section, 
we  introduce pH systems for discrete-time systems only in terms of coefficients without the use of transformation or discretization. 

%\HGcomment{Motivated by the continuous-time paper, we consider in this section..., make definition of pH only in terms of the coefficients and not ito transformation or discretization, refer to Chapter 4}
We start by investigating the dissipativity of \eqref{discr_DAE} with respect to a given \emph{supply rate} $s:\K^m\times \K^m\rightarrow \dR$ that is mapping pairs $(u,y)\in\K^m\times\K^m$ of input and output variables of \eqref{discr_DAE} to real numbers. We consider different characterizations of dissipativity along with the relation between the different supply rates.

For linear systems, one typically uses quadratic supply rates of the form
\begin{align}
\label{def:supply}
s(y,u):=\begin{bmatrix}
y\\ u \end{bmatrix}^H\begin{bmatrix} Q & S\\S^H& R\end{bmatrix}\begin{bmatrix}
y\\ u
\end{bmatrix},\quad Q,R,S\in\dC^{m\times m},\quad Q=Q^H,\quad R=R^H.
\end{align}
Of special interest are the 
\emph{impedance supply rate}
given by 
\begin{align}\label{impsupply}
s_{imp}(y,u):= \begin{bmatrix}
y\\ u \end{bmatrix}^H\begin{bmatrix} 0 &  I_m\\ I_m & 0\end{bmatrix}\begin{bmatrix}
y\\ u
\end{bmatrix}= 2\Re(y^Hu).
\end{align}
and the \emph{scattering supply rate} given by
\begin{align}\label{scatsupply}
s_{sca}(y,u):=\begin{bmatrix}
y\\ u \end{bmatrix}^H\begin{bmatrix} -I_m & 0\\0& I_m\end{bmatrix}\begin{bmatrix}
y\\ u
\end{bmatrix}=\|u\|^2-\|y\|^2.
\end{align}
The following notion of discrete-time dissipative descriptor systems was used for standard state-space systems in \cite[Appendix C]{GooS84}, see also \cite{Sta03}, and can be viewed as a discrete-time analog of the classic definition for continuous-time systems \cite{Wil72}.
\begin{definition}\label{def:disssyst}
A discrete-time descriptor system of the form \eqref{discr_DAE} with coefficients $(E,A,B,C,D)$  is called \emph{dissipative} with respect to the supply rate $s$ given by \eqref{def:supply}, if there exists a nonnegative function $V:\K^n\rightarrow[0,\infty)$ satisfying $V(0)=0$ and the \emph{dissipation inequality}
\begin{align}
\label{eq:dissip}
V(Ex_{k+1})-V(Ex_k)\leq s(u_k,y_k) \quad \text{for all $k\geq 0$}
\end{align}
and all consistent combinations of $u_k\in\K^m$ and $x_0\in\K^n$. Furthermore, such functions $V$ are called \emph{storage functions}. The system is called \emph{strictly dissipative} if \eqref{eq:dissip} holds with strict inequality for all consistent $x_0\neq 0$ and $u_k$ and the resulting $y_k$, $k\geq 0$. Moreover, the system is called \emph{conservative} if \eqref{eq:dissip} holds with equality.
\end{definition}
Note that for standard state-space systems, i.e. for $E=I$, we obtain the classic definition from \cite{GooS84}. 

If the system is dissipative with respect to $s_{imp}$, then the system is called \emph{impedance passive} and  if the system is dissipative with respect to $s_{sca}$, then the system is called \emph{scattering passive}.

For quadratic supply rates, it is common to consider also quadratic storage functions
\begin{align}
    \label{def:quad_store}
V:\K^n\rightarrow[0,\infty),\quad  V(x):=x^HXx\quad \text{for some $X\in\K^{n\times n}$ with  $X=X^H\geq 0$.}
\end{align}
For this case, we introduce the following two conditions for  impedance and scattering passivity.
\begin{itemize}

\item[] \hspace{-0.5cm} {\rm (d-iPa)}~
There exists a storage function $V$ of the form \eqref{def:quad_store} satisfying 
\begin{equation}
V(Ex_{k+1})-V(Ex_k)\leq s_{imp}(u_k,y_k)=2\Re (u_k^Hy_k)\quad \text{for all $k\geq 0$.}   \label{eq:diPA}
\end{equation}
\item[]\hspace{-0.5cm}  {\rm (d-sPa)}~
There exists a storage function $V$ of the form \eqref{def:quad_store} satisfying
\begin{equation}
 V(Ex_{k+1})-V(Ex_k)\leq s_{sca}(u_k,y_k)=\|u_k\|^2-\|y_k\|^2\quad \text{for all $k\geq 0$.}   \label{eq:dsPA}
\end{equation}
\end{itemize}

After extending the classical passivity notions to discrete-time descriptor systems, in the next section, we will characterize these properties via linear KYP type matrix inequalities (LMIs).

\subsection{Characterization of dissipativity via LMIs}

The discussed passivity notions lead to the following variants of the KYP inequalities. Different generalizations of KYP inequalities for discrete-time descriptor systems were introduced in \cite{BanV19}.

For the impedance passive supply rate, we consider the LMI for matrices $ X=X^H\in\K^{n\times n}$
\begin{align}
\label{eq:diKYP}
\begin{split}
&   \hspace{-1.5cm} \text{(d-iKYP)}~\ \mbox {there exists}~X=X^H\geq 0~\mbox{s.t.}\\ & ~~~~\begin{bmatrix}
-A^HXA+E^HXE&C^H-A^HXB\\C-B^HXA&D+D^H-B^HXB
\end{bmatrix}\geq 0. \hspace{1.3cm}
\end{split}
\end{align}
For standard state-space systems, i.e. $E=I$, this inequality coincides with the KYP inequality considered in \cite{AndH69,XiaH99}.  Note also that for $D=0$ in this LMI we need to have that $B^HXB=0$.

For the scattering passive supply rate, we consider the LMI for $X=X^H\in\K^{n\times n} $
\begin{align}
\label{eq:dsKYP}
\begin{split}
&   \hspace{-0.7cm} \text{(d-sKYP)}~\ \mbox {there exists}~ X=X^H\geq 0~\mbox{s.t.}\\ & \hspace{1.3cm}\begin{bmatrix}
-A^HXA+E^HXE-C^HC&-A^HXB-C^HD\\-D^HC-B^HXA& I-D^HD- B^HXB
\end{bmatrix}\geq 0. \hspace{0cm}
\end{split}
\end{align}

For standard state-space systems, i.e. $E=I$, this inequality coincides with the KYP inequality considered in \cite{Vai1985,XiaH99}, see also \cite[Section 5.10.2]{BroLME20}. 

In the following proposition it is shown that the two KYP LMIs characterize passivity.
\begin{proposition}
\label{prop:KYPthenPa}
If \emph{(d-iKYP)} (respectively \emph{(d-sKYP)}) holds for some $X=X^H\geq 0$ then \emph{(d-iPa)} (respectively \emph{(d-sPa)}) holds for some $X=X^H\geq 0$.
\end{proposition}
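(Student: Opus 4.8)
The plan is to reuse the very matrix $X$ furnished by the KYP inequality as the Gram matrix of a quadratic storage function, and then to show by direct substitution that the associated dissipation inequality \emph{is} the KYP matrix inequality evaluated along the stacked state-input vector. Concretely, given $X=X^H\geq 0$ satisfying (d-iKYP) (respectively (d-sKYP)), I would set $V(x):=x^HXx$ as in \eqref{def:quad_store}. Since $X\geq 0$ this $V$ is nonnegative and satisfies $V(0)=0$, so it is an admissible candidate storage function in the sense of Definition~\ref{def:disssyst}, and it has the quadratic form required by (d-iPa)/(d-sPa).

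The crucial structural point is that the dissipation inequality is phrased in terms of $V(Ex_{k+1})$ rather than $V(x_{k+1})$, so no inversion of the possibly singular matrix $E$ is ever needed. Using the dynamics $Ex_{k+1}=Ax_k+Bu_k$ from \eqref{discr_DAE}, I would compute
\[
V(Ex_{k+1})-V(Ex_k)=(Ax_k+Bu_k)^HX(Ax_k+Bu_k)-x_k^HE^HXEx_k,
\]
and expand the right-hand side as a Hermitian quadratic form in the stacked vector $\left[\begin{smallmatrix}x_k\\ u_k\end{smallmatrix}\right]$. For the impedance case I would then insert $y_k=Cx_k+Du_k$ into $s_{imp}(u_k,y_k)=2\re(u_k^Hy_k)$, collect all terms, and observe that the inequality $V(Ex_{k+1})-V(Ex_k)\leq s_{imp}(u_k,y_k)$ rearranges precisely into
\[
\begin{bmatrix}x_k\\ u_k\end{bmatrix}^H\begin{bmatrix}-A^HXA+E^HXE&C^H-A^HXB\\ C-B^HXA&D+D^H-B^HXB\end{bmatrix}\begin{bmatrix}x_k\\ u_k\end{bmatrix}\geq 0,
\]
whose coefficient matrix is exactly the one appearing in (d-iKYP). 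The scattering case is entirely analogous: inserting $y_k$ into $s_{sca}(u_k,y_k)=\|u_k\|^2-\|y_k\|^2$ produces the extra contributions $-C^HC$, $-C^HD$, $-D^HC$ and $I-D^HD$, which get absorbed into the corresponding blocks to yield the matrix of (d-sKYP).

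Since the respective KYP matrix is assumed positive semidefinite, the displayed quadratic form is nonnegative for \emph{every} $\left[\begin{smallmatrix}x\\ u\end{smallmatrix}\right]\in\K^{n+m}$, and in particular for every consistent pair $(x_k,u_k)$ arising along a trajectory of \eqref{discr_DAE}. Hence \eqref{eq:diPA} (respectively \eqref{eq:dsPA}) holds with the same $X$, which establishes (d-iPa) (respectively (d-sPa)). I do not expect a genuine obstacle here: the argument is a direct algebraic identification, and the only points requiring care are the bookkeeping of the off-diagonal blocks and the use of $V(Ex_{k+1})$ in place of $V(x_{k+1})$. The latter is exactly what lets the computation close for singular $E$ without invoking any invertibility of $E$ or restricting to the consistency manifold, since positive semidefiniteness of the KYP matrix is in fact stronger than the pointwise dissipation inequality demanded along consistent trajectories.
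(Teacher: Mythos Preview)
Your proposal is correct and follows essentially the same approach as the paper: you take $V(x)=x^HXx$ with the $X$ from the KYP inequality, use $Ex_{k+1}=Ax_k+Bu_k$ to rewrite $V(Ex_{k+1})-V(Ex_k)$, and identify the resulting quadratic form in $\begin{smallbmatrix}x_k\\u_k\end{smallbmatrix}$ with the KYP matrix, which is exactly what the paper does in \eqref{Pa_to_imp_KYP} and \eqref{skypproof}.
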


\begin{proof}
    The proof of Proposition~\ref{prop:KYPthenPa} is presented in Section~\ref{app:KYPthenPa}.
\end{proof}

The following example illustrates the difficulties that arise if Assumption~\ref{Assumpt} does not hold.
\begin{example}
Consider $E=\begin{bmatrix}0&1\\0&0\end{bmatrix}$ and $A=\begin{bmatrix}
1&0\\0&1
\end{bmatrix}$ so that the associated discrete-time descriptor system~\eqref{dae_discr} has index two. Then (d-iKYP) has no solution $0<X=X^H=\begin{bmatrix}
x_{11}&x_{12}\\ x_{21}& x_{22}
\end{bmatrix}$. If this were the case, then 
\[
0\leq -A^HXA+E^HXE=\begin{bmatrix}
-x_{11}&-x_{12}\\-x_{12} &x_{11}-x_{22}
\end{bmatrix}.
\]
Hence $x_{11}=0$ which implies $x_{12}=0$ and thus $x_{22}=0$. Consequently, $X=0$ and therefore also $C=0$ holds. 

On the other hand, if we define $C=\begin{bmatrix}
0&1
\end{bmatrix}$, $D=0$, then using \eqref{dae_discr}, we find $y_k=Cx_k=0$ and therefore the system is passive with the trivial storage function $V=0$. However, as we have seen, the system does not fulfill (d-iKYP). 
\end{example}

From passivity with respect to a certain supply rate one can only conclude that the corresponding KYPs hold on the system space, see \cite{CheGH22} for the continuous-time case. The solvability of such KYP inequalities restricted to subspaces was studied in \cite{CamF09,ReiRV15,ReiS10} for continuous-time systems and in \cite{BanV19} for discrete-time systems.

\begin{corollary}\label{cor: pakyp}
Consider a discrete-time descriptor system in semi-explicit, index-one form \eqref{eq:semi_expl} and consider 
\begin{align*}
\Ac&:=\Sigma_E^{-\tfrac{1}{2}}(A_{11}-A_{12}A_{22}^{-1}A_{21})\Sigma_E^{-\tfrac{1}{2}},&\Bc&:=\Sigma_E^{-\tfrac{1}{2}}(B_1-A_{12}A_{22}^{-1}B_2),\\ \Cc&:=(C_1-C_2A_{22}^{-1}A_{21})\Sigma_E^{-\tfrac{1}{2}},& \Dc&:=D-C_2A_{22}^{-1}B_2.
\end{align*}
Then \emph{(d-iPa)} holds if and only if there exists a solution $0\leq X_1=X_1^H$ for \emph{(d-iKYP)} associated with  the standard state-space system $(I_n,\Ac,\Bc,\Cc,\Dc)$.
%the following KYP inequality has a solution
%\begin{align}
  %  \label{eq:KYP_indexone}
%\begin{bmatrix}
%- \Ac^H\mathcal{Q}\Ac+\Sigma_E\mathcal{Q}\Sigma_E& \Cc^H - \Ac^H\mathcal{Q}\Bc \\
%\Cc-\Bc^H\mathcal{Q}\Ac
%&\Dc+\Dc^H-\Bc \mathcal{Q}\Bc
%\end{bmatrix}\geq 0,\quad \mathcal{Q}=\mathcal{Q}^H\geq 0.
%\end{align}

Furthermore, \emph{(d-sPa)} holds if and only if \emph{(d-sKYP)}, for the standard state space system $(I_n,\Ac,\Bc,\Cc,\Dc)$, has a solution $\mathcal{X}\geq 0$.
%the following LMI has a solution
%\begin{align}
  %  \label{eq:KYP_indexone_BR}
%\begin{bmatrix}
%- \Ac^H\mathcal{Q}\Ac+\mathcal{Q}-\Cc^H\Cc& -\Cc^H\Dc - \Ac^H\mathcal{Q}\Bc \\
%\Cc-\Bc^H\mathcal{Q}\Ac
%&I_m-\Dc^H\Dc-\Bc \mathcal{Q}\Bc
%\end{bmatrix}\geq 0,\quad \mathcal{Q}=\mathcal{Q}^H\geq 0
%\end{align}
\end{corollary}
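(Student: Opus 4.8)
The plan is to reduce both equivalences to the standard-system case via the reduction \eqref{eq:indexone_ODE}, exploiting a one-to-one correspondence between consistent trajectories of the semi-explicit system \eqref{eq:semi_expl} and trajectories of the standard state-space system $(I_r,\Ac,\Bc,\Cc,\Dc)$. I would treat the scattering statement in detail; the impedance statement follows verbatim upon replacing $s_{sca}$ by $s_{imp}$ and \emph{(d-sKYP)} by \emph{(d-iKYP)}.

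First I would record the trajectory correspondence. Resolving the algebraic equation as $x_k^2=-A_{22}^{-1}A_{21}x_k^1-A_{22}^{-1}B_2u_k$ and inserting it into the dynamics and output equations shows that, for every consistent pair $(x_0,(u_k)_{k\geq0})$, the signals $\widehat x_k:=\Sigma_E^{1/2}x_k^1$ and $u_k$ satisfy $\widehat x_{k+1}=\Ac\widehat x_k+\Bc u_k$ and $y_k=\Cc\widehat x_k+\Dc u_k$, and conversely every trajectory of $(I_r,\Ac,\Bc,\Cc,\Dc)$ arises from a unique consistent descriptor trajectory. Since $\widehat x_0$ and the input sequence are otherwise unconstrained on both sides, the two families of admissible data are in bijection with matching outputs.

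The key observation is that the descriptor storage function depends only on the leading block of $X$. Partitioning $X=\begin{bmatrix}X_{11}&X_{12}\\X_{12}^H&X_{22}\end{bmatrix}$ conformally with \eqref{eq:semi_expl} and using $Ex_k=\begin{bmatrix}\Sigma_E x_k^1\\0\end{bmatrix}$ gives $V(Ex_k)=(x_k^1)^H\Sigma_E X_{11}\Sigma_E x_k^1=\widehat x_k^H\mathcal{X}_1\widehat x_k$ with $\mathcal{X}_1:=\Sigma_E^{1/2}X_{11}\Sigma_E^{1/2}$. Thus the dissipation inequality \eqref{eq:dsPA} for the descriptor system is exactly the scattering dissipation inequality for $(I_r,\Ac,\Bc,\Cc,\Dc)$ with storage matrix $\mathcal{X}_1$. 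Because $\Sigma_E>0$ we have $\mathcal{X}_1\geq0\iff X_{11}\geq0$; moreover, any $X_{11}\geq0$ extends to a positive semidefinite $X=\diag(X_{11},0)$, while conversely $X\geq0$ forces $X_{11}\geq0$ as a principal submatrix. Combined with the trajectory bijection, this establishes that \emph{(d-sPa)} for the descriptor system holds if and only if \emph{(d-sPa)} holds for $(I_r,\Ac,\Bc,\Cc,\Dc)$.

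It then remains to match \emph{(d-sPa)} and \emph{(d-sKYP)} for the \emph{standard} system. Substituting $\widehat x_{k+1}=\Ac\widehat x_k+\Bc u_k$ and $y_k=\Cc\widehat x_k+\Dc u_k$ into \eqref{eq:dsPA} and collecting terms rewrites the inequality as $\begin{bmatrix}\widehat x_k\\u_k\end{bmatrix}^H M(\mathcal{X}_1)\begin{bmatrix}\widehat x_k\\u_k\end{bmatrix}\geq0$, where $M(\mathcal{X}_1)$ is precisely the matrix appearing in \emph{(d-sKYP)} with $E=I_r$, $A=\Ac$, $B=\Bc$, $C=\Cc$, $D=\Dc$, $X=\mathcal{X}_1$. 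For a standard state-space system every pair $(\widehat x_k,u_k)$ is admissible, so this quadratic form is nonnegative for all arguments exactly when $M(\mathcal{X}_1)\geq0$, i.e.\ when $\mathcal{X}_1$ solves \emph{(d-sKYP)}; the sufficiency of the LMI is also the content of Proposition~\ref{prop:KYPthenPa}. Chaining the two equivalences proves the claim, and the impedance case is identical. The step I expect to require the most care is the consistency bookkeeping of the reduction together with the observation that $V(Ex_k)$ probes only $X_{11}$, as this is what permits freely completing $X_{11}\geq0$ to a positive semidefinite $X$ and thereby passing between the descriptor LMI on the full $X$ and the standard-system LMI on $\mathcal{X}_1$.
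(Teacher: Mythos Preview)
Your proposal is correct and follows essentially the same approach as the paper: both exploit that consistent initial data are parametrized by $(\widehat x_0,u_0)$ through the algebraic constraint, that $V(Ex_k)$ sees only the block $X_{11}$ via $\mathcal{X}_1=\Sigma_E^{1/2}X_{11}\Sigma_E^{1/2}$, and that the converse direction is handled by extending $X_{11}\geq 0$ to $X=\diag(X_{11},0)$. The paper packages these observations as an explicit congruence $S^H M S$ of the descriptor KYP matrix with the parametrization map $S$, whereas you phrase the same computation at the storage-function/trajectory level and then invoke the standard-system equivalence of (d-sPa) and (d-sKYP); the content is the same.
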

\begin{proof}
    The proof of Corollary~\ref{cor: pakyp} is presented in Section~\ref{app:pakyp}. 
\end{proof}

%\HGcomment{Add passive index 2 examples such KYP is not solvable}
%We study the dissipativity of these systems with respect to given supply rates \textbf{see Volkers definition}.
%We could study the characterization via available storage in this general setting and then later go to the special case of passive systems.
%\HGcomment{I think we have to distinguish between space and time (or space time?) discretization methods}
%, however we will also consider the DAEs where $E$ is singular to allow for a wider class of discretization schemes.

In this subsection, we have presented characterizations for scattering and impedance passive systems using KYP type LMIs.

In the next section, we show the relationship to bounded and positive real systems.

\subsection{Characterization of dissipativity via transfer functions}\label{sec:transfer}
In this subsection, we consider discrete-time systems of the form \eqref{discr_DAE} with coefficients $(E,A,B,C,D)$ and their transfer function \eqref{tf}.
%which can be derived from the $z$-transform and is %given by 
%
%\[
%\Tc(z)=C(zE-A)^{-1}B+D
%\]
%
%for all $z$ for which the inverse $(zE-A)^{-1}$ exists. %For $z\neq 0$ we can use the equivalent representation $T(z)=Cz(E-zA)^{-1}B+D$.
%The results on minimality and realization of index at most two can be formulated as well in the discrete-time case.
Consider the following classical definition, see \cite{AndH69,XiaH99}.
\begin{definition}\label{def:bdposreal}
A rational function $\Tc:\Omega\rightarrow\dC^{m\times m}$ on some domain $\Omega\subset\dC$ is called \emph{positive real} if
\begin{itemize}
\item[] \hspace{-0.5cm} {\rm (d-PR)}~ $\Tc(\cdot)$ has analytic entries and $\Tc(z)+\Tc(z)^H\geq 0$ for all $z\in\dC$ with $\vert z\vert>1$,
\end{itemize}
and it is called \emph{bounded real}, see \cite{And67},  if 
\begin{itemize}
\item[] \hspace{-0.5cm} {\rm (d-BR)}~ $\Tc(\cdot)$ has analytic entries and $I_m-\Tc(z)^H\Tc(z)\geq 0$ for all $z\in\dC$ with $\vert z\vert>1$.
\end{itemize}
\end{definition}
In \cite{AndH69} it is shown that for minimal standard discrete-time state-space systems (d-PR) and (d-iKYP) are equivalent. Another related definition of bounded realness was studied in  \cite[Section 5.10.2]{BroLME20}. 

If $\Tc$ is the transfer function of an impedance passive system then one has the following relation between inputs and outputs, see also Section~\ref{sec: externalcayley}, 
\[
\tfrac{1}{\sqrt{2}}(e-f)=y=\Tc(z)u=\Tc(z)\tfrac{1}{\sqrt{2}}(e+f),\quad f=(I_n+\Tc(z))^{-1}(I_n-\Tc(z))e.
\]
Since $\Tc(z)+\Tc(z)^H\geq 0$ for all $z\in\dC$ with $\vert z\vert>1$ we have that $I_n+\Tc(z)$ is invertible. Furthermore, one can show that $(I_n+\Tc(z))^{-1}(I_n-\Tc(z))$ is bounded real \cite[Theorem 2.3]{ReiS10}  using the additional assumption that $I_m+D$ is invertible. Conversely, it was also shown in \cite[Theorem 2.3]{ReiS10} that for a bounded real transfer function $\Tc$ such that $I_m+\Tc(z)$ is invertible for all $\vert z\vert>1$ then the transformed transfer function is positive real. 

The following lemma is an extension of \cite[Lemma 3.2]{BanV19}.
\begin{lemma}
\label{lem:KYP_without_supply}
Consider a regular discrete-time descriptor system of the form \eqref{discr_DAE}.
%satisfying Assumption~\ref{Assumpt}. 
Let $X=X^H\in\dC^{n\times n}$ then for all $z\in\dC\setminus\sigma(E,A)$ the following identity holds 
 \begin{align*}
 &~~~~\begin{bmatrix}
    (zE-A)^{-1}B\\ I_m
    \end{bmatrix}^H\begin{bmatrix}
    -A^HXA+E^HXE& -A^HXB\\ -B^HXA &-B^HQB
    \end{bmatrix}\begin{bmatrix}
    (zE-A)^{-1}B\\ I_m
    \end{bmatrix}\\&=(1-\vert z\vert^2)B^H(zE-A)^{-H}E^HXE(zE-A)^{-1}B.
 \end{align*}
\end{lemma}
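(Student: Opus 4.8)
The plan is to verify the matrix identity by direct computation, exploiting the fact that both sides are scalar-valued quadratic expressions in the resolvent $(zE-A)^{-1}B$. First I would abbreviate $R(z):=(zE-A)^{-1}B$, so the left-hand side becomes
\begin{align*}
\mathrm{LHS}&=R(z)^H(-A^HXA+E^HXE)R(z)-R(z)^HA^HXB-B^HXAR(z)-B^HXB,
\end{align*}
where I note that the bottom-right block should read $-B^HXB$ (the symbol $Q$ in the statement must coincide with $X$ for the identity to be dimensionally and algebraically consistent). The key structural observation is that $(zE-A)R(z)=B$, equivalently $zER(z)-AR(z)=B$, which lets me eliminate the bare $B$ terms in favor of $ER(z)$ and $AR(z)$.

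The central step is to substitute $B=zER(z)-AR(z)$ and its conjugate transpose $B^H=\bar z R(z)^HE^H-R(z)^HA^H$ into every occurrence of $B$ on the left-hand side, so that the entire expression is rewritten purely in terms of $R(z)$, $A$, $E$, and $X$. After this substitution each of the four terms expands into products of the form $R(z)^H(\cdot)R(z)$ with middle factors built from $A^HXA$, $A^HXE$, $E^HXA$, and $E^HXE$ scaled by $1$, $z$, $\bar z$, or $|z|^2$. I then collect like terms. I expect the $A^HXA$ contributions and the mixed $A^HXE$, $E^HXA$ contributions to cancel completely, leaving only the $E^HXE$ term multiplied by the scalar factor $1-|z|^2$, which is exactly the claimed right-hand side $(1-|z|^2)R(z)^HE^HXER(z)$.

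The main obstacle is purely bookkeeping: tracking the scalar coefficients $z$ and $\bar z$ correctly through the four cross terms and confirming that the non-$E^HXE$ pieces telescope to zero. Concretely, the $-B^HXB$ term produces $-|z|^2R^HE^HXER+\bar z R^HE^HXAR+z R^HA^HXER-R^HA^HXAR$, and the two cross terms $-R^HA^HXB-B^HXAR$ produce $-z R^HA^HXER+R^HA^HXAR$ and $-\bar z R^HE^HXAR+R^HA^HXAR$ respectively; adding these to the leading block $R^H(-A^HXA+E^HXE)R$ cancels all $A^HX A$, $A^HXE$, and $E^HXA$ terms and leaves $(1-|z|^2)R^HE^HXER$. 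Since this is an exact algebraic identity valid for every $z\in\dC\setminus\sigma(E,A)$ (where $R(z)$ is well-defined) and every Hermitian $X$, no analyticity or passivity hypotheses are needed, and the Hermitian symmetry of $X$ is what guarantees the cross terms pair up correctly.
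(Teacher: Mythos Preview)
Your proposal is correct and follows essentially the same approach as the paper: both arguments are direct computations driven by the resolvent identity $(zE-A)R(z)=B$, the paper exploiting it via a staged add-and-subtract of $zE$ inside the block matrix while you substitute $B=zER(z)-AR(z)$ throughout and collect terms. (Minor aside: Hermitian symmetry of $X$ is not actually needed for the cancellations---the identity holds for arbitrary $X$---and you are right that the $Q$ in the lower-right block is a typo for $X$.)
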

\begin{proof}
    The proof of Lemma~\ref{lem:KYP_without_supply} is presented in Section~\ref{app:KYP_without_supply}. 
\end{proof}

We then have the following relation between impedance (scattering) passivity and positive (bounded) realness.
 \begin{proposition}\label{prop: brpa} Consider a discrete-time descriptor system of the form \eqref{discr_DAE} with coefficients $(E,A,B,C,D)$ satisfying Assumption~\ref{Assumpt}. If {\rm (d-iPa)} (resp.\ {\rm (d-sPa)}) holds, then the system is positive real (resp.\ bounded real).  
 \end{proposition}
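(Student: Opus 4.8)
The plan is to pass to the frequency domain by evaluating the dissipation inequalities (d-iPa)/(d-sPa) along purely exponential trajectories, which turns the time-domain conditions into the pointwise matrix inequalities defining (d-PR)/(d-BR). Fix $z\in\rho(E,A)$ with $\vert z\vert>1$ and an arbitrary $u_0\in\K^m$. Since $(E,A)$ is regular and $z\in\rho(E,A)$, the sequences $u_k:=z^ku_0$ and $x_k:=z^kx_0$ with $x_0:=(zE-A)^{-1}Bu_0$ solve $Ex_{k+1}=Ax_k+Bu_k$ for every $k\geq0$ by construction, hence form a genuine (and therefore consistent) trajectory of \eqref{discr_DAE}, with output $y_k=z^k\Tc(z)u_0$. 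As the storage function is quadratic we have $V(Ex_k)=\vert z\vert^{2k}(Ex_0)^HX(Ex_0)$, so the common factor $\vert z\vert^{2k}$ will cancel throughout.

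First I would treat the impedance case. Substituting this trajectory into \eqref{eq:diPA} and dividing by $\vert z\vert^{2k}>0$ yields
\[
(\vert z\vert^2-1)\,(Ex_0)^HX(Ex_0)\ \leq\ 2\Re\bigl(u_0^H\Tc(z)u_0\bigr)=u_0^H\bigl(\Tc(z)+\Tc(z)^H\bigr)u_0.
\]
Since $X\geq0$ and $\vert z\vert>1$, the left-hand side is nonnegative, and as $u_0\in\K^m$ is arbitrary this gives $\Tc(z)+\Tc(z)^H\geq0$ on $\rho(E,A)\cap\{\vert z\vert>1\}$. This matches the algebraic identity of Lemma~\ref{lem:KYP_without_supply}, since $(Ex_0)^HX(Ex_0)=u_0^HB^H(zE-A)^{-H}E^HXE(zE-A)^{-1}Bu_0$. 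The scattering case is identical in structure: substituting the same trajectory into \eqref{eq:dsPA} gives
\[
(\vert z\vert^2-1)\,(Ex_0)^HX(Ex_0)\ \leq\ \|u_0\|^2-\|\Tc(z)u_0\|^2=u_0^H\bigl(I_m-\Tc(z)^H\Tc(z)\bigr)u_0,
\]
and the same sign argument yields $I_m-\Tc(z)^H\Tc(z)\geq0$ on $\rho(E,A)\cap\{\vert z\vert>1\}$.

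The step I expect to be the main obstacle is upgrading these inequalities, so far valid only off the finite spectrum $\sigma(E,A)$, to all of $\{\vert z\vert>1\}$, i.e.\ establishing the analyticity demanded by (d-PR)/(d-BR); a priori $\Tc$ could have poles in $\sigma(E,A)\cap\{\vert z\vert>1\}$. In the scattering case this is immediate, because $I_m-\Tc(z)^H\Tc(z)\geq0$ forces $\|\Tc(z)\|\leq1$ in a punctured neighborhood of any such point, so the isolated singularity is bounded, hence removable, and the bound persists by continuity. In the impedance case I would argue by contradiction: if $\Tc$ had a pole of order $p$ at some $z_0$ with $\vert z_0\vert>1$ and leading Laurent coefficient $R\neq0$, then evaluating $\Tc(z)+\Tc(z)^H\geq0$ along $z=z_0+\varepsilon e^{\i\theta}$ as $\varepsilon\downarrow0$ (all directions $\theta$ being admissible since $z_0$ is interior to $\{\vert z\vert>1\}$) makes the dominant term $\varepsilon^{-p}\bigl(Re^{-\i p\theta}+R^He^{\i p\theta}\bigr)$ govern the sign; comparing the directions $\theta$ and $\theta+\pi/p$ forces $Re^{-\i p\theta}+R^He^{\i p\theta}$ to be both $\geq0$ and $\leq0$, hence to vanish for all $\theta$, which contradicts $R\neq0$. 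Therefore $\Tc$ is analytic throughout $\{\vert z\vert>1\}$ and the respective inequalities extend there by continuity, which is exactly (d-PR) and (d-BR).
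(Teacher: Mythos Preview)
Your argument is correct. The first half---constructing the exponential trajectory $x_k=z^k(zE-A)^{-1}Bu_0$, $u_k=z^ku_0$, verifying consistency via the algebraic block row of $(zE-A)x_0=Bu_0$, and substituting into the dissipation inequality to obtain $\Tc(z)+\Tc(z)^H\geq0$ (resp.\ $I_m-\Tc(z)^H\Tc(z)\geq0$) on $\rho(E,A)\cap\{\vert z\vert>1\}$---is exactly what the paper does; the paper simply packages the same step by citing \cite[Proposition~2.11]{BanV19} together with Lemma~\ref{lem:KYP_without_supply}.

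Where you genuinely diverge is the analyticity step. The paper argues at the state-space level: for each $z_0\in\sigma(E,A)$ with $\vert z_0\vert>1$ it takes an eigenvector $x$ with $z_0Ex=Ax$, shows from the dissipation inequality that $Ex,Ax\in\ker X$, and then concludes $Cx=0$, so such eigenvalues are unobservable and hence not poles of $\Tc$. You instead stay entirely at the transfer-function level: in the scattering case a bounded rational function cannot have a pole (removable singularity), and in the impedance case your Laurent/rotation argument (comparing directions $\theta$ and $\theta+\pi/p$) is the classical device showing that a positive-real function has no poles in the open domain. Your route is more self-contained and avoids the external reference; the paper's route yields the additional structural information that the offending unstable modes are unobservable. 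Both are valid and complete under Assumption~\ref{Assumpt}.
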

\begin{proof}
    The proof of Proposition~\ref{prop: brpa} is presented in Section~\ref{app:brpa}. 
\end{proof}

To see where controllability and observability conditions come into play when considering impedance (scattering) passive systems, we need the following lemma.

\begin{lemma}
\label{lem:TF}
Consider a descriptor system \eqref{discr_DAE} 
with coefficients $(E,A,B,C,D)$ satisfying Assumption~\ref{Assumpt} and the associated standard-state space system \eqref{eq:indexone_ODE} with coefficients $(I_n,\Ac,\Bc,\Cc,\Dc)$. Then the following statements hold:
\begin{itemize}
\setlength{\itemindent}{1em}
    \item[\rm (a)] The transfer function of \eqref{discr_DAE} coincides with the transfer function of \eqref{eq:indexone_ODE}.
    \item[\rm (b)] The discrete-time descriptor system with coefficients $(E,A,B,C,D)$ is behaviorally controllable (observable) if and only if the standard state space system with coefficients $(I_n,\Ac,\Bc,\Cc,\Dc)$ is controllable (observable).
\end{itemize}
\end{lemma}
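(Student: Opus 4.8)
The plan is to prove both statements by directly relating the descriptor system \eqref{discr_DAE} to the reduced standard state-space system \eqref{eq:indexone_ODE}, exploiting the fact that Assumption~\ref{Assumpt} guarantees index at most one, so that the semi-explicit form \eqref{eq:semi_expl} and the subsequent elimination of the algebraic variable $x_k^2$ are available. Since the transfer function and the controllability/observability notions are invariant under the unitary transformation $(E,A,B,C,D)\mapsto(UEV,UAV,UB,CV,D)$, I would first reduce to the case where $(E,A)$ is already in the semi-explicit form \eqref{eq:semi_expl}, which changes none of the quantities in question and lets me work with the explicit block expressions for $\Ac,\Bc,\Cc,\Dc$ given in \eqref{eq:indexone_ODE}.

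For part (a), I would start from the definition $\Tc(z)=C(zE-A)^{-1}B+D$ and compute $(zE-A)^{-1}$ in block form using the semi-explicit structure. Writing out $zE-A=\begin{bmatrix}z\Sigma_E-A_{11}&-A_{12}\\-A_{21}&-A_{22}\end{bmatrix}$ and using that $A_{22}$ is invertible, a Schur-complement computation eliminates the second block and produces exactly the reduced coefficients: the $(1,1)$ Schur complement is $z\Sigma_E-(A_{11}-A_{12}A_{22}^{-1}A_{21})$, and carrying the feedthrough contributions through gives $\Tc(z)=\Cc(zI-\Ac)^{-1}\Bc+\Dc$ after the change of variable $\widehat x=\Sigma_E^{1/2}x^1$. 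The key algebraic fact is that the $\Sigma_E^{-1/2}$ factors in $\Ac,\Bc,\Cc$ cancel correctly, so that $(z\Sigma_E-(A_{11}-A_{12}A_{22}^{-1}A_{21}))^{-1}=\Sigma_E^{-1/2}(zI-\Ac)^{-1}\Sigma_E^{-1/2}$; this is a routine but careful verification. Much of this is already displayed in the passage leading to \eqref{eq:indexone_ODE}, so I would cite that derivation rather than repeat it.

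For part (b), I would use the rank characterizations (C1) and (O1). The claim is that the descriptor pencil conditions $\rk[\lambda E-A,\,B]=n$ for all $\lambda$ (behavioral controllability) are equivalent to the standard condition $\rk[\lambda I-\Ac,\,\Bc]=r$ for all $\lambda$ on the reduced system. I would show this by performing elementary block row and column operations on $[\lambda E-A,\,B]$ using the invertibility of $A_{22}$: multiplying on the left and right by the constant invertible matrices that implement the Schur reduction turns $[\lambda E-A,\,B]$ into a block-triangular matrix whose diagonal blocks are $\lambda\Sigma_E-(A_{11}-A_{12}A_{22}^{-1}A_{21})$ (up to the $\Sigma_E^{\pm1/2}$ scalings giving $\lambda I-\Ac$) together with the invertible block $-A_{22}$ and the reduced input block $\Bc$. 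Since these row/column operations are constant (independent of $\lambda$) and invertible, the rank of the full $n$-column object equals $(n-r)$ (from the $A_{22}$ block) plus the rank of $[\lambda I-\Ac,\,\Bc]$, so one has full rank $n$ if and only if the reduced condition holds. The observability statement follows by the dual argument applied to $(E^H,A^H,C^H)$, or equivalently by transposing.

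The main obstacle I anticipate is the bookkeeping in part (b): one must verify that the constant invertible matrices used to block-triangularize $[\lambda E-A,\,B]$ do not secretly depend on $\lambda$ (they do not, since only $A_{22}^{-1}$ appears) and that the $B$-column is transformed into precisely $\Bc=\Sigma_E^{-1/2}(B_1-A_{12}A_{22}^{-1}B_2)$ up to an invertible left factor, so that the ranks match exactly. A subtle point is that behavioral controllability is a pointwise-in-$\lambda$ condition, so I must confirm the equivalence holds at every $\lambda$ simultaneously rather than only generically; because the triangularizing operations are $\lambda$-independent, this is automatic, but it is worth stating explicitly. Everything else reduces to standard Schur-complement manipulations, which I would carry out but not belabor.
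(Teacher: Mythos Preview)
Your proposal is correct and follows essentially the same route as the paper: reduce to the semi-explicit form \eqref{eq:semi_expl}, use a Schur-complement computation for part (a), and for part (b) apply constant (in $\lambda$) invertible row/column operations driven by the invertibility of $A_{22}$ to equate the rank conditions (C1) and (O1) for $(E,A,B,C,D)$ with those for $(I_r,\Ac,\Bc,\Cc,\Dc)$. Your write-up is in fact more detailed than the paper's own proof, which dispatches (a) as ``standard calculations using the Schur complement'' and records only the final rank identities for (b); your explicit remark that the triangularizing operations are $\lambda$-independent is a worthwhile clarification.
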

\begin{proof}
    The proof of Lemma~\ref{lem:TF} is presented in Section~\ref{app:TF}. 
\end{proof}
As an immediate consequence of  Lemma~\ref{lem:TF} and \cite[Corollary 13.14]{HadC11} we obtain that for behaviorally controllable discrete-time descriptor systems, i.e.\ systems that satisfy (C1), positive (resp.\ bounded) realness implies impedance (resp.\ scattering) passivity.
%
%\HGcomment{turn into corollary}
\begin{corollary}
\label{cor:brkyp}
Consider a~behaviorally controllable discrete-time descriptor systems of the form \eqref{discr_DAE} satisfying Assumption~\ref{Assumpt}. If the system satisfies \emph{(d-PR)} (resp.\ \emph{(d-BR)}), then the associated standard state-space system \eqref{eq:indexone_ODE} satisfies \emph{(d-iKYP)} (resp.\ \emph{(d-sKYP)}).
\end{corollary}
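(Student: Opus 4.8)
The plan is to reduce the descriptor system to its associated standard state-space system and then invoke the known equivalence between transfer-function realness and the KYP inequality for minimal (or at least controllable and observable) state-space systems. First I would apply Lemma~\ref{lem:TF}(a) to conclude that the transfer function $\Tc$ of the descriptor system \eqref{discr_DAE} equals the transfer function of the standard state-space system $(I_n,\Ac,\Bc,\Cc,\Dc)$ from \eqref{eq:indexone_ODE}. Since positive (resp.\ bounded) realness is a property of the transfer function alone, the hypothesis (d-PR) (resp.\ (d-BR)) transfers verbatim to the standard system $(I_n,\Ac,\Bc,\Cc,\Dc)$.

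Next I would use Lemma~\ref{lem:TF}(b): behavioral controllability of the descriptor system is equivalent to controllability of $(I_n,\Ac,\Bc,\Cc,\Dc)$. Thus the hypothesis that the descriptor system satisfies (C1) means that the standard state-space system is controllable. At this point the problem is purely about a controllable standard discrete-time state-space system whose transfer function is positive (resp.\ bounded) real, and the goal is to produce a solution $\Xc=\Xc^H\geq 0$ of (d-iKYP) (resp.\ (d-sKYP)) for $(I_n,\Ac,\Bc,\Cc,\Dc)$. This is exactly the direction supplied by the cited result \cite[Corollary 13.14]{HadC11}, which I would apply to obtain the desired positive semidefinite KYP solution. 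The text preceding the corollary already announces this as an immediate consequence, so the logical skeleton is: transfer-function equality, controllability equivalence, then the Hadjidimos-type reference for the standard case.

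The main obstacle I anticipate is matching the precise hypotheses of \cite[Corollary 13.14]{HadC11} to the present setting, in particular whether that result requires full minimality (controllability \emph{and} observability) or merely controllability, and whether it is stated for the positive real or bounded real case or both. Controllability alone is typically what is needed to pass from realness to solvability of the KYP inequality on a controllable system, because controllability guarantees that the frequency-domain inequality furnished by realness can be lifted to a matrix inequality via a spectral-factorization or Lyapunov-type argument; observability is usually only needed for the converse direction or for definiteness of the solution. I would therefore verify that controllability suffices for the stated implication, and if the cited corollary is phrased for minimal systems, I would note that one may pass to a controllable-observable restriction or simply invoke the controllable version of the result. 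A secondary point to confirm is that the external Cayley-type relationship between positive and bounded real transfer functions (alluded to before Lemma~\ref{lem:KYP_without_supply}) is consistent with handling the two cases in parallel, so that the single reference covers both (d-iKYP) and (d-sKYP); if \cite{HadC11} treats only one supply rate, the other case would follow by the same Cayley transform argument connecting (d-PR) with (d-BR) and (d-iKYP) with (d-sKYP).
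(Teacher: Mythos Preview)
Your proposal is correct and follows essentially the same approach as the paper: the text immediately preceding the corollary states that it is ``an immediate consequence of Lemma~\ref{lem:TF} and \cite[Corollary 13.14]{HadC11}'', which is exactly the reduction (transfer-function equality via Lemma~\ref{lem:TF}(a), controllability transfer via Lemma~\ref{lem:TF}(b), then the cited standard state-space result) that you outline. Your caveats about checking whether \cite[Corollary 13.14]{HadC11} needs only controllability and whether it covers both supply rates are reasonable due diligence, but the paper itself simply invokes that reference without further justification.
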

In Corollary~\ref{cor:brkyp} we cannot drop the controllability assumption (C1) as the following examples show.
\begin{example}\label{ex:countbrkyp}
The discrete-time (standard state-space) system \eqref{discr_DAE} with coefficients $(E,A,B,C,D)=(1,\tfrac12,0,1,0)$ is stable and not behaviorally controllable. The system fulfills $\mathcal{T}(s)=C(zE-A)^{-1}B+D=0$ and is therefore positive real, but (d-iKYP) has no solution. 

Consider the discrete-time (standard state-space) system \eqref{discr_DAE} with coefficients  $(E,A,B,C,D)=(1,\tfrac12,0,1,1)$ is stable and not behaviorally controllable. The system fulfills $\mathcal{T}(s)=C(zE-A)^{-1}B+D=1$ and is therefore bounded real, but (d-sKYP) has no solution. 

These two examples also show that the controllability assumption (C1) cannot be relaxed to a stabilizability assumption. 
\end{example}

For standard discrete-time state-space systems, the equivalence between positive (bounded) realness properties and solvability of the corresponding KYPs can be proved under additional strictness assumptions of the involved inequalities, see \cite{LeeC00}. In particular, there it was shown that (d-iKYP) (or (d-sKYP)) with strict inequalities is equivalent to strict positive (bounded) realness, respectively,  i.e.\ (d-PR) and (d-BR) holds with positive definiteness and for all $z\in\dC$ with $\vert z\vert \geq 1$.

For related results on the relation between bounded realness and positive realness and solutions of KYP inequalities on certain subspaces for descriptor systems with index higher than one we refer to \cite{BanV19,ReiS10}.

In this section, we have presented the relationship between impedance (scattering) passivity, positive (bounded) realness for dissipative systems, and the solvability of KYP LMIs. In the next section, we derive the relation of these properties to discrete-time port-Hamiltonian systems.

\section{Port-Hamiltonian representation of discrete-time dissipative systems}
\label{sec:phDef}
In this section, we propose an extension of the port-Hamiltonian (pH) framework to discrete-time descriptor systems using the scattering supply rate rather than the impedance supply rate which is typically used  to define continuous-time pH systems. 

Recall that we consider only descriptor systems of the form \eqref{discr_DAE} with coefficients $(E,A,B,C,D)$ that are completely causal, i.e satisfy Assumption~\ref{Assumpt}, which implies by Proposition \ref{prop:causal} that the system has an index at most one. Then 
 we can alternatively consider the associated standard state-space system \eqref{eq:indexone_ODE} with coefficients $(I_n,\Ac,\Bc,\Cc,\Dc)$. Note that if $E=I_n$ then we trivially have $(I_n,\Ac,\Bc,\Cc,\Dc)=(I_n,A,B,C,D)$.

For the definition of standard discrete-time pH systems, we consider for some $X=X^H>0\in\K^{n\times n}$ the \emph{weighted Euclidean norm}
\[
\left\|\begin{bmatrix}
 x\\ u
 \end{bmatrix}\right\|_X:= \left\|\begin{bmatrix} X^{\frac 12}&0\\ 0&I_m\end{bmatrix}\begin{bmatrix}
 x\\ u
 \end{bmatrix}\right\|,
\]
where $\|\cdot\|$ denotes the standard Euclidean norm on $\K^n\times\K^m$. We also use the \emph{weighted spectral norm}
 \[
\left\|\begin{bmatrix}
 A&B\\ C&D
 \end{bmatrix}\right\|_X:=\sup_{(x,u)\neq 0}\frac{\left\|\begin{smallbmatrix}
 A&B\\ C&D
 \end{smallbmatrix}\begin{smallbmatrix}
 x\\ u
 \end{smallbmatrix}\right\|_X}{\|\begin{smallbmatrix}
 x\\ u
 \end{smallbmatrix}\|_X}.
 \]
We then propose the following definition of discrete-time standard state-space and descriptor pH systems.
 \begin{definition}\label{def:discpH} 
 A standard discrete-time state-space system of the form \eqref{discr_DAE} with coefficients $(I_n,A,B,C,D)$ is called standard \emph{discrete-time scattering pH system} (d-spH) if there exists $X=X^H>0$ such that %the system  $(I_n,\tilde{A},\tilde{B},\tilde{C},D)$ given by \eqref{eq:scat_sys_trafo} and all singular values of $\begin{bmatrix}
% \tilde{A} & \tilde{B}\\ \tilde{C} & D
% \end{bmatrix}$ are in $[0,1]$ i.e. 
\[
\Big\| \begin{bmatrix}
 A & B\\ C & D
 \end{bmatrix}\Big\|_X \leq 1.
 \]

A completely causal discrete-time descriptor system of the form \eqref{discr_DAE} with coefficients $(E,A,B,C,D)$  and the associated standard discrete-time state-space system \eqref{eq:indexone_ODE} with coefficients $(I_n,\Ac,\Bc,\Cc,\Dc)$ is called \emph{discrete-time scattering pH descriptor system} if there exists $\mathcal X=\mathcal X^H>0$ such that 
\[
\Big\| \begin{bmatrix}
 \Ac & \Bc\\ \Cc & \Dc
 \end{bmatrix}\Big\|_{\mathcal X} \leq 1.
 \]
 \end{definition}
 
 Here $X$ ($\mathcal X$) is the weight matrix which can be used to define the Hamiltonian of the system according to $\Hc(x):=\tfrac12x^HXx$ ($\Hc(x):=\tfrac12x^H\mathcal X x$). This Hamiltonian is also a Lyapunov function because Definition~\ref{def:discpH} implies that the following Lyapunov inequality holds 
 \[
 -A^H X A + X \geq 0 \quad (\text{resp.\ }\,  -\mathcal{A}^H \mathcal X \mathcal{A} + \mathcal X \geq 0).
 \] 
Hence, Proposition \ref{prop:stable} which is presented in Section~\ref{sec:stable} yields that discrete-time pH (descriptor) systems are stable. 
 
 The  pH formulation for continuous-time descriptor systems also allows for semidefinite Hamiltonians $\mathcal{H}$. However, this may lead to pH descriptor systems that are unstable without imposing further assumptions, see \cite{MehMW18,GerH21}. The same is true for discrete-time systems, which is why we restrict ourselves to positive definite matrices $X$ ($\mathcal X$) in Definition~\ref{def:discpH}.
 
The following proposition shows that discrete-time pH descriptor systems defined in this way are  scattering passive. Furthermore, for a discrete-time pH descriptor system with a solution $0<X=X\in\K^{n\times n}$ we may also consider the following transformed standard state-space system
\begin{align}
\label{eq:scat_sys_trafo}
\begin{split}
     \underbrace{X^{\frac{1}{2}} x_{k+1}}_{\widetilde{x}_{k+1}} &= \underbrace{X^{\frac{1}{2}} A X^{-\frac{1}{2}}}_{=:\widetilde{A}} \underbrace{X^{\frac{1}{2}} x_k}_{\widetilde{x}_k} + \underbrace{X^{\frac{1}{2}} B}_{=:\widetilde{B}} u_k\\
     y_k &= \underbrace{C X^{-\frac{1}{2}}}_{=:\widetilde{C}} \underbrace{X^{\frac{1}{2}} x_k}_{\widetilde{x}_k} + D u_k.
     \end{split}
 \end{align}
with the same transfer function, i.e.\ $\widetilde C(zI_n-\widetilde A)^{-1}\widetilde B+D=C(zI_n-A)^{-1}B+D$.

 \begin{proposition}
 \label{prop:pHandKYP}
Consider a standard discrete-time  state-space system with coefficients  $(I_n,A,B,C,D)$. If \emph{(d-sKYP)} has a solution $0<X=X^H$ then the transformed system given by \eqref{eq:scat_sys_trafo} satisfies
\begin{align}
\label{eq:norm_estimate}
 \left\|\begin{bmatrix}
 A&B\\ C&D
 \end{bmatrix}\right\|_X= \left\| \begin{bmatrix}\widetilde A& \widetilde B\\ \widetilde C&  D \end{bmatrix}\right\|\leq 1.
 \end{align}
 Conversely, if \eqref{eq:norm_estimate} holds for some $0<X=X^H$ then this $X$ is a solution to \emph{(d-sKYP)}.
 \end{proposition}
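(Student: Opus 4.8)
The plan is to reduce both assertions to a single congruence identity, so that the norm bound and the LMI become equivalent by inspection. First I would prove the displayed equality
\[
\left\|\begin{bmatrix} A & B \\ C & D\end{bmatrix}\right\|_X = \left\|\begin{bmatrix} \widetilde A & \widetilde B \\ \widetilde C & D\end{bmatrix}\right\|
\]
by the change of variables $\widetilde x := X^{\frac12}x$. Because $X>0$, the map $(x,u)\mapsto(\widetilde x,u)$ is a bijection of $\K^n\times\K^m$, so the supremum defining the weighted operator norm may be taken over $(\widetilde x,u)$ instead of $(x,u)$. Substituting the definitions $\widetilde A = X^{\frac12}AX^{-\frac12}$, $\widetilde B = X^{\frac12}B$, $\widetilde C = CX^{-\frac12}$ shows that the weighted norm of the output $\begin{smallbmatrix}A & B\\ C & D\end{smallbmatrix}\begin{smallbmatrix}x\\ u\end{smallbmatrix}$ equals the ordinary norm of $\begin{smallbmatrix}\widetilde A & \widetilde B\\ \widetilde C & D\end{smallbmatrix}\begin{smallbmatrix}\widetilde x\\ u\end{smallbmatrix}$, while the weighted norm of the input equals the ordinary norm of $\begin{smallbmatrix}\widetilde x\\ u\end{smallbmatrix}$; taking the quotient and the supremum yields the identity. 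The same substitution also confirms the asserted equality of transfer functions.

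Writing $M := \begin{smallbmatrix}\widetilde A & \widetilde B\\ \widetilde C & D\end{smallbmatrix}$, I would then invoke the elementary fact that $\|M\|\le 1$ if and only if $I - M^HM\ge 0$, which turns the estimate \eqref{eq:norm_estimate} into a positive-semidefiniteness condition. Expanding $I - M^HM$ block by block and substituting back the definitions of $\widetilde A,\widetilde B,\widetilde C$, the leading block $I_n - \widetilde A^H\widetilde A - \widetilde C^H\widetilde C$ becomes $X^{-\frac12}(X - A^HXA - C^HC)X^{-\frac12}$, the off-diagonal block $-\widetilde A^H\widetilde B - \widetilde C^HD$ becomes $-X^{-\frac12}(A^HXB + C^HD)$, and the trailing block reduces to $I_m - B^HXB - D^HD$.

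The key step is to recognize these as exactly the blocks of the (d-sKYP) matrix (with $E = I_n$, so that $E^HXE = X$) pre- and post-multiplied by $\diag(X^{-\frac12}, I_m)$, i.e.
\[
I - M^HM = \begin{bmatrix} X^{-\frac12} & 0 \\ 0 & I_m\end{bmatrix}\begin{bmatrix} -A^HXA + X - C^HC & -A^HXB - C^HD \\ -D^HC - B^HXA & I_m - D^HD - B^HXB\end{bmatrix}\begin{bmatrix} X^{-\frac12} & 0 \\ 0 & I_m\end{bmatrix}.
\]
Since $X>0$, the outer factor is invertible, so this identity exhibits $I - M^HM$ as a congruence of the (d-sKYP) matrix and therefore preserves positive semidefiniteness. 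Hence $I - M^HM\ge 0$ holds precisely when the (d-sKYP) matrix is positive semidefinite. Chaining this with the norm identity of the first step and with $\|M\|\le 1 \Leftrightarrow I - M^HM\ge 0$ yields both implications of the proposition at once.

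I expect the only real difficulty to be careful bookkeeping of the factors $X^{\pm\frac12}$ while expanding $M^HM$ and verifying that all four blocks line up with the congruence-transformed (d-sKYP) matrix; there is no conceptual obstacle. It is precisely the congruence structure---rather than two separate arguments---that makes the equivalence symmetric, so that the direct and converse statements drop out of the same identity.
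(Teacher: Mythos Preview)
Your proposal is correct and follows essentially the same route as the paper: both arguments hinge on the congruence identity
\[
I-M^HM=\diag(X^{-\frac12},I_m)\,\bigl[\text{(d-sKYP) matrix with }E=I_n\bigr]\,\diag(X^{-\frac12},I_m),
\]
together with the change of variables $\widetilde x=X^{\frac12}x$ to identify the weighted and unweighted operator norms. The only difference is the order of presentation---you establish the norm equality first and then the congruence, whereas the paper does the reverse---but the content is the same.
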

 \begin{proof}
Using a congruence transformation, we see that the inequality (d-sKYP) is equivalent to
 \begin{align*}
     &\begin{bmatrix}
     - X^{-\frac{1}{2}} A^H X^{\frac{1}{2}}X^{\frac{1}{2}}AX^{-\frac{1}{2}}+I-X^{-\frac{1}{2}} C^H C X^{-\frac{1}{2}}
     & - X^{-\frac{1}{2}} A^H X^{\frac{1}{2}} X^{\frac{1}{2}}B- X^{-\frac{1}{2}} C^H D\\
     -D^H C X^{-\frac{1}{2}} - B^H X^{\frac{1}{2}}X^{\frac{1}{2}} A X^{-\frac{1}{2}} & I -D^H D - B^H X^{\frac{1}{2}} X^{\frac{1}{2}}B
     \end{bmatrix} \\
     &= \begin{bmatrix}
     - \widetilde{A}^H \widetilde{A} + I - \widetilde{C}^H \widetilde{C} & - \widetilde{A}^H \widetilde{B}- \widetilde{C}^H D\\
     - \widetilde{B}^H \widetilde{A} - D^H \widetilde{C} & I - D^H D - \widetilde{B}^H \widetilde{B}
     \end{bmatrix}\\
     &= I - \begin{bmatrix}
     \widetilde{A} &\widetilde{B}\\
     \widetilde{C} &D
     \end{bmatrix}^H \begin{bmatrix}
     \widetilde{A}& \widetilde{B}\\
     \widetilde{C}& D
     \end{bmatrix}
     \geq 0.
 \end{align*}
 This inequality can be written as
 \begin{equation*}
 \left\|\begin{bmatrix}
     \widetilde{A} &\widetilde{B}\\
     \widetilde{C} &D
     \end{bmatrix} x \right \|^2 =
     x^H \begin{bmatrix}
     \widetilde{A} &\widetilde{B}\\
     \widetilde{C} &D
     \end{bmatrix}^H \begin{bmatrix}
     \widetilde{A}& \widetilde{B}\\
     \widetilde{C}& D
     \end{bmatrix} x \leq x^H x = \| x \|^2 \quad \text{for all } x \in \K^{n+m},
 \end{equation*}
 and therefore we get
 \begin{equation*}
    \left\|\begin{bmatrix}
     \widetilde{A} &\widetilde{B}\\
     \widetilde{C} &D
     \end{bmatrix} \right \| = \max_{\substack{x \in \K^{n+m}\\ x \neq 0}} \frac{\left\|\begin{bmatrix}
     \widetilde{A} &\widetilde{B}\\
     \widetilde{C} &D
     \end{bmatrix} x \right \|}{\| x \|} \leq 1.
 \end{equation*}
 It remains to verify the equality in \eqref{eq:norm_estimate}. Consider 
 \begin{align*}
 \left\|\begin{bmatrix}
 A&B\\ C&D
 \end{bmatrix}\right\|_X&=\sup_{(x,u)\neq 0, \|(x,u)\|_X=1}\left\|\begin{bmatrix}
 A&B\\ C&D
 \end{bmatrix}\begin{bmatrix}
 x\\ u
 \end{bmatrix}\right\|_X\\&=\sup_{(x,u)\neq 0, \|(X^{\tfrac{1}{2}}x,u)\|=1}\left\|\begin{bmatrix}
 X^{\tfrac{1}{2}}A&X^{\tfrac{1}{2}} B\\ C&D
 \end{bmatrix}\begin{bmatrix}
 x\\ u
 \end{bmatrix}\right\|\\
 &=\sup_{(x,u)\neq 0, \|(X^{\tfrac{1}{2}}x,u)\|=1}\left\|\begin{bmatrix}
 X^{\tfrac{1}{2}}AX^{-\tfrac{1}{2}}&X^{\tfrac{1}{2}} B\\ 
 CX^{-\tfrac{1}{2}}&D
 \end{bmatrix}\begin{bmatrix}
 X^{\tfrac{1}{2}}x\\ u
 \end{bmatrix}\right\|
\\
 &=\left\|\begin{bmatrix}
     \widetilde{A} &\widetilde{B}\\
     \widetilde{C} &D
     \end{bmatrix} \right \|.
     \end{align*}
\end{proof}
Although Proposition~\ref{prop:pHandKYP} is only stated for standard state-space systems, from Definition~\ref{def:discpH} and Corollary~\ref{cor: pakyp} it follows that discrete-time pH descriptor systems fulfill (d-sKYP) and that they are scattering passive. This is summarized in the following corollary.
%.
\begin{corollary}\label{cor: kyptoph}
Consider a completely causal descriptor system \eqref{discr_DAE} (i.e. satisfying Assumption~\ref{Assumpt}) with coefficients $(E,A,B,C,D)$. Then the system is scattering pH in the sense of Definition~\ref{def:discpH} if and only if \emph{(d-sKYP)} for the standard state-space system with coefficients $(I_n,\Ac,\Bc,\Cc,\Dc)$ has a solution $\mathcal{X}=\mathcal{X}^H>0$. Furthermore, the descriptor system with coefficients $(E,A,B,C,D)$ is scattering passive. 
\end{corollary}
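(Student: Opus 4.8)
The plan is to assemble the claim from Proposition~\ref{prop:pHandKYP} and Corollary~\ref{cor: pakyp}, both applied with the associated standard state-space coefficients $(I_n,\Ac,\Bc,\Cc,\Dc)$ playing the role of $(I_n,A,B,C,D)$.

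For the equivalence I would start from Definition~\ref{def:discpH}: the completely causal descriptor system is scattering pH exactly when there is some $\mathcal{X}=\mathcal{X}^H>0$ with $\left\|\begin{smallbmatrix}\Ac&\Bc\\\Cc&\Dc\end{smallbmatrix}\right\|_{\mathcal{X}}\leq 1$. Since $(I_n,\Ac,\Bc,\Cc,\Dc)$ is itself a standard state-space system, Proposition~\ref{prop:pHandKYP} applies to it directly and shows that, for a fixed weight $\mathcal{X}=\mathcal{X}^H>0$, this norm bound and the solvability of \emph{(d-sKYP)} for $(I_n,\Ac,\Bc,\Cc,\Dc)$ by the same $\mathcal{X}$ are equivalent. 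Quantifying over $\mathcal{X}$ then gives the asserted biconditional between being scattering pH and the existence of a positive definite solution of \emph{(d-sKYP)} for $(I_n,\Ac,\Bc,\Cc,\Dc)$.

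For the ``furthermore'' part I would assume the system is scattering pH and use the equivalence just established to obtain a solution $\mathcal{X}=\mathcal{X}^H>0$ of \emph{(d-sKYP)} for $(I_n,\Ac,\Bc,\Cc,\Dc)$. Such an $\mathcal{X}$ is in particular positive semidefinite, so the second assertion of Corollary~\ref{cor: pakyp} applies and yields \emph{(d-sPa)}, i.e.\ the existence of a quadratic storage function of the form \eqref{def:quad_store} satisfying the scattering dissipation inequality \eqref{eq:dsPA}. As any function of the form \eqref{def:quad_store} is nonnegative and vanishes at $0$, it is a legitimate storage function in the sense of Definition~\ref{def:disssyst}, whence the descriptor system is dissipative with respect to $s_{sca}$, that is, scattering passive. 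Here one also uses that the passage to the semi-explicit form \eqref{eq:semi_expl} under Assumption~\ref{Assumpt} is a unitary equivalence leaving the input and output, and hence $s_{sca}$, unchanged, so that \emph{(d-sPa)} transfers between $(E,A,B,C,D)$ and its reduced realization.

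I do not expect a genuine obstacle: the argument is essentially a chaining of results already proved. The one point demanding care is the mismatch between the strict weight $\mathcal{X}>0$ imposed by the pH definition and the merely semidefinite weight $\mathcal{X}\geq 0$ appearing in Corollary~\ref{cor: pakyp}. The implication towards scattering passivity requires only $\mathcal{X}\geq 0$, which a positive definite $\mathcal{X}$ trivially satisfies, so nothing is lost; one should simply note that the reverse passage, from a semidefinite \emph{(d-sKYP)} solution back to a positive definite pH weight, is not claimed and would in general fail.
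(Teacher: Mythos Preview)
Your proposal is correct and follows essentially the same route as the paper: the paper justifies the corollary by saying that it follows directly from Definition~\ref{def:discpH}, Proposition~\ref{prop:pHandKYP}, and Corollary~\ref{cor: pakyp}, which is precisely the chaining you describe. Your additional remarks on the $\mathcal{X}>0$ versus $\mathcal{X}\geq 0$ distinction and on the unitary passage to the semi-explicit form are valid clarifications that go slightly beyond what the paper spells out, but they do not change the structure of the argument.
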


Given a bounded real rational function, then by Corollary~\ref{cor:brkyp} this function has a minimal realization as a standard state-space system which is scattering passive. 
 Hence, using Proposition~\ref{prop:pHandKYP} we obtain that every bounded real rational function can be realized as a standard discrete-time state-space  pH system.

The following result from \cite[Theorem 13.18]{HadC11} shows that every behaviorally observable scattering passive discrete-time system, i.e. satisfying (O1), is pH. Furthermore, by Lemma~\ref{lem:TF} this result trivially extends to behaviorally observable completely causal discrete-time descriptor systems.
\begin{proposition}\label{prop: observqposdif }
%Consider the system given by $(I_n,A,B,C,D)$. If $(A,C)$ is observable and let $Q\geq 0$ be a solution of (d-sKYP) or (d-iKYP) then $Q$ is positive definite. 
Consider a completely causal and behaviorally observable discrete-time descriptor system of the form \eqref{discr_DAE}  with coefficients $(E,A,B,C,D)$ and  the associated standard discrete-time state-space system \eqref{eq:indexone_ODE} with coefficients $(I_n,\Ac,\Bc,\Cc,\Dc)$. Then every solution $X=X^H\in\K^{n\times n}$ of \emph{(d-iKYP)} (or \emph{(d-sKYP)}) for $(I_n,\Ac,\Bc,\Cc,\Dc)$ is positive definite. 
\end{proposition}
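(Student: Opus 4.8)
The plan is to show that the kernel of any positive semidefinite solution $X$ is an $\Ac$-invariant subspace contained in $\ker\Cc$, and then to invoke observability to force this kernel to be trivial. First I would translate the hypothesis: by Lemma~\ref{lem:TF}(b), behavioral observability of the descriptor system $(E,A,B,C,D)$ is equivalent to observability of the associated standard state-space system $(I_n,\Ac,\Bc,\Cc,\Dc)$, so throughout I may work with the pair $(\Cc,\Ac)$ and use that its unobservable subspace $\bigcap_{k\geq 0}\ker(\Cc\Ac^k)$ is $\{0\}$. This reduces the descriptor claim to a statement about the standard realization, in the spirit of the cited result \cite[Theorem 13.18]{HadC11}.

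Next, fix $x\in\ker X$; the goal is to prove $x=0$. Since $X\geq 0$ we may factor $X=(X^{1/2})^2$ with $X^{1/2}\geq 0$, and $Xx=0$ is equivalent to $X^{1/2}x=0$. Using this, the quadratic form $x^HM_{11}x$ in the leading $(1,1)$ block of the KYP matrix collapses to $-\|X^{1/2}\Ac x\|^2$ in the impedance case, and to $-\|X^{1/2}\Ac x\|^2-\|\Cc x\|^2$ in the scattering case. Because the full KYP matrix is positive semidefinite, its diagonal block $M_{11}$ is positive semidefinite, so $x^HM_{11}x\geq 0$; together with the computation this forces $x^HM_{11}x=0$ and hence (again by $M_{11}\geq 0$) $M_{11}x=0$. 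In particular $X^{1/2}\Ac x=0$, i.e.\ $X\Ac x=0$, which shows that $\ker X$ is $\Ac$-invariant.

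The remaining step is to establish $\Cc x=0$, and here the scattering and impedance cases diverge. In the scattering case it is immediate, since the vanishing of $-\|X^{1/2}\Ac x\|^2-\|\Cc x\|^2$ already gives $\Cc x=0$. The impedance case is the delicate point and the main obstacle: there $M_{11}=-\Ac^HX\Ac+X$ carries no $\Cc$-term, so I would instead use the standard positivity fact that for a positive semidefinite block matrix $M_{11}w=0$ implies $M_{12}^Hw=0$ (obtained by testing the quadratic form on the vector $(w,-sM_{12}^Hw)$ and letting $s\to 0^+$, so that the linear term must vanish). Applying this with $w=x$ gives $(\Cc-\Bc^HX\Ac)x=0$, and since $X\Ac x=0$ is already known, this reduces to $\Cc x=0$.

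Finally I would combine the two structural facts: $\ker X$ is $\Ac$-invariant and $\ker X\subseteq\ker\Cc$. Hence for every $x\in\ker X$ and every $k\geq 0$ we have $\Ac^kx\in\ker X\subseteq\ker\Cc$, so $\Cc\Ac^kx=0$; thus $x$ lies in the unobservable subspace, which is trivial by observability. Therefore $\ker X=\{0\}$ and $X>0$, for solutions of either (d-iKYP) or (d-sKYP). The only nonroutine ingredient is the off-diagonal positivity argument required in the impedance case; the $\Ac$-invariance and the scattering conclusion follow directly from positive semidefiniteness of the KYP matrix.
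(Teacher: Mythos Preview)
Your argument is correct. The paper itself does not spell out a proof here: it simply invokes \cite[Theorem 13.18]{HadC11} for the standard state-space case and then, exactly as you do, appeals to Lemma~\ref{lem:TF}(b) to reduce the descriptor statement to the standard one. Your proposal therefore follows the same overall route (reduce via Lemma~\ref{lem:TF}, then handle $(I_n,\Ac,\Bc,\Cc,\Dc)$), but supplies a self-contained proof of the cited step via the classical ``$\ker X$ is $\Ac$-invariant and contained in $\ker\Cc$'' argument. The impedance case is handled correctly: once $M_{11}x=0$ for the positive semidefinite KYP block matrix, your off-diagonal positivity argument yields $M_{12}^Hx=(\Cc-\Bc^HX\Ac)x=0$, and combining with $X\Ac x=0$ gives $\Cc x=0$. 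In short, there is no gap; you are writing out what the paper outsources to the reference.
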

The following example shows the condition of behavior observability (condition (O1)) cannot be omitted, i.e. that not every scattering passive discrete-time descriptor system can be expressed equivalently as discrete-time pH system, see  \cite{CheGH22} for a similar example for continuous-time impedance passive systems.
\begin{example}\label{ex: counterkypph}
Consider the discrete-time system with coefficients $(E,A,B,C,D)= (1, \tfrac{1}{2}, \tfrac{1}{2}, 0,1)$. This system is not observable and the matrix
\begin{align*}
    \begin{bmatrix}
    -A^H X A +E^H X E - C^H C & -A^H X B - C^H D\\
    -D^H C -B^H X A & 1- D^H D - B^H X B
    \end{bmatrix} =
    \tfrac{1}{4} \begin{bmatrix}
    3 X & - X \\ -X & -X
    \end{bmatrix}
\end{align*}
is positive semidefinite only if $X=0$ and then it satisfies the (d-sKYP).
%\KCcomment{This is not consistent. Since we said that in (d-sKYP) that X is definite and not semidefinite. Also in the figure we assume that we are the semidefinite case since we need observability. So I would suggest making it semidefinite in the definition of KYP}
However, for $X=X^H>0$
 \begin{align*}
     \left\|\begin{bmatrix}
 A&B\\ C&D
 \end{bmatrix}\right\|_X
 &=  \left\|\begin{bmatrix}
 \tfrac{1}{2}& \tfrac{1}{2}\\ 0&1
 \end{bmatrix}\right\|_X
 = \sup_{(x,u)\neq 0} \frac{\left\|\begin{bmatrix}
 \tfrac{1}{2} (x +u)\\ u
 \end{bmatrix}\right\|_X}{ \left\|\begin{bmatrix}
 x\\ u
 \end{bmatrix}\right\|_X}
 = \sup_{(x,u)\neq 0} \frac{\left\|\begin{bmatrix}
 \tfrac{1}{2} X^{\frac 12} (x +u)\\ u
 \end{bmatrix}\right\|}{ \left\|\begin{bmatrix}
X^{\frac 12} x\\ u
 \end{bmatrix}\right\|}\\
 &= \sup_{(x,u)\neq 0} \frac{\tfrac{1}{4}X (x+u)^2 + u^2}{Xx^2+u^2}
 = \sup_{(x,u)\neq 0} 
 \frac{\tfrac{1}{4}X (x+u)^2 -Xx^2}{Xx^2+ u^2} +1 > 1.
 \end{align*}
Therefore, it is not a discrete-time scattering pH system.
\end{example}

We summarize the subtle relationship between the different passivity related concepts for discrete-time passive systems in Figure~\ref{fig:overvieweinvertible} and Figure~\ref{fig:overvieweinvertible2}. 

   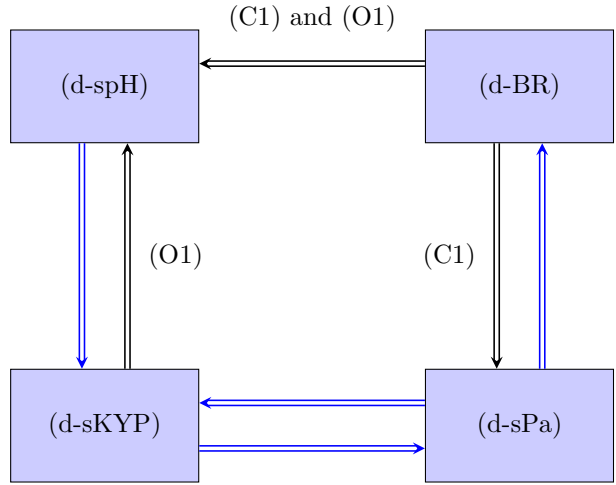
\begin{figure}[htbp!]
    \centering
        \begin{tikzpicture}
        \node[block] (a) {(d-spH)};
        \node[block, below =3cm of a]   (b){(d-sKYP)};
        \node[block, right =3cm of b]   (c){(d-sPa)};
        \node[block, above =3cm of c]   (d){(d-BR)};
        
        \draw[->,semithick,double,double equal sign distance,>=stealth, color=blue] ([xshift=-2ex]a.south) -- ([xshift=-2ex]b.north);
        \draw[->,semithick,double,double equal sign distance,>=stealth, color=black] ([xshift=2ex]b.north) -- ([xshift=2ex]a.south) node[midway,right = 1 ex]{(O1)};
        %\draw[->,semithick,double,double equal sign distance,>=stealth] (b.west) --  ++(-10pt,0) coordinate[yshift=-1.7 cm,](r){} |- (a.west)node[midway,below left= 0.75cm and 1ex]{\parbox{2cm}{\begin{center} observable\end{center}}};
        \draw[->,semithick,double,double equal sign distance,>=stealth, color=blue] ([yshift=-2ex]b.east) -- ([yshift=-2ex]c.west) node[midway,below = 1 ex]{};
        \draw[->,semithick,double,double equal sign distance,>=stealth, color=blue] ([yshift=2ex]c.west) -- ([yshift=2ex]b.east) node[midway,above = 1 ex]{ };
        \draw[->,semithick,double,double equal sign distance,>=stealth, color=blue]([xshift=2ex]c.north) -- ([xshift=2ex]d.south);
        \draw[->,semithick,double,double equal sign distance,>=stealth,color=black] ([xshift=-2ex]d.south) -- ([xshift= -2ex]c.north) node[midway,left = 1 ex]{ (C1) };
        %\draw[->,semithick,double,double equal sign distance,>=stealth, color=black] (d.east) --  ++(10pt,0) coordinate[yshift=-1.7cm,](r){} |- (c.east)node[midway,below right= -2 cm and 1ex]{controllable};
        \draw[->,semithick,double,double equal sign distance,>=stealth] ([yshift=2ex]d.west) -- ([yshift=2ex]a.east)node[midway,above = 2 ex]{(C1) and (O1)};
          \end{tikzpicture}
    \caption{The relationship between (d-spH), (d-sKYP), (d-sPa) and (d-BR) for discrete-time descriptor systems satisfying Assumption~\ref{Assumpt}. The color blue marks implication without additional assumptions and the color black implications with additional assumptions.\\
       (C1) behaviorally controllable,
       (O1) behaviorally observable,}
    \label{fig:overvieweinvertible}
\end{figure}  
   \begin{figure}[htbp!]
    \centering
    \scalebox{0.8}{
        \begin{tikzpicture}
        \node[block] (a) {(d-spH)};
        \node[block, below =3cm of a]   (b){(d-sKYP)};
        \node[block, right =4cm of b]   (c){(d-sPa)};
        \node[block, above =3cm of c]   (d){(d-BR)};
        
        \draw[->,semithick,double,double equal sign distance,>=stealth, color=blue] ([xshift=-2ex]a.south) -- ([xshift=-2ex]b.north)node[midway, below left=-2ex and  1 ex]{ Cor. \ref{cor: kyptoph}};
        \draw[->,semithick,double,double equal sign distance,>=stealth] ([xshift=2ex]b.north) -- ([xshift=2ex]a.south) node[midway, below right=-4ex and  1 ex]{\parbox{1.5cm}{\begin{center} Cor. \ref{cor: kyptoph} \\and\\Prop. \ref{prop: observqposdif }  \end{center}}};
        \draw[->,semithick,double,double equal sign distance,>=stealth,negated, color=red] (b.west) --  ++(-30pt,0) coordinate[yshift=-1.7 cm,](r){} |- (a.west)node[midway,below left= 10.5ex and 1ex]{\parbox{1.5cm}{\begin{center} Ex.~\ref{ex: counterkypph}\end{center}}};
        \draw[->,semithick,double,double equal sign distance,>=stealth, color=blue] ([yshift=-2ex]b.east) -- ([yshift=-2ex]c.west)node[midway,below = 1 ex]{Prop.~\ref{prop:KYPthenPa} or Cor.~\ref{cor: pakyp}};
        \draw[->,semithick,double,double equal sign distance,>=stealth, color=blue] ([yshift=2ex]c.west) -- ([yshift=2ex]b.east) node[midway,above = 1 ex]{Cor.~\ref{cor: pakyp} };
        \draw[->,semithick,double,double equal sign distance,>=stealth, color=blue]([xshift=2ex]c.north) -- ([xshift=2ex]d.south)node[midway,right = 1 ex]{Prop.~\ref{prop: brpa}~~};
        \draw[->,semithick,double,double equal sign distance,>=stealth] ([xshift=-2ex]d.south) -- ([xshift=-2ex]c.north) node[midway,left = 1 ex]{Cor.~\ref{cor:brkyp}~~};
        \draw[->,semithick,double,double equal sign distance,>=stealth, negated,color=red] (d.east) --  ++(30pt,0) coordinate[yshift=-1.7cm,](r){} |- (c.east)node[midway,below right= -2.5 cm and 1ex]{Ex.~\ref{ex:countbrkyp}};
        %\draw[->,semithick,double,double equal sign distance,>=stealth] ([yshift=2ex]d.west) -- ([yshift=2ex]a.east)node[midway,above = 2 ex]{minimal, missing};
          \end{tikzpicture}
          }
    \caption{The relationship between (d-spH), (d-sKYP), (d-sPa) and (d-BR) for  discrete-time descriptor systems satisfying Assumption~\ref{Assumpt}. The color blue marks implication without additional assumptions and the color black implications with additional assumptions.  Counterexamples for the case that assumptions are not fulfilled are colored in red. }
    \label{fig:overvieweinvertible2}
\end{figure}
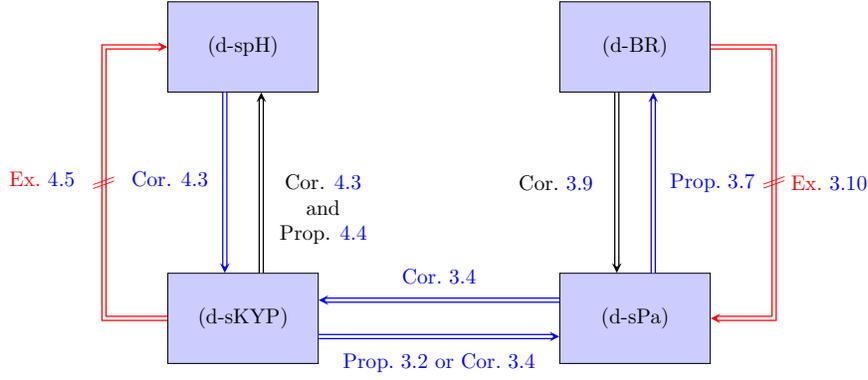

\section{External Cayley transformation of descriptor systems}\label{sec: externalcayley}
In this section, we recall the external Cayley transformation, which was used e.g.\ in \cite{StaW12} for standard state-space systems to relate impedance and scattering passive systems, and we extend this approach to descriptor systems.

To relate a scattering passive system with supply rate $s_{sca}$ depending on the output $y$ and  input $u$ and the supply rate $s_{imp}$ which depends on the output $f$ and input $e$ we consider the following unitary transformation
\begin{align}
\label{ext_cayley}
\begin{bmatrix}
y\\u
\end{bmatrix}=\frac{1}{\sqrt{2}}\begin{bmatrix}
-I_m&I_m\\I_m&I_m
\end{bmatrix}\begin{bmatrix}
f\\e
\end{bmatrix}
\end{align}
which results in 
\begin{align*}
s_{sca}(y,u)&=\begin{bmatrix}
y\\u
\end{bmatrix}^H\begin{bmatrix}
-I_m&0\\0&I_m
\end{bmatrix} \begin{bmatrix}
y\\u
\end{bmatrix}\\&=\frac{1}{2}\begin{bmatrix}
f\\e
\end{bmatrix}^H\begin{bmatrix}
-I_m&I_m\\ \phantom{-}I_m&I_m
\end{bmatrix}^H\begin{bmatrix}
-I_m&0\\0&I_m
\end{bmatrix}\begin{bmatrix}
-I_m&I_m\\ \phantom{-}I_m&I_m
\end{bmatrix}\begin{bmatrix}
f\\e
\end{bmatrix}\\
&=\frac{1}{2}\begin{bmatrix}
f\\e
\end{bmatrix}^H\begin{bmatrix}
-I_m&I_m\\ \phantom{-} I_m&I_m
\end{bmatrix}^H\begin{bmatrix}
I_m&-I_m\\ I_m&\phantom{-}I_m
\end{bmatrix}\begin{bmatrix}
f\\e
\end{bmatrix}\\&= \begin{bmatrix}
f\\e
\end{bmatrix}^H\begin{bmatrix}
0&I_m\\I_m&0
\end{bmatrix}\begin{bmatrix}
f\\e
\end{bmatrix}
\\&=s_{imp}(f,e).
\end{align*}
In \cite{KurS07,StaW12}, the transformation \eqref{ext_cayley} is called \emph{external Cayley transform}, see also \cite{Liv73}, where the name \emph{diagonal transform} was used. It provides a correspondence between scattering passive systems with input $u$ and output $y$ and its corresponding impedance passive system with input $e$ and output $f$. The external Cayley transformation \eqref{ext_cayley} was also applied to transfer functions in Section~\ref{sec:transfer} which provides a relation between positive real and bounded real rational matrix-valued functions.

The external Cayley transformation can be generalized to relate two arbitrary quadratic supply rates of the form \eqref{def:supply}. The only requirement is that the number of positive (hence negative) eigenvalues of the two  Hermitian weight matrices has to coincide. The invertible, but in general not necessarily unitary transformation can in the general case be computed using the spectral decomposition of the weight matrices. In this paper, we do not consider general supply rates further, because we are mainly interested in the connection between impedance and scattering passive systems.

Note that the transformation matrix used in the external Cayley transformation \eqref{ext_cayley} is self-inverse, hence we can also apply the external Cayley transformation to turn an impedance passive system into a scattering passive system. 

In the following proposition, we describe the change of the representation when performing the external Cayley transform of impedance or scattering passive descriptor systems,  see \cite[Proposition 5.1]{StaW12} for a related result for (infinite-dimensional) standard state-space systems and also \cite{ReiS10} for a similar result using only the zero function as a storage function.
\begin{proposition}
\label{prop:impedance_to_scattering}
Suppose that a discrete-time descriptor system with coefficients $(E,A_I,B_I,C_I,D_I)$ is impedance passive and that $I_m+D_I$ is invertible. Then the system 
\[
\begin{bmatrix}
A_S&B_S\\C_S&D_S
\end{bmatrix}=\begin{bmatrix}
A_I-B_I(I_m+D_I)^{-1}C_I&\sqrt{2}B_I(I_m+D_I)^{-1}\\-\sqrt{2}(I_m+D_I)^{-1}C_I&-(I_m+D_I)^{-1}(D_I-I_m)
\end{bmatrix}
\]
is scattering passive and the set of solutions coincides, i.e.\ if $x_k,x_{k+1},u_k,y_k$ fulfill \eqref{discr_DAE} for all $k\geq 0$, then  $x_k,x_{k+1},e_k,f_k$ fulfill \eqref{discr_DAE} for the system given by $(E,A_S,B_S,C_S,D_S)$. Furthermore, every storage function associated with  $(E,A_I,B_I,C_I,D_I)$ is also a storage function of $(E,A_S,B_S,C_S,D_S)$. 
\end{proposition}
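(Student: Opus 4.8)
The plan is to realise the passage from the impedance system $(E,A_I,B_I,C_I,D_I)$ to the scattering system $(E,A_S,B_S,C_S,D_S)$ as the external Cayley transform \eqref{ext_cayley} applied to the input/output channel, while leaving the state $x_k$ and the matrix $E$ untouched. Writing the impedance dynamics as $Ex_{k+1}=A_Ix_k+B_Ie_k$ and $f_k=C_Ix_k+D_Ie_k$ with input $e_k$ and output $f_k$, I would first record that the transformation matrix in \eqref{ext_cayley} is self-inverse, as already noted in this section, so that $f_k=\tfrac{1}{\sqrt2}(-y_k+u_k)$ and $e_k=\tfrac{1}{\sqrt2}(y_k+u_k)$.

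The core of the argument is then purely algebraic. Substituting these expressions into the output equation $f_k=C_Ix_k+D_Ie_k$ and collecting the $y_k$ terms yields $-(I_m+D_I)y_k=\sqrt2\,C_Ix_k+(D_I-I_m)u_k$; here the invertibility of $I_m+D_I$ is exactly what is needed to solve for $y_k$, and it produces $C_S=-\sqrt2\,(I_m+D_I)^{-1}C_I$ and $D_S=-(I_m+D_I)^{-1}(D_I-I_m)$. Feeding this back into $e_k=\tfrac{1}{\sqrt2}(y_k+u_k)$ and using the identity $I_m-(I_m+D_I)^{-1}(D_I-I_m)=2(I_m+D_I)^{-1}$ gives $e_k=-(I_m+D_I)^{-1}C_Ix_k+\sqrt2\,(I_m+D_I)^{-1}u_k$; substituting this into the state equation $Ex_{k+1}=A_Ix_k+B_Ie_k$ then reads off $A_S=A_I-B_I(I_m+D_I)^{-1}C_I$ and $B_S=\sqrt2\,B_I(I_m+D_I)^{-1}$, matching the claimed formulas.

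The remaining two assertions are essentially immediate. Since the whole construction is a pointwise invertible change of the input/output variables that leaves $x_k$ and $Ex_{k+1}$ fixed, every trajectory $(x_k,e_k,f_k)$ of the impedance system corresponds bijectively via \eqref{ext_cayley} to a trajectory $(x_k,u_k,y_k)$ of $(E,A_S,B_S,C_S,D_S)$ with the same state sequence, which is the claimed coincidence of solution sets. For passivity I would invoke the supply-rate identity $s_{sca}(y_k,u_k)=s_{imp}(f_k,e_k)$ established earlier in this section: if $V$ satisfies the impedance dissipation inequality $V(Ex_{k+1})-V(Ex_k)\le s_{imp}(f_k,e_k)$ from Definition~\ref{def:disssyst}, then, because $V(Ex_{k+1})-V(Ex_k)$ is unchanged, the same $V$ satisfies $V(Ex_{k+1})-V(Ex_k)\le s_{sca}(y_k,u_k)$ along every corresponding trajectory, so the scattering system is scattering passive and every impedance storage function is a scattering storage function.

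I do not expect a genuine obstacle here: the only real content is the bookkeeping in the second paragraph, where one must keep track of the order of the pairs $(u_k,y_k)$ and $(e_k,f_k)$ and confirm that it is $I_m+D_I$, rather than $I_m-D_I$, that must be inverted. Once the supply-rate identity from the preceding computation is in hand, the passivity and solution-correspondence statements cost nothing beyond substitution.
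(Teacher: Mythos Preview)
Your proposal is correct and follows essentially the same approach as the paper: both arguments apply the external Cayley transform to the input/output pair, derive the formulas for $A_S,B_S,C_S,D_S$ by elementary algebra using the invertibility of $I_m+D_I$, and then transfer the storage function via the identity $s_{sca}(y,u)=s_{imp}(f,e)$. The only cosmetic difference is that the paper packages the algebra as row operations on a behavior-form block matrix, whereas you substitute directly into the state and output equations; the content is identical.
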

\begin{proof}
We express the impedance passive system with inputs $e_k$ and output $f_k$ in \emph{behavior form} 
\begin{align}\nonumber
\begin{bmatrix}Ex_{k+1}\\ 0\end{bmatrix}=\begin{bmatrix}A_I&B_I&0\\C_I&D_I&-I_m\end{bmatrix}\begin{bmatrix}x_k\\ e_k\\ f_k\end{bmatrix}&=\begin{bmatrix}A_I&B_I&0\\C_I&D_I&-I_m\end{bmatrix}\begin{bmatrix}
I_n&0&0\\0&\frac{1}{\sqrt{2}}I_m&\frac{1}{\sqrt{2}}I_m\\0&\frac{1}{\sqrt{2}}I_m&-\frac{1}{\sqrt{2}}I_m
\end{bmatrix}\begin{bmatrix}x_k\\ u_k\\ y_k\end{bmatrix}\\ \label{extcayley}
&=\begin{bmatrix}
A_I&\frac{1}{\sqrt{2}}B_I&\frac{1}{\sqrt{2}}B_I\\C_I&\frac{1}{\sqrt{2}}(D_I-I_m)&\frac{1}{\sqrt{2}}(D_I+I_m)
\end{bmatrix}\begin{bmatrix}x_k\\ u_k\\ y_k\end{bmatrix}.
\end{align}
Since $I_m+D_I$ is invertible, we can multiply the second block row of \eqref{extcayley} with $-\sqrt{2}(I_m+D_I)^{-1}$ without changing the solution set. This gives
\[
\begin{bmatrix}Ex_{k+1}\\ 0\end{bmatrix}=\begin{bmatrix}
A_I&\frac{1}{\sqrt{2}}B_I&\frac{1}{\sqrt{2}}B_I\\-\sqrt{2}(I_m+D_I)^{-1}C_I&-(I_m+D_I)^{-1}(D_I-I_m)&-I_m
\end{bmatrix}\begin{bmatrix}x_k\\ u_k\\ y_k\end{bmatrix}
\]
and we can then use block Gaussian elimination to eliminate the $(1,3)$ block entry of the first row again without changing the solution set and get
\[
\begin{bmatrix}Ex_{k+1}\\ 0\end{bmatrix}=\begin{bmatrix}
A_I-B_I(I_m+D_I)^{-1}C_I&\sqrt{2} B_I(I_m+D_I)^{-1}&0\\-\sqrt{2}(I_m+D_I)^{-1}C_I&-(I_m+D_I)^{-1}(D_I-I_m)&-I_m
\end{bmatrix}\begin{bmatrix}x_k\\ u_k\\ y_k\end{bmatrix},
\]
where we have used that $\frac{1}{\sqrt{2}} (I_m-(I_m+D_I)^{-1}(D_I-I_m))= \sqrt{2}(I_m+D_I)^{-1} $.

This is the desired state space representation of the system with inputs $u_k$ and outputs $y_k$. Since we did not change the solution set, it follows from $s_{sca}(y,u)=s_{imp}(f,e)$ and from (d-iPa) that also (d-sPa) holds with the same storage function. In particular, the system with coefficients $(E,A_S,B_S,C_S,D_S)$ is scattering passive.
\end{proof}

\begin{remark} \label{rem:notinv} {\rm We have seen that for a given impedance passive system, the external Cayley transform leads to a scattering passive system. However, the representation in Proposition~\ref{prop:impedance_to_scattering} can only be derived if $I_m+D_I$ is invertible.
For standard discrete-time state-space systems, it follows from considering the (2,2) entry of the block matrix in (d-iKYP) that $D_I+D_I^H\geq 0$ holds and therefore $I_m+D_I$ is invertible. 

An impedance passive descriptor system \eqref{discr_DAE} with singular $I_m+D$ is given by  $(E,A,B,C,D)=(0,1,-1,1,-1)$. Clearly, $1+D=0$ is singular, but the system fulfills $y_k=x_k-u_k=0$ and is, therefore, impedance passive with storage function $V(x)=0$ for all $x\in\dR$.  
}
\end{remark}
\begin{remark}\label{rem:nicerep}{\rm  
One may wonder why in the discrete-time case the scattering passive case was considered, while the continuous-time case is usually based on the concept of impedance passivity. The reason is that  the characterization in terms of the coefficient matrices is much nicer in the scattering case, while a characterization in the impedance passive case is based on a more complex relationship between the coefficients as demonstrated in the following proposition, see also Section~\ref{sec:cont_to_discr}.} 
\end{remark}
%In the following, we derive impedance passive representations %of scattering passive systems.
\begin{proposition}
\label{prop:scat2imp}
Suppose that a discrete-time descriptor system with coefficients $(E,A_S,B_S,C_S,D_S)$ is scattering passive. If $I_m+D_S$ is invertible then the system 
\begin{align}
\label{eq:impfromscat}
\begin{bmatrix}
A_I&B_I\\C_I&D_I
\end{bmatrix}=\begin{bmatrix}
A_S-B_S(I_m+D_S)^{-1}C_S&\sqrt{2}B_S(I_m+D_S)^{-1}\\-\sqrt{2}(I_m+D_S)^{-1}C_S&-(I_m+D_S)^{-1}(D_S-I_m)
\end{bmatrix}
\end{align}
is impedance passive.

If $\ker(I_m+D_S)\neq\{0\}$ and if there exists  $X=X^H>0$ satisfying \emph{(d-sKYP)} then with $P_{\ker (I_m+D)^\perp}$ denoting a projector on the orthogonal complement ${\ker (I_m+D_S)^\perp}$ of $\ker (I_m+D_S)$, the restricted system 
\begin{align}
\label{red_scat}
\begin{bmatrix}\widehat{A}_S& \widehat{B}_S\\ \widehat{C}_S& \widehat{D}_S\end{bmatrix}:=\begin{bmatrix}A_S&B_S\vert_{\ker (I_m+D_S)^{\perp}}\\ C_S&P_{\ker (I_m+D_S)^{\perp}}D_S\vert_{\ker (I_m+D_S)^{\perp}}\end{bmatrix}
\end{align}
is scattering passive  and  using an external Cayley transformation gives an impedance passive system of the form  \eqref{eq:impfromscat}.
\end{proposition}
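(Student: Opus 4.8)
The plan is to treat the two assertions of Proposition~\ref{prop:scat2imp} separately, since the first is a direct dualization of Proposition~\ref{prop:impedance_to_scattering} and the second handles the degenerate case $\ker(I_m+D_S)\neq\{0\}$. For the first assertion, I would observe that the transformation matrix in the external Cayley transform \eqref{ext_cayley} is self-inverse and unitary, and that it maps $s_{sca}$ to $s_{imp}$ and vice versa. Hence I would mirror the behavior-form argument from Proposition~\ref{prop:impedance_to_scattering}: writing the scattering passive system with inputs $u_k$, outputs $y_k$ in behavior form, substituting the self-inverse transformation to pass to the variables $e_k,f_k$, and then using the invertibility of $I_m+D_S$ to normalize the second block row and perform block Gaussian elimination on the $(1,3)$ entry. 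The formulas in \eqref{eq:impfromscat} have exactly the same shape as those in Proposition~\ref{prop:impedance_to_scattering} with the roles of the impedance and scattering data interchanged, which is consistent with the self-inverse nature of the transform, so the computation should close in the same way and yield $s_{imp}(f,e)=s_{sca}(y,u)$, giving (d-iPa) from (d-sPa) with the same storage function.

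The main obstacle is the second assertion, where $I_m+D_S$ is singular and the elimination step above breaks down. Here the key idea is that a scattering passive system cannot use the directions in $\ker(I_m+D_S)$ nontrivially, so one can restrict the input space to $\ker(I_m+D_S)^\perp$ without losing scattering passivity. First I would exploit the hypothesis that (d-sKYP) holds for some $X=X^H>0$: by Proposition~\ref{prop:pHandKYP}, after the congruence $X^{1/2}$ the full system block $\begin{smallbmatrix}\widetilde A&\widetilde B\\\widetilde C&D_S\end{smallbmatrix}$ has norm at most one. The crucial step is to show that vectors in $\ker(I_m+D_S)$ are benign: if $(I_m+D_S)v=0$ then $D_Sv=-v$, so $\|D_Sv\|=\|v\|$, which combined with the contraction property $\|\widetilde C x+D_S u\|^2+\|\widetilde A x+\widetilde B u\|^2\le\|x\|^2+\|u\|^2$ forces the off-diagonal coupling into these directions to vanish. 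This is the analogue of the standard fact that a contraction attaining norm one on a subspace acts as an isometry there and decouples it from its orthogonal complement.

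Once this decoupling is established, I would verify that the restricted system \eqref{red_scat}, obtained by compressing $B_S$ and $D_S$ to $\ker(I_m+D_S)^\perp$ via the projector $P_{\ker(I_m+D_S)^\perp}$, again satisfies a (d-sKYP) inequality with the same $X$, hence is scattering passive by Corollary~\ref{cor: kyptoph} (or directly by Proposition~\ref{prop:pHandKYP}). The point is that the compressed feedthrough $\widehat D_S=P_{\ker(I_m+D_S)^\perp}D_S|_{\ker(I_m+D_S)^\perp}$ now satisfies $I_m+\widehat D_S$ invertible on the reduced input space, because the singular directions have been removed by construction. With $I_m+\widehat D_S$ invertible, the first assertion of the proposition applies to the restricted system, and the external Cayley transform \eqref{eq:impfromscat} produces an impedance passive system as claimed.

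The step I expect to cause the most trouble is the rigorous justification that the directions in $\ker(I_m+D_S)$ genuinely decouple, i.e.\ that $\widetilde B$, $\widetilde C$, and the cross terms of $D_S$ annihilate or are annihilated by $\ker(I_m+D_S)$, so that the compression to $\ker(I_m+D_S)^\perp$ loses no information and preserves the (d-sKYP) inequality. This requires carefully extracting, from the semidefiniteness of the $2\times 2$ block matrix in (d-sKYP), the consequence that the Schur-type coupling between the kernel directions and the remaining directions must vanish; the contraction attains its operator norm exactly along $\ker(I_m+D_S)$, and one must argue that this saturation forces the relevant blocks to zero. Everything else is routine linear algebra mirroring the proof of Proposition~\ref{prop:impedance_to_scattering}.
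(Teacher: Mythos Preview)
Your proposal is correct and takes essentially the same route as the paper. For the first assertion the paper explicitly says the argument is analogous to Proposition~\ref{prop:impedance_to_scattering}, exactly as you plan. For the second assertion the paper also derives the key kernel inclusion $\ker(I_m+D_S)\subseteq\ker B_S\cap\ker C_S^H$ directly from (d-sKYP) with $X>0$; this is precisely your ``decoupling'' step, just phrased via the matrix inequality rather than the equivalent contraction language of Proposition~\ref{prop:pHandKYP}. The only real difference is how scattering passivity of the restricted system is concluded: the paper decomposes $u_k=u_k^1+u_k^2$ orthogonally along $\ker(I_m+D_S)^\perp\oplus\ker(I_m+D_S)$, chooses $u_k^2=\tfrac12 PD_Su_k^1$ so that $\|u_k^2\|=\|y_k^2\|$, and reads off $\|u_k^1\|^2-\|y_k^1\|^2=\|u_k\|^2-\|y_k\|^2$ from the full dissipation inequality, whereas you would verify the compressed (d-sKYP) directly. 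Both arguments are valid once the kernel inclusion is in hand. One point both treatments leave implicit is the invertibility of $I+\widehat D_S$ on the reduced input space; this follows because the $(2,2)$ block of (d-sKYP) forces $\|D_S\|\leq 1$, so $-1$ is a unimodular eigenvalue of a contraction, whence $\ker(I_m+D_S)=\ker(I_m+D_S^H)$ and $D_S$ preserves $\ker(I_m+D_S)^\perp$.
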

%Hence, if $I_m+D$ is invertible we have to consider it on ?some complement space? of $\ker (I_m+D)$ which leads to an invertible operator. 
\begin{proof}
The proof that \eqref{eq:impfromscat} is the state space representation of the external Cayley transformed system follows analogous to the proof of Proposition~\ref{prop:impedance_to_scattering}. 

Next, we show the  inclusion
\begin{align}
    \label{kerID_incl}
\ker (I_m+D_S)\subseteq \ker XB_S\cap\ker C_S^H=\ker B_S\cap\ker C_S^H.
\end{align}
Let $v \in\ker (I_m+D_S)$. Then $D_S v=- v$ holds, and  considering the lower diagonal entry in (d-sKYP) implies that $-B_S^HXB_S v=0$ holds and therefore $v \in\ker XB_S$. Multiplying (d-sKYP) with $\begin{smallbmatrix}
0\\v
\end{smallbmatrix}$ from the right and its conjugate-transpose from the left implies $v \in\ker C_S^H$. This proves \eqref{kerID_incl}.
 
We decompose $u_k=u_k^1+u_k^2$ with $u_k^2\in\ker (I_m+D_S)$ and $u_k^1\in\ker (I_m+D_S)^\perp$ and denote the orthogonal projection onto $\ker (I_m+D_S)$ by $P$.  Then we can write the system equivalently as 
\begin{align*}
Ex_{k+1}=A_Sx_k+B_Su_k^1,\\
y_k^1=C_Sx_k+(I_m-P)D_Su_k^1, \\
y^2_k=PD_Su_k^1-u_k^2,
\end{align*}
where $y^1\in \ker(I_m+D_S)^\perp$ and $y^2\in \ker(I_m+D_S)$ by construction. 

If we choose
\[
u_k^2=\tfrac{1}{2}PD_Su_k^1
\]
then
\[
\|u_k^2\|^2=\|PD_Su_k^1-u_k^2\|^2=\|y_k^2\|^2
\]
and,  together with the orthogonality, we obtain
\[
\|u_k^1\|^2-\|y_k^1\|^2= \|u_k^1\|^2+\|u_k^2\|^2-\|y_k^1\|^2-\|y_k^2\|^2=\|u_k\|^2-\|y_k\|^2.
\]
Hence the reduced system defined in \eqref{red_scat} is scattering passive and has the property that 
\[
\ker(I_{\dim\ker(I_m+D_s)^\perp}+\widehat D_S)=\{0\}
\]
and therefore we can apply the external Cayley transformation to construct an impedance passive system.
\end{proof}

The external Cayley transform of a scattering passive descriptor system leads to an impedance passive descriptor system. However, the standard state-space representation of the resulting impedance passive system can in general only be obtained if $I+D_S$ is invertible as the following example shows. 
\begin{example}
    Consider e.g.\ the system $x_{k+1}=\tfrac12x_k$ with output equation $y_k=-u_k$, i.e.\ $D=-I_m$. Then this system is scattering passive with storage function $V(x)=\|x\|^2$. Applying the external Cayley transform, we obtain a system without output variables and $u_k=0$ for all $k\geq 0$ which is no longer a classical control system. 
\end{example}
We conclude this section with a brief discussion of realizations of positive (bounded) real transfer functions. Recall that if there exists a discrete-time system of the form \eqref{discr_DAE} with $E,A\in\mathbb{K}^{n\times n}$, $B,C^H\in\mathbb{K}^{n\times m}$ and $D\in\mathbb{K}^{m\times m}$ that has the transfer function $\Tc(z)=C(zE-A)^{-1}B+D$, then this is called a realization of $\Tc$. 

By definition, the transfer function of a bounded real system $\Tc$ is bounded, and therefore there exists a realization of index at most one \cite[Section 6]{FreJ04}. It follows from \cite[Theorem 6.3]{FreJ04} that minimal realizations of bounded real transfer functions can always be rewritten equivalently as a standard discrete-time state-space system. 
For systems that have index at most one, this can also be seen easily by \eqref{dtoE}.

This construction of realizations can also be used to perform an index reduction of an impedance passive system as follows: First, perform an external Cayley transform of the system. Then, the transfer function of the transformed system is bounded real. Hence, a minimal realization of this system can be expressed as a scattering passive standard discrete-time state-space system. If required, an external Cayley transform can be applied to obtain an impedance passive standard discrete-time state-space system. This may, however, require a  restriction of the input space, see Proposition~\ref{prop:scat2imp}.

In this section, we have shown how to relate impedance and scattering passive systems via the external Cayley transformations, and that under the extra condition that $I+D$ is invertible, these two passivity conditions are equivalent. Hence, one can define an impedance passive discrete-time pH system via the external Cayley transform and the connection on matrices in Definition~\ref{def:discpH}.

\section{From continuous-time to discrete-time dissipative systems}
\label{sec:cont_to_discr}

In this section, we explore how the passivity properties for a continuous-time system are transferred to a corresponding discretized system.

We show in particular that the so-called internal Cayley transform, corresponding to the trapezoidal rule (or implicit mid-point discretization method for linear systems), will lead to dissipative discrete-time systems which are equivalent to a scattering pH representation.  To proceed, we first recall for the continuous-time case the notions of bounded realness, scattering passivity, and the solution of the corresponding KYP inequalities, see e.g. \cite{CheGH22}.
\begin{itemize}
\setlength{\itemindent}{2em}
\item[\rm (pH)] \label{gl:pH} A continuous-time system of the form \eqref{cont_DAE} is called \emph{port-Hamiltonian} if there exists  $J,R,X\in\K^{n\times n}$, $G,P\in\K^{n\times m}$, and  $S,N\in\K^{m\times m}$ such that 
\begin{align}
\label{def_PH}
\begin{split}
\begin{bmatrix}
A&B\\C&D
\end{bmatrix}&=\begin{bmatrix}(J-R)X&G-P\\(G+P)^HX&S+N\end{bmatrix},\quad X^HE=E^HX\geq 0,\\
	\Gamma &:= \begin{bmatrix}
		J & G \\
		-G^H& N\end{bmatrix}
		= - \Gamma^H,\quad 
 W := \begin{bmatrix}
	X^HRX & X^HP\\
	P^HX & S
\end{bmatrix} =W^H \geq 0.
\end{split}
\end{align}
with the Hamiltonian $\Hc(x) := \frac{1}{2} x^H E^H X x$.
\item[(sPa)] \label{gl:sPa} A continuous-time system of the form \eqref{cont_DAE} is said to be \emph{scattering passive} if it is passive with the supply rate $s_{sca}(u,y)=\|u\|^2-\|y\|^2$. 

\item[(iPa)] \label{gl:iPa} A continuous-time system of the form \eqref{cont_DAE} is said to be \emph{impedance passive} if it is passive with the supply rate $s_{imp}(u,y)=2\Re(y^Hu)$.

\item[(BR)] \label{gl:BR}
A continuous-time system of the form \eqref{cont_DAE} is said to be \emph{bounded real} if the following conditions hold, see \cite{BroLME20}, if 
\begin{itemize}
\setlength{\itemindent}{2em}
\item[\rm (i)] $\mathcal{T}(s)$ analytic for all $\Re(s)>0$,
\item[\rm (ii)] $\Tc(s)$ is real for all $s\in(0,\infty)$,
\item[\rm (iii)] $\|\Tc(s)\|\leq 1$ for all $\Re(s)>0$.
\end{itemize}

\item[(PR)] \label{gl:PR}
A continuous-time system of the form \eqref{cont_DAE} is said to be \emph{positive real} if the following conditions hold, see \cite{BroLME20}, if  $\mathcal{T}(s)$ analytic and if $\Tc(s)+\Tc(s)^H\geq 0$ holds for all $\Re(s)>0$.

\item[] \hspace{-1cm} {\rm (sKYP)} \label{gl:sKYP}
%\cite{GriG15} 
There exist $X=X^H\geq0$  such that the \emph{scattering KYP} is of the form
\[
 \begin{bmatrix}
-A^H X-XA-C^H C&-XB-C^H D\\-B^H X-D^H C&I_m-D^H D
\end{bmatrix}\geq 0.
\]
\item[] \hspace{-1cm} {\rm (iKYP)} \label{gl:iKYP}
There exist $X=X^H\geq0$  such that the \emph{impedance KYP} is of the form
\[
 \begin{bmatrix}
-A^H X-XA&C^H-XB \\C-B^H X&D+D^H
\end{bmatrix}\geq 0.
\]
\end{itemize}
The relations between (BR), (sPa) and (sKYP) for standard state-space systems have been discussed extensively in the literature \cite{BroLME20,GriG15,Sta03}.  Recently, the relations between (PR), (iPa), (iKYP) and continuous-time pH descriptor systems are summarized in \cite{CheGH22} and displayed in Figure~\ref{fig:overview2}.

\begin{figure}
    \centering
    %\scalebox{0.78}{
\begin{tikzpicture}
        \node[block] (a) {(pH)};
        \node[block, below =2cm of a]   (b){(iKYP)};
        %\node[block, right =2cm of b]   (c){(KYP$\mid\Vs$)};
        \node[block, right =2cm of b]   (d){(iPa)};
        \node[block, above =2cm of d]   (e){(PR)};
        %\node[block, above right = 0.2cm and 0.5cm of c ]   (f){finite\\available\\storage};
        
        \draw[->,semithick,double,double equal sign distance,>=stealth, color=blue] ([xshift=-2ex]a.south) -- ([xshift=-2ex]b.north);
        \draw[->,semithick,double,double equal sign distance,>=stealth] ([xshift=2ex]b.north) -- ([xshift=2ex]a.south) node[midway,right = 2 ex]{(a)};
        \draw[->,semithick,double,double equal sign distance,>=stealth, color=blue] ([yshift=-2ex]b.east) -- ([yshift=-2ex]d.west);
        \draw[->,semithick,double,double equal sign distance,>=stealth] ([yshift=2ex]d.west) -- ([yshift=2ex]b.east) node[midway,above = 2 ex]{(b)};
        %\draw[<->,semithick,double,double equal sign distance,>=stealth, color=blue] (c.east) -- (d.west)node[midway,below = 2 ex]{};
        %\draw[<->,semithick,double,double equal sign distance,>=stealth,color=blue] (c.east) -| (f.south)node[midway,above right = 2 ex and 2ex]{};
         %\draw[-,semithick, white,line width=1.4pt, shorten >= 7pt] ([xshift=2ex]c.east) -- (d.west);
        %\draw[-,semithick, white,line width=1.4pt, shorten >= 7pt] ([xshift=2ex]c.east) -| (f.south);
        %\draw[->,semithick,double,double equal sign distance,>=stealth,negated, color=red] ([xshift=-2ex]e.south) -- ([xshift=-2ex]d.north)node[midway,left = 2 ex]{Ex.~\ref{ex:obsv!notcontr}~~};
        \draw[->,semithick,double,double equal sign distance,>=stealth, color=blue] ([xshift=2ex]d.north) -- ([xshift=2ex]e.south) ;
        \draw[->,semithick,double,double equal sign distance,>=stealth, color=black] ([xshift=-2ex]e.south) -- ([xshift=-2ex]d.north) node[midway, left = 2ex] {(c)};
        \draw[->,semithick,double,double equal sign distance,>=stealth] ([yshift=2ex]e.west) -- ([yshift=2ex]a.east)node[midway,above = 2 ex]{minimal};
        %\draw[->,semithick,double,double equal sign distance,>=stealth, color=black] ([yshift=-2ex]e.west) -| (c.north)node[midway,above right= 0.5ex and 3ex]{beh. controllable~~};
        %\draw[->,semithick,double,double equal sign distance,>=stealth] (d.south) |-  ++(0,-10pt) coordinate[yshift=-1.7cm,](r){} -| (b.south)node[midway,below right= 2 ex and 6cm]{(2)};
        %\draw[->,semithick,double,double equal sign distance,>=stealth] (b.west) --  ++(-10pt,0) coordinate[yshift=-1.7cm,](r){} |- (a.west)node[midway,below left= 1.5cm and 1ex]{(1)};
        \end{tikzpicture}
        %}
       \caption{Overview of the relationship between (pH), (iKYP), (iPa) and (PR) for continuous-time descriptor systems with $(E,A)$ regular and $E\neq0$. The implications with additional assumptions are colored black and the ones without in blue. 
       \\
       (a) $\ker X \subseteq \ker C\cap\ker A$, 
       (b) index at most one, 
       (c) behaviorally controllable.}
    \label{fig:overview2}
\end{figure}
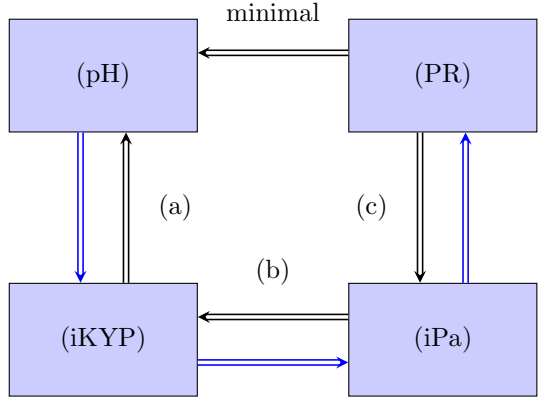

Next, the well-known \emph{Tustin discretization} (also called trapezoidal rule or implicit midpoint rule for linear systems) which was described for standard state-space systems in \cite[Section 3]{FraPW98} is applied to continuous-time descriptor systems. The following considerations are based on \cite[Section 5.1]{KurS07} for standard state-space systems and the more general \emph{input-state-output systems}, see also \cite{Meh96} for a detailed analysis. 

For a discretization step-size $h\in(0,\infty)$ consider the equidistant time grid $\{t_k\}_{k\geq 0}$ of $[0,\infty)$ with $t_k=kh$. Then for the continuous-time  descriptor system \eqref{cont_DAE} in $(t_k,t_{k+1})$ we obtain
\[
Ex(t_{k+1})-Ex(t_k)=\int_{t_k}^{t_{k+1}} \tfrac{d}{dt} Ex(\tau)\, d\tau=\int_{t_k}^{t_{k+1}}Ax(\tau)+Bu(\tau)\, d\tau.
\]
Approximating the integral by the trapezoidal rule results in 
\[
Ex(t_{k+1})-Ex(t_k)=\frac{h}{2}(Ax(t_{k+1})+Bu(t_{k+1})+Ax(t_{k})+Bu(t_k)).
\]
If $(E,A)$ is regular, then for sufficiently small $h>0$ one has $\tfrac{h}{2}\in\rho(E,A)$. This allows us to multiply the last equation with the resolvent from the left and obtain
\[
x(t_{k+1})=(\tfrac{2}{h}E-A)^{-1}(\tfrac{2}{h}E + A)x(t_k) +(\tfrac{2}{h}E-A)^{-1}B(u(t_{k+1})+u(t_k)).
\]
This can be used to define a \emph{discrete-time} system with state sequence $\{x_k\}$ and input sequence $\{u_k\}$, where $x_k\approx x(t_k)$ for all $k\geq 0$ and, using $\alpha:=\tfrac{2}{h}$, 
\begin{align}
\label{discrete_without_y}
\begin{split}
x_{k+1}&=(\alpha E-A)^{-1}(\alpha E+A)x_k+\sqrt{2\alpha}(\alpha E-A)^{-1}Bu_k, \\ u_k&:=\frac{u(t_{k+1})+u(t_k)}{\sqrt{2\alpha}}.
\end{split}
\end{align}

For  continuous-time standard state-space systems with coefficients $(I_n,A,B,C,D)$, the discrete-time system \eqref{discrete_without_y} can be extended by an additional output equation in a passivity preserving way. The resulting discrete-time system is often called \emph{internal Cayley transformation} of the continuous-time system and is for some $\alpha\in \dC_+\cap\rho(A)$   given by 
\begin{align}
\label{def:int_cayley}
\begin{bmatrix}
\textbf{A}&\textbf{B}\\\textbf{C}&\textbf{D}
\end{bmatrix}:=\begin{bmatrix}
(\alpha I_n -A)^{-1}(\overline{\alpha}I_n+A)&\sqrt{2\Re(\alpha)}(\alpha I_n-A)^{-1}B \\ \sqrt{2\Re(\alpha)}C(\alpha I_n-A)^{-1}&\Tc(\alpha)
\end{bmatrix},
\end{align}
where $\Tc(\alpha)=C(\alpha I_n-A)^{-1}B+D$ is the transfer function of the system. An overview of results for the internal Cayley transform applied to (possibly infinite dimensional) standard state-space systems is given e.g.\ in \cite[Section 4]{StaW12}. Furthermore, it is easy to see that the internal Cayley transform preserves the controllability and observability of a given continuous-time system, i.e.\ if $(A,B)$ is controllable (resp.\ $(A,C)$ observable) then $(\mathbf{A},\mathbf{B})$ is controllable (resp.\ $(\mathbf{A},\mathbf{C})$ observable).

Furthermore, the transfer function of the resulting discrete-time system is given by 
\[
\textbf{C}(zI_n-\textbf{A})^{-1}\textbf{B}+\textbf{D}=\Tc(\tfrac{\alpha z-\overline{\alpha}}{z+1}),\quad \vert z\vert>1.
\]
This follows from
\begin{align*}
&~~~~\textbf{C}(zI_n-\textbf{A})^{-1}\textbf{B}+\textbf{D}\\&=2\Re(\alpha) C(\alpha I_n-A)^{-1}(z I_n-(\alpha I_n-A)^{-1}(\overline{\alpha}I_n+A))^{-1}(\alpha I_n-A)^{-1}B+\mathcal{T}(\alpha)\\&=2\Re(\alpha) C(\alpha I_n-A)^{-1}(z (\alpha I_n-A)-(\overline{\alpha}I_n+A))^{-1}B+\mathcal{T}(\alpha)\\
&=\Tc(\tfrac{\alpha z-\overline{\alpha}}{z+1}),
\end{align*}
where in the last step we have used  that 
\begin{align*}
&~~~~\frac{2\Re(\alpha)}{z+1}(\alpha I_n-A)^{-1}\left(\frac{z\alpha-\overline{\alpha}}{z+1} I_n-A\right)^{-1}\\&=\frac{2\Re(\alpha)}{z+1}\left(\alpha-\frac{z\alpha-\overline{\alpha}}{z+1}\right)^{-1}\left(\left(\frac{z\alpha-\overline{\alpha}}{z+1}I_n-A\right)^{-1}-(\alpha I_n-A)^{-1}\right).
\end{align*}
Hence, the transfer function of the discretized system fulfills (d-PR) (resp.\ (d-BR)) if and only if the transfer functions of the continuous-time system fulfill (PR) (resp.\ (BR)). 

%\VMcomment{Difficulties between %continuous and discrete-time were %studied in \cite{Meh96}}

The following result was obtained for the special case of systems that have $X=I_n$ as a solution to (sKYP) in \cite[Proposition 4.3]{StaW12}. 
\begin{proposition}\label{prop:cont_to_discrete}
Consider a standard continuous-time state-space systems with coefficients $(I_n,A,B,C,D)$ that has a solution $0\leq X=X^H$ to \emph{(sKYP)} (resp.\ \emph{(iKYP)}) and let $\alpha\in\rho(E,A)\cap\dC_+$. Then the discrete-time system $(I_n,\mathbf{A},\mathbf{B},\mathbf{C},\mathbf{D})$ given by \eqref{def:int_cayley} is scattering (resp.\ impedance) passive with storage function $V(x)=\tfrac12x^HXx$. 
\end{proposition}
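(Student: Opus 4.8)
The plan is to prove that the \emph{same} Hermitian matrix $X\ge 0$ that solves the continuous-time inequality (sKYP) (resp.\ (iKYP)) also solves the discrete-time inequality (d-sKYP) (resp.\ (d-iKYP)) for the internal Cayley transform $(I_n,\mathbf A,\mathbf B,\mathbf C,\mathbf D)$ from \eqref{def:int_cayley}; scattering (resp.\ impedance) passivity with the quadratic storage function $V(x)=\tfrac12 x^H X x$ then follows at once from Proposition~\ref{prop:KYPthenPa}, and $V$ is nonnegative with $V(0)=0$ because $X\ge0$. Writing $R:=(\alpha I_n-A)^{-1}$ and $\beta:=\sqrt{2\Re(\alpha)}$ (so $\Re(\alpha)>0$ makes the transform nondegenerate), the heart of the matter is a single explicit congruence. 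Setting $\mathbf M:=\begin{bmatrix}\mathbf A&\mathbf B\\ \mathbf C&\mathbf D\end{bmatrix}$ and $\Sigma:=\begin{bmatrix}X&0\\0&I_m\end{bmatrix}$, the discrete scattering KYP matrix is exactly $\Sigma-\mathbf M^H\Sigma\,\mathbf M$, and I claim that with
\[
T:=\begin{bmatrix}\beta R & RB\\ 0 & I_m\end{bmatrix}
\]
one has $\Sigma-\mathbf M^H\Sigma\,\mathbf M=T^H W_{\mathrm{sca}}\,T$, where $W_{\mathrm{sca}}$ is the continuous scattering KYP matrix; an entirely analogous identity $W^{\mathrm d}_{\mathrm{imp}}=T^H W_{\mathrm{imp}}\,T$ holds in the impedance case with the \emph{same} $T$. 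Since $T$ is invertible (as $R$ is and the diagonal blocks are invertible), $W_{\mathrm{sca}}\ge0$ forces the left-hand side to be positive semidefinite, which is precisely (d-sKYP).

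First I would establish the scalar building block coming from the Cayley transform of $A$ alone. Using $\mathbf A=R(\overline\alpha I_n+A)=-I_n+\beta^2 R$ together with $R^{-1}=\alpha I_n-A$, a short expansion yields the Lyapunov identity
\[
X-\mathbf A^H X\mathbf A=\beta^2\,R^H\bigl(-A^H X-XA\bigr)R,
\]
which, after subtracting $\mathbf C^H\mathbf C=\beta^2 R^H C^H C\,R$, already reproduces the $(1,1)$ block in both the scattering and impedance cases. I would then verify the off-diagonal and $(2,2)$ blocks by direct block-matrix expansion. The one nontrivial cancellation used throughout is the resolvent identity $AR=\alpha R-I_n$ (equivalently $A(\alpha I_n-A)^{-1}=\alpha(\alpha I_n-A)^{-1}-I_n$), together with $(\overline\alpha I_n+A)^H=-R^{-H}+\beta^2 I_n$; these collapse the mixed terms $R^H A^H X R$ and $R^H XA R$ into $X RB-\beta^2 R^H X R B$, which is exactly what the $(1,2)$ block of the transformed matrix requires.

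A pleasant feature I would emphasize is that the \emph{same} $T$ handles the impedance case verbatim: the matrices $C$ and $D$ enter the off-diagonal and $(2,2)$ computations only through the combinations $R^H C^H$, $CRB$, $D+D^H$ (impedance) or $\mathbf C^H\mathbf C$, $\mathbf D^H\mathbf D$ (scattering), all of which transform correctly once the $X$-dependent terms do; so after checking the $X$-terms via the resolvent identity above, the input/output and feedthrough terms match automatically, and no separate argument is needed beyond swapping the supply-rate weight. Because $T$ is invertible the two semidefiniteness conditions are in fact equivalent, but only the direction $W_{\mathrm{sca}}\ge0\Rightarrow\Sigma-\mathbf M^H\Sigma\mathbf M\ge0$ is needed here.

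The main obstacle is purely computational rather than conceptual: discovering the correct congruence $T$ and then pushing the $(n+m)\times(n+m)$ block algebra through. The delicate point is that $X$ need not commute with $A$, so the cross terms between the resolvent $R$ and the weight $X$ cannot be simplified by any spectral or functional-calculus argument; they must be reduced purely algebraically through $AR=\alpha R-I_n$ and the conjugate relation for $(\overline\alpha I_n+A)^H$. Once the congruence is verified, the conclusion is immediate: (d-sKYP) (resp.\ (d-iKYP)) holds for $(I_n,\mathbf A,\mathbf B,\mathbf C,\mathbf D)$ with the given $X\ge0$, and Proposition~\ref{prop:KYPthenPa} delivers scattering (resp.\ impedance) passivity with storage function $V(x)=\tfrac12 x^H X x$.
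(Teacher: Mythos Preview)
Your proposal is correct and follows essentially the same route as the paper: the paper uses exactly the congruence matrix $T=\begin{bmatrix}\sqrt{2\Re(\alpha)}\,(\alpha I_n-A)^{-1} & (\alpha I_n-A)^{-1}B\\ 0 & I_m\end{bmatrix}$ you wrote down, and verifies block by block (first the $X$-dependent ``without costs'' part, then the supply-rate part) that $T^H W_{\mathrm{sca}}T$ (resp.\ $T^H W_{\mathrm{imp}}T$) equals the discrete-time KYP matrix for $(I_n,\mathbf A,\mathbf B,\mathbf C,\mathbf D)$, whence (d-sKYP) (resp.\ (d-iKYP)) and passivity via Proposition~\ref{prop:KYPthenPa}. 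Your packaging as the single identity $\Sigma-\mathbf M^H\Sigma\mathbf M=T^H W_{\mathrm{sca}}T$ is just the sum of the paper's two pieces, and your observation that invertibility of $T$ gives the converse direction is a bonus the paper does not state.
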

\begin{proof}
The idea of the proof is based on \cite{KurS07} where  a congruence transformation with
\begin{align}
    \label{congruence}
T:=\begin{bmatrix}
  \sqrt{2\Re(\alpha)}  (\alpha I_n-A)^{-1}&  (\alpha I_n-A)^{-1}B\\0&I_m
\end{bmatrix}
\end{align}
is used.
As a first step, we show the following equality
\begin{align}
\label{eq:without_costs}
T^H
 \begin{bmatrix}
-A^H X-X A&-XB\\-B^H X&0
\end{bmatrix}T=\begin{bmatrix}
-\mathbf{A}^HX\mathbf{A}+X&-\mathbf{A}^HX\mathbf{B}\\-\mathbf{B}^H X\mathbf{A}&-\mathbf{B}^HX\mathbf{B}
\end{bmatrix}.
\end{align}
The left hand side of \eqref{eq:without_costs} can be rewritten as
\begin{align}
\label{eq:TH_left}
 T^H\begin{smallbmatrix}
-\sqrt{2\Re(\alpha)}(A^H X+XA) (\alpha I_n-A)^{-1}&-(A^H X+XA) (\alpha I_n-A)^{-1}B-XB\\\sqrt{2\Re(\alpha)}B^HX(\alpha I_n-A)^{-1}&-B^HX(\alpha I_n-A)^{-1}B
\end{smallbmatrix}.
\end{align}
The result of the multiplication with $T^H$ from the left in \eqref{eq:TH_left} is considered for each block entry separately. The (1,1) entry of the resulting block matrix in \eqref{eq:TH_left} is given by 
\begin{align*}
&~~~~2\Re(\alpha) (\alpha I_n-A)^{-H}( -XA -A^HX)(\alpha I_n-A)^{-1}\\
&=(\alpha I_n-A)^{-H}( -\alpha XA -\overline{\alpha}A^HX-\overline{\alpha}XA-\alpha A^HX)(\alpha I_n-A)^{-1}\\
&=(\alpha I_n-A)^{-H}(-(\alpha I_n+A^H)X(\overline{\alpha}I_n+A)+(\alpha I_n-A)^{H}X(\alpha I_n-A))(\alpha I_n-A)^{-1}\\
&=-\mathbf{A}^HX\mathbf{A}+X,
\end{align*}
which proves \eqref{eq:without_costs} for the (1,1) entries.

Furthermore, the (1,2) entry of the resulting block matrix in \eqref{eq:TH_left} equals
\begin{align*}
&~~~~\sqrt{2\Re(\alpha)}(\alpha I_n-A)^{-H}((-A^H X-XA)(\alpha I_n-A)^{-1}B-XB)\\
&=\sqrt{2\Re(\alpha)}(\alpha I_n-A)^{-H}((-A^H X-XA-X(\alpha I_n-A))(\alpha I_n-A)^{-1}B)\\
&=-(\alpha I_n-A)^{-H}(\overline{\alpha}I_n+A)^H X \mathbf{B}\\
&=-\mathbf{A}^HX\mathbf{B}.
\end{align*}
Hence, \eqref{eq:without_costs} holds for the (1,2) entries of the block matrices. Since both matrices in \eqref{eq:without_costs} are Hermitian, the (2,1) entries coincide as well. It remains to show equality of the (2,2) entries 
\begin{align*}
&B^H(\alpha I_n-A)^{-H}(-(A^H X+XA) (\alpha I_n-A)^{-1}B-XB)-B^HX(\alpha I_n-A)^{-1}B\\&=B^H(\alpha I_n-A)^{-H}(-(A^H X+XA+X(\alpha I_n-A)) (\alpha I_n-A)^{-1}B)\\ & ~~~~-B^HX(\alpha I_n-A)^{-1}B \\
&=B^H(\alpha I_n-A)^{-H}(-(A^H X+X\alpha) (\alpha I_n-A)^{-1}B)\\&~~~~-B^HX(\alpha I_n-A)^{-1}B\\
&=-2\Re(\alpha) B^H(\alpha I_n-A)^{-H}X(\alpha I_n-A)^{-1}B
\\
&=-\mathbf{B}^HX\mathbf{B}.
\end{align*}
In summary, this proves \eqref{eq:without_costs}. In the second step we use the transformation \eqref{congruence} to obtain 
\begin{align*}
&~~~~T^H\begin{bmatrix}
    0& C^H\\ C & D+D^H
\end{bmatrix}T\\ &=\begin{bmatrix}
  \sqrt{2\Re(\alpha)}  (\alpha I_n-A)^{-1}&  (\alpha I_n-A)^{-1}B\\0&I_m
\end{bmatrix}^H\begin{bmatrix}
    0&C^H\\\sqrt{2\Re(\alpha)}C(\alpha I_n-A)^{-1}&\Tc(\alpha)+D^H
\end{bmatrix}\\
&=\begin{bmatrix}
    0&\mathbf{C}^H\\\mathbf{C}&\mathbf{D}+\mathbf{D}^H
\end{bmatrix}.
\end{align*}
Since the congruence transformation with $T$ preserves positive semidefiniteness, we conclude that if (iKYP) holds for $X=X^H\geq 0$, then $X$ is also a solution to (d-iKYP) for the system $(I_n,\mathbf{A},\mathbf{B},\mathbf{C},\mathbf{D})$. 

Analogously, we use the transformation \eqref{congruence} to obtain 
\begin{align*}
&~~~~T^H\begin{bmatrix}
    -C^HC& -C^HD\\ -D^HC & I_m-D^HD
\end{bmatrix}T\\ &=\begin{smallbmatrix}
  \sqrt{2\Re(\alpha)}  (\alpha I_n-A)^{-1}&  (\alpha I_n-A)^{-1}B\\0&I_m
\end{smallbmatrix}^H\begin{smallbmatrix}
    -\sqrt{2\Re(\alpha)}C^HC(\alpha I_n-A)^{-1}&-C^H\Tc(\alpha)\\ -\sqrt{2\Re(\alpha)}D^HC(\alpha I_n-A)^{-1}&-D^H \Tc(\alpha)+I_n
\end{smallbmatrix}\\
&=\begin{bmatrix}
    -\mathbf{C}^H\mathbf{C}&-\mathbf{C}^H\mathbf{D}\\ -\mathbf{D}^H\mathbf{C}&I_m-\mathbf{D}^H\mathbf{D}
\end{bmatrix}.
\end{align*}
Therefore, if (sKYP) holds for $X=X^H\geq 0$ then $X$ fulfills (d-sKYP) for the system $(I_n,\mathbf{A},\mathbf{B},\mathbf{C},\mathbf{D})$.
\end{proof}

Finally, we show how discrete-time scattering pH systems as in Definition~\ref{def:discpH} can be obtained from time-discretizations  \eqref{def:int_cayley} applied to standard continuous-time pH systems which are given by \eqref{def_PH} with $E=I_n$. Proposition~\ref{prop:cont_to_discrete} shows that the discretization \eqref{def:int_cayley} preserves the impedance passivity of the continuous-time pH system \eqref{def_PH} and the  storage function $V(x)=\tfrac12x^HXx$.

Then it was shown in Proposition~\ref{prop:impedance_to_scattering} that we obtain a scattering passive standard state-space discrete-time system 
\[
\begin{bmatrix}
\mathbf{A}_S&\mathbf{B}_S\\\mathbf{C}_S& \mathbf{D}_S\end{bmatrix}
:=\begin{bmatrix}
\mathbf{A}-\mathbf{B}(I_m+\mathbf{D})^{-1}\mathbf{C}&\sqrt{2}\mathbf{B}(I_m+\mathbf{D})^{-1}\\-\sqrt{2}(I_m+\mathbf{D})^{-1}\mathbf{C}&-(I_m+\mathbf{D})^{-1}(\mathbf{D}-I_m)
\end{bmatrix}
\]
which has the same storage function $V(x)=\tfrac12x^HXx$ as the continuous-time pH system \eqref{def_PH}, which coincides with the Hamiltonian of the pH system. 

Hence, if we assume that $X=X^H>0$ for the continuous-time pH system \eqref{def_PH} then we obtain an equivalent discrete-time scattering pH system in the sense of Definition~\ref{def:discpH} with the following scattering pH representation 
\[
z_{k+1}=X^{\tfrac{1}{2}}\mathbf{A}_SX^{-\tfrac{1}{2}}z_k+X^{\tfrac{1}{2}}\mathbf{B}_Su_k,\quad y_k=\mathbf{C}_SX^{-\tfrac{1}{2}}z_k+\mathbf{D}_Su_k,\quad k\geq 0.
\]

%We also have the immediate consequence for the relation between impedance and scattering passive discrete-time descriptor systems. 
%Consider a completely causal impedance passive discrete-time descriptor system of the form \eqref{discr_DAE} with coefficients $(E,A,B,C,D)$ and the associated standard state-space discrete-time system \eqref{eq:indexone_ODE} with coefficients $(I_n,\Ac,\Bc,\Cc,\Dc)$ for which there exists a solution  $0<X=X^H\in\K^{n\times n}$ of (d-iKYP). 

In this section, we have shown that the discretization-based approach to move from a continuous-time to a discrete-time via the implicit mid-point rule (or internal Cayley transformation) preserves the relationship between the different characterizations of dissipativity.
%In the next section, we discuss the geometric formulation via Dirac structures.

\section{Conclusion}

In this paper, we have considered discrete-time descriptor systems and studied different passivity concepts such as scattering and impedance passivity.
The characterizations of these concepts were analyzed via the concepts of positive (bounded) realness as well as the solvability of Kalman-Yakubovich-Popov inequalities. In addition, we have derived equivalence conditions under further assumptions. We also introduced a definition of discrete-time dissipative port-Hamiltonian systems that are purely based on the coefficient matrices and analyzed their properties.  It was shown in the paper how this new definition relates to classical definitions that are derived via the discretization of continuous-time port-Hamiltonian (descriptor) systems.

\section*{Glossary}
\begin{center}
\setlength{\tabcolsep}{2pt}
\scalebox{0.85}{
\begin{tabular}{||l |l | l||} 
 \hline
 Abbreviation & Full name & \begin{tabular}{@{}c@{}}Reference  \\ in the text\end{tabular}  \\ [0.5ex] 
 \hline\hline
 C1 & behaviorally controllable & p.\ \pageref{gl:C1}  \\ 
 \hline
 C1 and C2 & strongly controllable &  p.\ \pageref{gl:str_contrl}  \\
 \hline
 O1 & behaviorally observable & p.\ \pageref{gl:O1}  \\ 
 \hline
 O1 and O2 & strongly observable & p.\ \pageref{gl:str_obsv}  \\
 \hline
 d-sPA & discrete-time scattering passive & eq.\ \eqref{eq:dsPA}, p.\ \pageref{eq:dsPA}  \\
 \hline
  d-iPA & discrete-time
  impedance passive & eq.\  \eqref{eq:diPA}, p.\ \pageref{eq:diPA}   \\
 \hline
 d-iKYP & discrete-time KYP inequality for impedance supply rate & eq.\ \eqref{eq:diKYP}, p.\ \pageref{eq:diKYP}  \\
 \hline
  d-sKYP & discrete-time KYP inequality for scattering supply rate & eq.\ \eqref{eq:dsKYP}, p.\ \pageref{eq:dsKYP}   \\
 \hline
   d-PR & discrete-time positive real & Def.\  \ref{def:bdposreal}, p.\ \pageref{def:bdposreal}  \\
 \hline
    d-BR & discrete-time bounded real & Def.\  \ref{def:bdposreal}, p.\ \pageref{def:bdposreal}  \\
 \hline
     d-spH & discrete-time port Hamiltonian with scattering supply rate & Def.\  \ref{def:discpH}, p.\ \pageref{def:discpH} \\
 \hline
      pH & continuous-time port Hamiltonian & p.\ \pageref{gl:pH}  \\
 \hline
      PR & continuous-time positive real transfer function & p.\ \pageref{gl:PR}  \\
 \hline
      iPa & continuous-time impedance  supply rate & p.\ \pageref{gl:iPa}  \\
 \hline
      iKYP & continuous-time KYP inequality for impedance supply rate & p.\ \pageref{gl:iKYP}   \\  
 \hline
      BR & continuous-time bounded real transfer function & p.\ \pageref{gl:BR} \\
 \hline
      sPa & continuous-time scattering supply rate & p.\ \pageref{gl:sPa}  \\
 \hline
      sKYP & continuous-time KYP inequality for scattering supply rate & p.\ \pageref{gl:sKYP}  \\ %[1ex]
 \hline
\end{tabular}
}
\end{center}

\section*{Acknowledgments}
The work of K.~Cherifi has been supported by ProFIT (co-financed by the Europäischen Fonds für regionale Entwicklung (EFRE)) within the WvSC project: EA 2.0 - Elektrische Antriebstechnik (project No. 10167552).

The work of D.~Hinsen and V.~Mehrmann has been supported by the Deutsche Forschungsgemeinschaft (DFG, German Research Foundation) CRC 910 \emph{Control of self-organizing nonlinear systems: Theoretical methods and concepts of application}: Project No.~163436311 and by Bundesministerium für Bildung und Forschung (BMBF)  EKSSE: Energieeffiziente Koordination und Steuerung des Schienenverkehrs in Echtzeit (grant no. 05M22KTB).

The work of H.~Gernandt has been supported by the Deutsche Forschungsgemeinschaft (DFG, German Research Foundation) within the Priority Programme 1984 ``Hybrid and multimodal energy systems'' (Project No.~ 361092219) and the Wenner-Gren Foundation. 

Furthermore, we thank the anonymous  referees for their careful reading and valuable comments which improved the overall quality of the manuscript.

\bibliographystyle{plain}
\bibliography{sn-bibliography}

\begin{appendix}

    \section{Known results for discrete-time descriptor systems}
\subsection{Solution formula}
\label{sec:solvability}
For regular continuous-time descriptor systems of the form \eqref{cont_DAE} with commuting coefficients  $AE=EA$ a solution formula based on the \emph{Drazin inverse} has been presented in \cite[Section 2.2]{KunM06}. Note that the commutativity can always be achieved by redefining $\widehat E:=(\lambda E-A)^{-1}E$ and $\widehat A:=(\lambda E-A)^{-1}A$ for some $\lambda\in\rho(E,A)$. Let $E$ have nilpotency index $\nu$, % be the  $E$, 
%of the DAE $(E,I_n)$. 
then, see e.g.\ \cite[Theorem 2.19]{KunM06}, the Drazin inverse $E^D$ is the unique matrix satisfying 
\begin{align}
    \label{Draz_def}
EE^D=E^DE,\quad E^DEE^D=E^D,\quad E^DE^{\nu+1}=E^\nu.
\end{align}

An explicit solution formula for discrete-time systems of the form \eqref{discr_DAE} on the basis of the Drazin inverse is presented in \cite[Theorem 4.1]{Bru09}.
\begin{proposition}
\label{prop:voc_formula}
Let $E,A\in\dC^{n\times n}$ with $EA=AE$ such that $(E,A)$ is regular with index $\nu$ and let $f_k\in\dC^n$, $k\geq 0$. Then the solution of $Ex_{k+1}=Ax_k+f_k$, $k\geq 0$ is for some $v\in\dC^n$ given by 
\begin{align}
    \label{eq:brüll_formula}
x_k=(E^DA)^kE^DEv+\sum_{j=0}^{k-1}(E^DA)^{k-j-1}E^Df_j-(I-E^DE)\sum_{i=0}^{\nu-1}(A^DE)^iA^Df_{k+i}.
\end{align}
\end{proposition}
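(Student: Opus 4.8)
The plan is to decouple the recursion $Ex_{k+1}=Ax_k+f_k$ into a \emph{slow} part on which $E$ is invertible and a \emph{fast} (algebraic) part on which $E$ is nilpotent and $A$ is invertible, to solve each part explicitly, and then to recognize the three summands of the asserted formula as these two solutions rewritten in terms of the Drazin inverse. The hypothesis $EA=AE$ is what makes such a simultaneous splitting available, and the Drazin axioms \eqref{Draz_def} are exactly the algebraic identities needed to translate the block-wise solution back into coordinate-free form.

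First I would invoke the core–nilpotent decomposition of $E$: there is a similarity $T$ with $T^{-1}ET=\diag(E_1,E_2)$, where $E_1$ is invertible and $E_2$ is nilpotent of index $\nu$, and the associated spectral projector is $P=EE^D=E^DE$. Since $A$ commutes with $E$, it commutes with $P$, hence $T^{-1}AT=\diag(A_1,A_2)$ in the \emph{same} basis; by regularity of $(E,A)$, and because $E_2$ is nilpotent and commutes with $A_2$, the block $A_2$ is invertible. Applying the similarity $x=T\tilde x$ to the recursion (and writing $\tilde f_k=T^{-1}f_k$) turns it into two decoupled recursions. In these coordinates $T^{-1}E^DT=\diag(E_1^{-1},0)$ and $T^{-1}A^DT=\diag(A_1^D,A_2^{-1})$, so that $E^DE=\diag(I,0)$ and $I-E^DE=\diag(0,I)$ is the projector onto the fast block; I would confirm that these block expressions satisfy \eqref{Draz_def} and therefore are the genuine Drazin inverses. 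All of $E,A,E^D,A^D$ commute pairwise, which I use freely.

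On the slow block the recursion is $x_{k+1}^1=E_1^{-1}A_1x_k^1+E_1^{-1}f_k^1$, whose forward variation-of-constants solution is $x_k^1=(E_1^{-1}A_1)^kx_0^1+\sum_{j=0}^{k-1}(E_1^{-1}A_1)^{k-1-j}E_1^{-1}f_j^1$; identifying $E_1^{-1}A_1$ with $E^DA$ and the free datum $x_0^1$ with $E^DEv$ reproduces the first two summands. On the fast block, $E_2x_{k+1}^2=A_2x_k^2+f_k^2$ is solved for $x_k^2$ as $x_k^2=A_2^{-1}E_2x_{k+1}^2-A_2^{-1}f_k^2$; iterating and using $(A_2^{-1}E_2)^\nu=A_2^{-\nu}E_2^\nu=0$ (commutativity plus nilpotency of index $\nu$) yields the \emph{terminating} sum $x_k^2=-\sum_{i=0}^{\nu-1}(A_2^{-1}E_2)^iA_2^{-1}f_{k+i}^2$. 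Because $A_2^{-1}E_2=A^DE$ and $A_2^{-1}=A^D$ on the fast block and $I-E^DE$ projects onto it, this is precisely the third summand; reassembling the blocks gives the claimed formula.

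As an independent check, and as an alternative to changing coordinates, I would substitute the closed form directly into $Ex_{k+1}=Ax_k+f_k$ and verify the identity using only commutativity, the axioms \eqref{Draz_def}, a telescoping of the two convolution sums, and the truncation $(A^DE)^\nu(I-E^DE)=0$. The main obstacle I anticipate is the fast part: one must argue carefully that $A^D$ restricted to the nilpotent block genuinely equals $A_2^{-1}$ (so the backward recursion is exact, not merely approximate), that $I-E^DE$ isolates exactly this block while $E^DE$-terms vanish on it, and that the index $\nu$ of the pair $(E,A)$ coincides with the nilpotency index governing $E^D$ — which holds here since $A_2^{-1}E_2$ and $E_2$ share a nilpotency index because $A_2$ is invertible and commutes with $E_2$. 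The bookkeeping of index shifts and signs in the two convolution sums, together with matching ranges and kernels through the Drazin axioms, is the part most prone to error.
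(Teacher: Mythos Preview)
The paper does not actually prove this proposition: it is quoted from \cite[Theorem~4.1]{Bru09} and stated without argument as a known auxiliary result, then used in Section~\ref{sec:solvability} to prove Proposition~\ref{prop:causal}. So there is no ``paper's own proof'' to compare against.

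Your argument is correct and is the standard one. The commutativity hypothesis $EA=AE$ is exactly what allows you to simultaneously block-diagonalize $E$ and $A$ by a \emph{single} similarity into an invertible/nilpotent pair of blocks; this is the core--nilpotent decomposition of $E$, and the regularity of $(E,A)$ forces invertibility of $A_2$. The forward variation-of-constants on the slow block and the backward iteration terminating after $\nu$ steps on the fast block are both routine, and your identification of $E^DA$, $E^DE$, $A^DE$, $A^D$ with their block-diagonal forms via the Drazin axioms \eqref{Draz_def} is accurate. Your remark that the index of the pair $(E,A)$ agrees with the nilpotency index of $E_2$ (because $A_2^{-1}E_2$ and $E_2$ share the same nilpotency index when $A_2$ is invertible and commutes with $E_2$) correctly closes the one genuine gap a reader might worry about. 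The alternative direct-substitution check you outline also works and is a good sanity test.
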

This formula implies that for DAEs with index $\nu \geq 2$, the state $x_k$ at time $k$ may depend on future inputs $f_{k+1},\ldots,f_{k+\nu-1}$ leading to \emph{non-causal} systems. The formula also shows that initial values and input functions may be restricted so that there exists a $v$ in \eqref{eq:brüll_formula} satisfying
\begin{equation}\label{concon}
(E^DA)^kE^DEv=x_0+(I-E^DE)\sum_{i=0}^{\nu-1}(A^DE)^iA^Df_{i}.
\end{equation}
Such combinations of initial values $x_0$ and inhomogeneities $f_0,\ldots,f_{\nu-1}$ are called \emph{consistent}.

Apart from using Drazin inverses, an explicit solution formula based on sequences of subspaces, which are called \emph{Wong sequences} is given for continuous-time systems in \cite{BerIT12}. This approach can also be extended to discrete-time systems. Alternatively, the solutions can be derived from decoupled forms such as the Weierstra\ss\ form \eqref{eq:weier}.

In the following we use Proposition~\ref{prop:voc_formula} to prove Proposition~\ref{prop:causal}.\\[1ex]
{\bf Proof of Proposition \ref{prop:causal}:}
If \eqref{dae_discr} has index $\nu \leq 1$, then we obtain immediately from \eqref{eq:brüll_formula} that for all $i\geq 1$ $x_k$ does not depend on  future inhomogeneities $f_{k+i}$ and hence is completely causal.

Conversely, assume that the system is completely causal. Then we may assume without restriction that the coefficients $E$ and $A$ are already given in Weierstra\ss\ form. In this case, we have
\[
E=\begin{bmatrix} I&0 & 0\\0 & I & 0\\ 0& 0&N\end{bmatrix},\quad A=\begin{bmatrix} A_0&0&0\\0&A_1&0\\0&0& I\end{bmatrix},\quad E^D=\begin{bmatrix} I&0 & 0\\0 & I & 0\\ 0& 0&0\end{bmatrix},\quad A^D=\begin{bmatrix} 0&0&0\\0&A_1^{-1}&0\\0&0& I\end{bmatrix},
\]
where we have separated the nilpotent part $A_0$ in the Jordan form of $A$ form the invertible part $A_1$. 
The expressions for the Drazin inverse follow from the fact that the above matrices fulfill \eqref{Draz_def} and therefore uniquely determine the Drazin inverse. Suppose that $\nu>1$ then a short calculation using \eqref{eq:brüll_formula} shows that 

With the solution formula \eqref{eq:brüll_formula} we then get
\begin{align}
(I-E^DE)\sum_{i=0}^{\nu-1}(A^DE)^iA^Df_{k+i}
=\sum_{i=0}^{\nu-1}\begin{bmatrix} 0&0&0\\0&0&0\\0&0& N^{i} \end{bmatrix}f_{k+i}\neq 0. 
\label{eq:brüll_wcf}
\end{align}
Hence, $\nu\leq 1$ must hold because otherwise $x_k$ would depend on future inhomogeneities $f_{k+1}$.

For the proof of the second statement, we consider special inputs $f_k=Bu_k$ for a sequence $(u_k)_{k\geq 0}$ and $y_k=Cx_k$ for all $k\geq 0$. Assume that $SET$ and $SAT$ are in Weierstra\ss\ form \eqref{eq:weier} for some invertible $S,T\in\dC^{n\times n}$. Let $SB=\begin{bmatrix} B_1\\ B_2 \end{bmatrix}$ and $CT=\begin{bmatrix} C_1 & C_2   
\end{bmatrix}$, then 
\begin{align*}
D+C(zE-A)^{-1}B&=CT(zSET-SAT)^{-1}SB\\&=D+\begin{bmatrix} C_1 & C_2   
\end{bmatrix}\begin{bmatrix} (zI_r-A_f)^{-1}&0\\ 0& (zN-I_{n-r})^{-1}\end{bmatrix}\begin{bmatrix}B_1\\ B_2\end{bmatrix}.
\end{align*}
To study the properness of the considered rational functions, we consider them restricted to the set $U_R:=\{z\in\dC ~:~ \vert z\vert >R\}$ for some $R>0$. Then $z\mapsto D+C(zE-A)^{-1}B$ is proper if and only if $z\mapsto C_1(zI_r-A_f)^{-1}B_1$ and $z\mapsto C_2(zN-I_{n-r})^{-1}B_2$ are bounded on $U_R$ for some $R>0$.

If we choose $R>\|A_f\|$ then, applying the Neumann series, there exists $M>0$ satisfying for all $z\in U_R$  
\[
\|(z I_r-A_f)^{-1}\|=\vert z\vert^{-1}\|(I_r-z^{-1}A_f)^{-1}\|\leq \vert z\vert^{-1}\sum_{k=0}^{\infty}\frac{\|A_f\|^k}{\vert z\vert^k} \leq M\vert z \vert^{-1}.
\]
Hence, $z\mapsto D+C(zE-A)^{-1}B$ is proper if and only if $z\mapsto C_2(zN-I_{n-r})^{-1}B_2$ is bounded on $U_R$ for some $R>0$.
Moreover, since $N$ is nilpotent, the Neumann series yields %
\[
C_2(zN-I_{n-r})^{-1}B_2=-C_2\sum_{i=0}^{\nu-1}(zN)^iB_2
\]
which is bounded if and only if $C_2N^iB_2=0$ for all $i=1,\ldots,\nu-1$. If we consider \eqref{eq:brüll_wcf} then we see that this is equivalent to $y_k$ not depending on future inputs $u_{k+i}$ for all $i\geq 1$.
%\end{proof}

\subsection{Stability of discrete-time descriptor systems}
\label{sec:stable}
In this subsection, we recall the classic stability notions for discrete-time standard state-space systems, see e.g.\ \cite[Chapter 1]{LaS76}, \cite[Chapter 4]{HeiRV21}, or \cite{AchAM23}.
\begin{definition}\label{def:stab}
Let $A\in\K^{n\times n}$. Then the discrete-time system $x_{k+1} = A x_k$,  $k\geq 0$, is called \emph{stable} if for all $x_0\in\K^n$ there exists a constant $M>0$ such that 
\[
\|x_k\|=\|A^kx_0\|\leq M\|x_0\|,\quad \text{for all $k\geq 0$}.
\]
The system is called \emph{asymptotically stable} if $\lim\limits_{k\rightarrow\infty}x_k=0$ holds for all $x_0\in\K^n$.
\end{definition}
Asymptotic stability is then characterized as follows.
\begin{proposition}
\label{prop:stable}
Let $A\in\K^{n\times n}$. Then for the discrete-time system $x_{k+1}=Ax_k$, $k\geq 0$  the following assertions are equivalent:
\begin{itemize}
\setlength{\itemindent}{2em}
    \item[\rm (i)] the system is stable;
    \item[\rm (ii)] all eigenvalues $\lambda$ of $A$ satisfy $\vert\lambda\vert\leq 1$ and if $\vert\lambda\vert=1$ then $\lambda$ is semi-simple;
    \item[\rm (iii)] there exists a positive definite matrix $X=X^H\in\K^{n\times n}$ such that the  \emph{Lyapunov inequality} 
    \begin{equation}\label{eqn:defstable}
        -A^H X A + X \geq 0
    \end{equation}
is satisfied.
\end{itemize}
\end{proposition}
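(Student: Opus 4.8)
The plan is to prove the three equivalences by the cycle (i)$\Rightarrow$(ii)$\Rightarrow$(iii)$\Rightarrow$(i). The two implications not involving the Lyapunov inequality rest on the Jordan canonical form of $A$, while the construction of a positive definite $X$ will be the technical heart of the argument.

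For (i)$\Rightarrow$(ii), I would argue by contraposition using the Jordan form $A=TJT^{-1}$. Since $\|A^kx_0\|\le M\|x_0\|$ for every $x_0$, testing against the standard basis vectors shows that the columns of $A^k$ are uniformly bounded, hence $\sup_k\|A^k\|<\infty$, which is equivalent to $\sup_k\|J^k\|<\infty$. The powers of a single Jordan block of eigenvalue $\lambda$ and size $p$ have entries of the form $\binom{k}{j}\lambda^{k-j}$; these stay bounded only if $|\lambda|<1$, or if $|\lambda|=1$ and $p=1$. Hence boundedness of $\|A^k\|$ forces $|\lambda|\le1$ for every eigenvalue and semisimplicity for those on the unit circle, which is exactly (ii).

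For (iii)$\Rightarrow$(i), I would use $V(x):=x^HXx$ as a Lyapunov function. The inequality $-A^HXA+X\ge0$ gives $V(x_{k+1})=x_k^HA^HXAx_k\le x_k^HXx_k=V(x_k)$, so $V(x_k)\le V(x_0)$ for all $k$. Since $X=X^H>0$, writing $\lambda_{\min}(X),\lambda_{\max}(X)>0$ for its extreme eigenvalues, we obtain $\lambda_{\min}(X)\|x_k\|^2\le V(x_k)\le V(x_0)\le\lambda_{\max}(X)\|x_0\|^2$, that is $\|x_k\|\le M\|x_0\|$ with $M=(\lambda_{\max}(X)/\lambda_{\min}(X))^{1/2}$, which is (i).

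The implication (ii)$\Rightarrow$(iii) is the main obstacle, because the naive choice $X=\sum_{k\ge0}(A^H)^kA^k$ diverges as soon as eigenvalues sit on the unit circle. I would instead split the spectrum: choose an invertible $T$ with $TAT^{-1}=\diag(A_s,A_u)$, where $A_s$ collects the eigenvalues with $|\lambda|<1$ and $A_u$ those with $|\lambda|=1$. By (ii) the matrix $A_u$ is semisimple, hence diagonalizable as $A_u=S^{-1}\Lambda S$ with $\Lambda^H\Lambda=I$, so $X_u:=S^HS>0$ satisfies $A_u^HX_uA_u=X_u$. For $A_s$ the spectral radius is strictly less than one, so $X_s:=\sum_{k\ge0}(A_s^H)^kA_s^k$ converges and satisfies $X_s-A_s^HX_sA_s=I>0$. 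Setting $\widetilde X:=\diag(X_s,X_u)>0$ gives $\widetilde A^H\widetilde X\widetilde A\le\widetilde X$ blockwise for $\widetilde A:=TAT^{-1}$, and transforming back via $X:=T^H\widetilde XT>0$ yields $A^HXA=T^H\widetilde A^H\widetilde X\widetilde AT\le T^H\widetilde XT=X$, i.e.\ (iii). The delicate points I expect to verify carefully are the convergence and geometric decay of $X_s$ (most cleanly via a norm in which $\|A_s\|<1$, which exists since the spectral radius of $A_s$ is below one) and the block-diagonal spectral splitting, obtainable either from the Jordan form or from the spectral projector associated with the two disjoint spectral sets $\{|\lambda|<1\}$ and $\{|\lambda|=1\}$.
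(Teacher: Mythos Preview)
Your argument is correct. Note, however, that the paper does not actually supply a proof of this proposition: it is stated in the appendix as a classical fact with references to \cite{LaS76,HeiRV21,AchAM23}, so there is no ``paper's proof'' to compare against. Your cycle (i)$\Rightarrow$(ii)$\Rightarrow$(iii)$\Rightarrow$(i) is the standard route found in those references, and each step is sound: the Jordan-block growth analysis for (i)$\Rightarrow$(ii), the Lyapunov-function bound $\lambda_{\min}(X)\|x_k\|^2\le V(x_0)$ for (iii)$\Rightarrow$(i), and the spectral splitting with the convergent series $\sum_k(A_s^H)^kA_s^k$ on the Schur-stable block together with $X_u=S^HS$ on the diagonalizable unit-circle block for (ii)$\Rightarrow$(iii) are all correct. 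One small remark on (i)$\Rightarrow$(ii): the paper's Definition~\ref{def:stab} allows the bound $M$ to depend on $x_0$, so you should make explicit (as you essentially do) that bounding each column $A^ke_i$ by its own constant $M_i$ still gives a uniform bound $\sup_k\|A^k\|\le\sum_iM_i<\infty$ before passing to the Jordan form.
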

The same stability definition can be used for discrete-time descriptor systems, see also \cite{DuLM13,MehU23} for the continuous-time case.
\begin{definition}
Let $(E,A)$ with $E,A\in\K^{n\times n}$ be a regular pair. Then the system \eqref{dae_discr} is called \emph{stable} if there exists $M>0$ such that for all $x_0\in\K^n$ for which a solution exists one has $\|x_k\|\leq M\|x_0\|$ for all $k\geq 0$. The system is called \emph{asymptotically stable} if 
$\lim\limits_{k\rightarrow\infty}x_k=0$ holds for all $x_0\in\K^n$.
\end{definition}
If the pair $(E,A)$ is in Weierstra\ss\ canonical form \eqref{eq:weier}, then the stability of \eqref{dae_discr} is characterized by that of the standard state-space system with the matrix $A_f$. In particular, the stability definition of \eqref{dae_discr} does not restrict the index and hence still allows for non-causal systems. However, since stability characterizes the properties of solutions under perturbations of the initial value,  one can see from the solution formula, that perturbations in the initial value  may lead to inconsistent initial conditions, so that the system may not have a solution. 

In the following, we characterize stability and complete causality  of \eqref{discr_DAE} in terms of the existence of special solutions $X=X^H$ to the generalized discrete-time Lyapunov inequality 
\begin{align}
\label{eq:gen_lyap}
-A^HXA+E^HXE\geq 0.
\end{align}

\begin{proposition}
\label{prop:lyapforcausalstable}
Let $(E,A)$ with $E,A\in\K^{n\times n}$ form a regular pair. Then the system~\eqref{dae_discr} is completely causal and stable if and only if \eqref{eq:gen_lyap} has a  solution $0\leq X=X^H\in\K^{n\times n}$ which satisfies $x^HXx>0$ for all $x\in\ima E\setminus\{0\}$.
\end{proposition}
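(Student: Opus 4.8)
The plan is to reduce everything to the Weierstra\ss\ form \eqref{eq:weier} by a congruence. Writing $E=S^{-1}\widetilde E T^{-1}$ and $A=S^{-1}\widetilde A T^{-1}$ with $\widetilde E=\begin{smallbmatrix}I_r&0\\0&N\end{smallbmatrix}$, $\widetilde A=\begin{smallbmatrix}A_f&0\\0&I_{n-r}\end{smallbmatrix}$, and setting $\widehat X:=S^{-H}XS^{-1}$, I would first record two equivalences. Since $T$ is invertible, the congruence by $T^{-1}$ shows that \eqref{eq:gen_lyap} holds if and only if $-\widetilde A^H\widehat X\widetilde A+\widetilde E^H\widehat X\widetilde E\ge 0$; partitioning $\widehat X=\begin{smallbmatrix}\widehat X_{11}&\widehat X_{12}\\\widehat X_{12}^H&\widehat X_{22}\end{smallbmatrix}$ conformally, a direct multiplication turns this into $\begin{smallbmatrix}\widehat X_{11}-A_f^H\widehat X_{11}A_f&\widehat X_{12}N-A_f^H\widehat X_{12}\\N^H\widehat X_{12}^H-\widehat X_{12}^HA_f&N^H\widehat X_{22}N-\widehat X_{22}\end{smallbmatrix}\ge 0$. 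Moreover $\ima E=S^{-1}\ima\widetilde E=S^{-1}(\K^r\times\ima N)$, so the positivity condition $x^HXx>0$ on $\ima E\setminus\{0\}$ is equivalent to $z^H\widehat Xz>0$ for all $z\in(\K^r\times\ima N)\setminus\{0\}$. Finally, recall that complete causality is by Proposition~\ref{prop:causal} the same as $N=0$, and that the homogeneous solutions of \eqref{dae_discr} reduce via \eqref{eq:brüll_formula} to $A_f^k$ times a consistent initial vector, so stability of \eqref{dae_discr} is the stability of $x_{k+1}=A_fx_k$.

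For the direction ``$\Rightarrow$'', assume \eqref{dae_discr} is completely causal and stable, so $N=0$ and, by Proposition~\ref{prop:stable}, there is $\widehat X_{11}=\widehat X_{11}^H>0$ with $-A_f^H\widehat X_{11}A_f+\widehat X_{11}\ge 0$. I would then simply take $\widehat X=\begin{smallbmatrix}\widehat X_{11}&0\\0&0\end{smallbmatrix}\ge 0$; with $N=0$ the block inequality above reduces to $\begin{smallbmatrix}\widehat X_{11}-A_f^H\widehat X_{11}A_f&0\\0&0\end{smallbmatrix}\ge 0$, and on $\K^r\times\{0\}=\K^r\times\ima N$ one has $z^H\widehat Xz=v^H\widehat X_{11}v>0$. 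Transforming back by $X:=S^H\widehat XS$ produces the desired solution of \eqref{eq:gen_lyap}.

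For ``$\Leftarrow$'', suppose such an $X$, equivalently $\widehat X$, exists. Choosing $z=(v,0)$ shows $\widehat X_{11}>0$, and the $(1,1)$ block gives $-A_f^H\widehat X_{11}A_f+\widehat X_{11}\ge 0$, so Proposition~\ref{prop:stable} yields stability of $A_f$ and hence of \eqref{dae_discr}. The remaining, and main, task is to force $N=0$. Here I would use only the $(2,2)$ block $N^H\widehat X_{22}N-\widehat X_{22}\ge 0$ together with $\widehat X_{22}\ge 0$ (a diagonal block of the positive semidefinite $\widehat X$) and the fact that $\widehat X_{22}$ is positive definite on $\ima N$ (take $z=(0,Nw)$ above). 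For fixed $w$ set $a_j:=(N^jw)^H\widehat X_{22}(N^jw)\ge 0$; substituting $v=N^jw$ into $(Nv)^H\widehat X_{22}(Nv)\ge v^H\widehat X_{22}v$ shows $a_{j+1}\ge a_j$, so $(a_j)$ is nondecreasing, while nilpotency gives $a_\nu=0$. Hence $a_{\nu-1}=0$ for every $w$. If $N\ne 0$ then $\nu\ge 2$ and $N^{\nu-1}\ne 0$, so there is $w$ with $0\ne N^{\nu-1}w\in\ima N$, and positive definiteness on $\ima N$ forces $a_{\nu-1}>0$, a contradiction. Thus $N=0$, i.e.\ the index is at most one, which by Proposition~\ref{prop:causal} means \eqref{dae_discr} is completely causal.

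I expect this monotonicity-plus-definiteness argument on the nilpotent block to be the crux of the proof; the stability equivalences are routine applications of Proposition~\ref{prop:stable}, and the forward construction is immediate once the congruence to Weierstra\ss\ form is in place.
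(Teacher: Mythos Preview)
Your proof is correct and follows essentially the same route as the paper: reduce to Weierstra\ss\ form, construct $X$ from a Lyapunov solution for $A_f$ in the forward direction, and in the backward direction read off stability from the $(1,1)$ block and force $N=0$ from the $(2,2)$ block via nilpotency together with positivity on $\ima N$. Your treatment is in fact slightly more careful than the paper's---you make the congruence $\widehat X=S^{-H}XS^{-1}$ explicit and correctly translate the positivity condition to $\K^r\times\ima N$, whereas the paper simply asserts one may work in Weierstra\ss\ form; your scalar monotonicity argument $a_{j+1}\ge a_j$ with $a_\nu=0$ is just the vector-wise version of the paper's matrix induction showing $(N^H)^jX_{22}N^j\le 0$ for all $j$, and both reach the same contradiction.
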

\begin{proof}
Observe that the solvability of \eqref{eq:gen_lyap} as well as the complete causality and stability are invariant under equivalence transformations of the system. Hence we may assume that $(E,A)$ is in Weierstra\ss\ form \eqref{eq:weier}. If the pair is completely causal and stable, then the index of $(E,A)$ is at most one by Proposition~\ref{prop:causal}. Hence the generalized Lyapunov inequality \eqref{eq:gen_lyap} is given by 
\[
-\begin{bmatrix}
A_f&0\\0&I_{n-r}
\end{bmatrix}^H X \begin{bmatrix}
A_f&0\\0&I_{n-r}
\end{bmatrix}+\begin{bmatrix}
I_r&0\\0&0
\end{bmatrix}^H X\begin{bmatrix}
I_r&0\\0&0
\end{bmatrix}\geq 0,\quad X=\begin{bmatrix}
X_{11}&X_{12}\\ X_{21}& X_{22}
\end{bmatrix}. %X_{11}>0.
\]
Forming the product we get
\[
\begin{bmatrix}
A_f&0\\0&I_{n-r}
\end{bmatrix}^H X \begin{bmatrix}
A_f&0\\0&I_{n-r}
\end{bmatrix}=
\begin{bmatrix}
A_f^HX_{11}A_f&A_f^HX_{12}\\X_{21}A_f& X_{22}
\end{bmatrix}.
\]
Since \eqref{dae_discr} is stable, system $x_{k+1}=A_f x_k$  is stable and hence, by Proposition~\ref{prop:stable} there exists positive definite $0< X_{11}^H=X_{11}\in\K^{r\times r}$ such that $-A_f^HX_{11}A_f+X_{11}\geq 0$. If we choose $X_{12}=X_{21}^H=0$ and $X_{22}=0$, then $X$ solves the generalized Lyapunov inequaliy \eqref{eq:gen_lyap}. Conversely, assume that \eqref{eq:gen_lyap} has a solution $X=X^H=\begin{bmatrix}
X_{11}&  X_{12}\\ X_{21}& X_{22}
\end{bmatrix}\geq 0$ with positive definite $X_{11}>0$. Considering the upper diagonal block of \eqref{eq:gen_lyap} implies that
\[
-A_f^HX_{11}A_f+X_{11}\geq 0,%\quad %X_{11}>0,
\]
which means that $X_{11}>0$ solves the discrete-time Lyapunov inequality. Using Proposition~\ref{prop:stable} we conclude that the standard state-space system with the coefficient matrix $A_f$ is stable. Hence, by 
definition,~\eqref{discr_DAE} is stable. 

To show complete causality or equivalently, that $(E,A)$ has index $\nu\leq 1$, we consider the lower diagonal block of \eqref{eq:gen_lyap} and get
\[
-X_{22}+N^HX_{22}N\geq 0.
\]
%Denote the index of $(E,A)$ by $\nu\in\mathbb{N}$. 
Since $N^\nu=0$, then from \eqref{eq:gen_lyap} we get
\[
-(N^H)^{\nu-1}X_{22}N^{\nu-1}=-(N^H)^{\nu-1}X_{22}N^{\nu-1}+(N^H)^\nu X_{22} N^\nu\geq 0,
\]
which implies that
\[
-(N^H)^{\nu-2}X_{22}N^{\nu-2}\geq-(N^H)^{\nu-2}X_{22}N^{\nu-2}+(N^H)^{\nu-1}X_{22}N^{\nu-1}\geq 0.
\]
Repeating this argument inductively leads eventually to $-X_{22}\geq 0$. Since $X\geq 0$ we have $X_{22}\geq 0$ and hence $X_{22}=0$. If the index $\nu$ is larger than one, then $N\neq 0$ and hence the positivity condition in \eqref{eq:gen_lyap} implies that for $x=(0,Nx_2)\in\ima E\in\setminus\{0\}$ with $x_2\in\K^{n-r}$ we have 
\[
0=x_2N^HX_{22}Nx_2=x^HXx>0
\]
which is a contradiction. Thus, $N=0$ which implies that \eqref{dae_discr} has an index at most one and is therefore completely causal.
\end{proof}

In the following example we show that without the additional definiteness assumptions on the solution $X$ of \eqref{eq:gen_lyap} in Proposition~\ref{prop:lyapforcausalstable}, we can neither conclude stability nor causality.
\begin{example}\label{ex:noncausal}{\rm 
Consider $E=\begin{bmatrix}
0&1\\0&0
\end{bmatrix}$ and $A=\begin{bmatrix}
1&0\\0&1
\end{bmatrix}$ so that \eqref{discr_DAE} has index $\nu=2$ and is therefore not causal. On the other hand, $X=0$ is a positive semidefinite solution of \eqref{eq:gen_lyap}.  Furthermore, there exists no solution $X$ of \eqref{eq:gen_lyap} which is positive definite on $\ima E$ which is spanned by $e_1= [1,0]^\top $. This follows since then $e_1^H X e_1>0$, but \eqref{eq:gen_lyap} implies 
\[
0\leq -e_1^HA^HXAe_1+e_1^HE^HXEe_1=-e_1^HXe_1<0
\]
which is a contradiction. 

If we consider $E=\begin{bmatrix}
1&0\\0&1
\end{bmatrix}$ and $A=\begin{bmatrix}
1&1\\0&1
\end{bmatrix}$, then the pair has a Jordan block of size $2\times 2$ at the eigenvalue $\lambda=1$. By Proposition~\ref{prop:stable}, the system \eqref{dae_discr} is not stable and there exists no positive definite solution $X$ of the Lyapunov inequality \eqref{eq:gen_lyap}. On the other hand \eqref{eq:gen_lyap} has the trivial positive semidefinite solution $X=0$.
}
\end{example}

\section{Proofs from Section~\ref{sec:Dissipation}}
\subsection{Proof of Proposition~\ref{prop:KYPthenPa}}
\label{app:KYPthenPa}

\begin{proof}[]
If the system fufills (d-iKYP) for some $X=X^H\geq 0$, then there exists $V(x)=x^HXx$ such that for all $x_k\in\K^n$ and $u_k\in\K^m$ we have 
\begin{align}
\label{Pa_to_imp_KYP}
0&\leq \begin{bmatrix}
x_k\\ u_k
\end{bmatrix}^H\begin{bmatrix}
-A^HXA+E^HXE& C^H-A^HXB\\ C-B^HXA& D+D^H-B^HXB
\end{bmatrix}\begin{bmatrix}
x_k\\ u_k
\end{bmatrix}\\ \nonumber
&=\begin{bmatrix}
x_k\\ u_k
\end{bmatrix}^H\begin{bmatrix}
-A^HXA+E^HXE&-A^HXB\\ -B^HXA&-B^HXB
\end{bmatrix}\begin{bmatrix}
x_k\\ u_k
\end{bmatrix}+2\Re (y_k^Hu_k) \\ \nonumber
&=-(Ax_k+Bu_k)^HX(Ax_k+Bu_k)+(Ex_k)^HXEx_k +2\Re (y_k^Hu_k) \\
&=-V(Ex_{k+1})+V(Ex_k) +2\Re ( y_k^Hu_k). \nonumber
\end{align}
If the system fulfills (d-sKYP), then there exists $V(x)=x^HXx$ such that for all $x_k\in\K^n$ and $u_k\in\K^m$ we have
\begin{align}\label{skypproof}
0&\leq \begin{bmatrix}
x_k\\ u_k
\end{bmatrix}^H\begin{bmatrix}
-A^HXA+E^HXE-C^HC& -C^HD-A^HXB\\ -D^HC-B^HXA& I-D^HD-B^HXB
\end{bmatrix}\begin{bmatrix}
x_k\\ u_k
\end{bmatrix}\\ \nonumber
&=\begin{bmatrix}
x_k\\ u_k
\end{bmatrix}^H\begin{bmatrix}
-A^HXA+E^HXE&-A^HXB\\ -B^HXA&-B^HXB
\end{bmatrix}\begin{bmatrix}
x_k\\ u_k
\end{bmatrix}+u_k^Hu_k-y_k^Hy_k \\ \nonumber
&=-(Ax_k+Bu_k)^HX(Ax_k+Bu_k)+(Ex_k)^HXEx_k +u_k^Hu_k-y_k^Hy_k  \\
&=-V(Ex_{k+1})+V(Ex_k) +u_k^Hu_k-y_k^Hy_k. \nonumber
\end{align}
\end{proof}

\subsection{Proof of Corollary~\ref{cor: pakyp}}
\label{app:pakyp}

\begin{proof}[]
The property (d-iPa) implies \eqref{Pa_to_imp_KYP} for solutions $(x_k,u_k)$. For $k=0$ we obtain from \eqref{eq:indexone_ODE} that 
\[
\begin{bmatrix} x_0^1\\x_0^2\\u_0\end{bmatrix}=\begin{bmatrix}\Sigma_E^{-\tfrac{1}{2}}\widehat x_0\\A_{22}^{-1}(-A_{21}x_0^1-B_2u_0)\\u_0\end{bmatrix}=\underbrace{\begin{bmatrix} \Sigma_E^{-\tfrac{1}{2}}&0\\-A_{22}^{-1}A_{21}\Sigma_E^{-\tfrac{1}{2}}&-A_{22}^{-1}B_2\\0&I_m\end{bmatrix}}_{=:S}\begin{bmatrix}
\widehat x_0\\ u_0
\end{bmatrix}
\]
for some $\widehat x_0\in\dC^r$ and some $u_0\in\dC^m$. This and \eqref{Pa_to_imp_KYP} imply
\begin{align*}
&0\leq S^H \begin{bmatrix}
-A^H XA+E^HXE&C^H-A^HXB\\C-B^HXA&D+D^H-B^HXB
\end{bmatrix}S\\
&
=S^H \begin{bmatrix}
-A^H X\begin{bmatrix}\Sigma_E^{\tfrac{1}{2}}\Ac\\0\end{bmatrix}+E^HX\begin{bmatrix}\Sigma_E^{\tfrac{1}{2}}\\0\end{bmatrix}&\begin{bmatrix} C_1^H \\ C_2^H
\end{bmatrix}-A^HX\begin{bmatrix} \Sigma_E^{\tfrac{1}{2}}\Bc\\0\end{bmatrix} \\ 
\Cc-B^HX\begin{bmatrix}\Sigma_E^{\tfrac{1}{2}}\Ac\\0\end{bmatrix}
&\Dc+D^H-B^HX\begin{bmatrix} \Sigma_E^{\tfrac{1}{2}}\Bc\\0\end{bmatrix}
\end{bmatrix} \\
&=\begin{bmatrix}
-\begin{smallbmatrix} \Ac^H\Sigma_E^{\tfrac{1}{2}}&0\end{smallbmatrix}X\begin{smallbmatrix}\Sigma_E^{\tfrac{1}{2}}\Ac\\0\end{smallbmatrix}+\begin{smallbmatrix} \Sigma_E^{\tfrac{1}{2}}&0 \end{smallbmatrix}X\begin{smallbmatrix}\Sigma_E^{\tfrac{1}{2}}\\0\end{smallbmatrix}& \Cc^H - \begin{smallbmatrix} \Ac^H\Sigma_E^{\tfrac{1}{2}}&0\end{smallbmatrix}X\Sigma_E^{\tfrac{1}{2}}\begin{smallbmatrix}\Bc\\0\end{smallbmatrix} \\
\Cc-\begin{smallbmatrix}\Bc^H\Sigma_E^{\tfrac{1}{2}}&0\end{smallbmatrix}X\begin{smallbmatrix}\Sigma_E^{\tfrac{1}{2}}\Ac\\0\end{smallbmatrix}
&\Dc+\Dc^H-\begin{smallbmatrix} \Sigma_E^{\tfrac{1}{2}}\Bc\\0\end{smallbmatrix}^HX\begin{smallbmatrix} \Sigma_E^{\tfrac{1}{2}}\Bc\\0\end{smallbmatrix}
\end{bmatrix}\\
&=\begin{bmatrix}
- \Ac^H\Sigma_E^{\tfrac{1}{2}}X_{11}\Sigma_E^{\tfrac{1}{2}}\Ac+\Sigma_E^{\tfrac{1}{2}}X_{11}\Sigma_E^{\tfrac{1}{2}}& \Cc^H - \Ac^H\Sigma_E^{\tfrac{1}{2}}X_{11}\Sigma_E^{\tfrac{1}{2}}\Bc \\
\Cc-\Bc^H\Sigma_E^{\tfrac{1}{2}}X_{11}\Sigma_E^{\tfrac{1}{2}}\Ac
&\Dc+\Dc^H-\Bc^H \Sigma_E^{\tfrac{1}{2}}X_{11}\Sigma_E^{\tfrac{1}{2}}\Bc
\end{bmatrix},
\end{align*}
where we used in the last step the decomposition  $X=\begin{bmatrix}X_{11}&X_{12}\\ X_{21}& X_{22}\end{bmatrix}$. Then the condition $X=X^H\geq 0$ implies that $\mathcal{X}:=\Sigma_E^{\tfrac{1}{2}}X_{11}\Sigma_E^{\tfrac{1}{2}}\geq 0$. Conversely, if (d-iKYP) holds for the system $(I_n,\Ac,\Bc,\Cc,\Dc)$ for some $\mathcal{X}\geq 0$ then we can define $X:=\begin{bmatrix}
\mathcal{X}&0\\0&0
\end{bmatrix}\geq 0$ which can be used as a storage function implying that (d-iPa) holds. 

The proof for the scattering passive case is analogous.
\end{proof}

\subsection{Proof of Lemma~\ref{lem:KYP_without_supply}}
\label{app:KYP_without_supply}

\begin{proof}[]  The identity holds from the following calculations 
\begin{align*}
 & \begin{bmatrix}
    (zE-A)^{-1}B\\ I_m
    \end{bmatrix}^H\begin{bmatrix}
    -A^HXA+E^HXE& -A^HXB\\ -B^HXA & -B^HXB
    \end{bmatrix}\begin{bmatrix}
    (zE-A)^{-1}B\\ I_m
    \end{bmatrix}\\ &=\begin{bmatrix}
    (zE-A)^{-1}B\\ I_m
    \end{bmatrix}^H\begin{bmatrix}
    -A^HX(A-zE+zE)+E^HXE& -A^HXB\\ -B^HX(A-zE+zE) & -B^HXB
    \end{bmatrix}\begin{bmatrix}
    (zE-A)^{-1}B\\ I_m
    \end{bmatrix}\\ 
    &=\begin{bmatrix}
    (zE-A)^{-1}B\\ I_m
    \end{bmatrix}^H\begin{bmatrix}
    -zA^HXE(zE-A)^{-1}B+E^HXE(zE-A)^{-1}B\\-zB^HXE(zE-A)^{-1}B
    \end{bmatrix}\\
    &=\begin{smallbmatrix}
    (zE-A)^{-1}B\\ I_m
    \end{smallbmatrix}^H\begin{smallbmatrix}
    -z(A+zE-zE)^HXE(zE-A)^{-1}B+E^HXE(zE-A)^{-1}B\\-zB^HXE(zE-A)^{-1}B
    \end{smallbmatrix}\\
    &=B^H(zE-A)^{-H}(-z(A+zE-zE)^HXE(zE-A)^{-1}B+E^HXE(zE-A)^{-1}B)\\&~~~+(-zB^HXE(zE-A)^{-1}B)\\
    &=(1-\vert z\vert^2)B^H(zE-A)^{-H}E^HXE(zE-A)^{-1}B.
\end{align*}
\end{proof}

\subsection{Proof of Proposition~\ref{prop: brpa}}
\label{app:brpa}

 \begin{proof}[]
Impedance passivity (d-iPa) implies that there exists $X\geq 0$ satisfying the inequality
\begin{align}
\label{eq:kyp_in_proof}
 0\leq \begin{bmatrix}
    x_k\\ u_k
    \end{bmatrix}^H\begin{bmatrix}
    -A^HXA+E^HXE& C^H-A^HXB\\ C-B^HXA & D+D^H-B^HXB
    \end{bmatrix}\begin{bmatrix}
    x_k\\ u_k
    \end{bmatrix}
\end{align}
for all $x_k\in\K^n$, $u_k\in\K^m$, which solve \eqref{discr_DAE} for all $k\geq 0$.  Furthermore, \cite[Proposition 2.11]{BanV19} and Lemma~\ref{lem:KYP_without_supply} imply that
\begin{align*}
 0&\leq \begin{bmatrix}
    (zE-A)^{-1}B\\ I_m
    \end{bmatrix}^H\begin{bmatrix}
    -A^HXA+E^HXE& C^H-A^HXB\\ C-B^HXA & D+D^H-B^HXB
    \end{bmatrix}\begin{bmatrix}
    (zE-A)^{-1}B\\ I_m
    \end{bmatrix}
    \\&=(1-\vert z\vert^2)B^H(zE-A)^{-H}E^HXE(zE-A)^{-1}B\\&~~~~+\begin{bmatrix}
    (zE-A)^{-1}B\\ I_m
    \end{bmatrix}^H\begin{bmatrix}
    0& C^H\\ C& D+D^H
    \end{bmatrix}\begin{bmatrix}
    (zE-A)^{-1}B\\ I_m
    \end{bmatrix}
    \\&=(1-\vert z\vert^2)B^H(zE-A)^{-H}E^HXE(zE-A)^{-1}B\\&~~~~+C(zE-A)^{-1}B+(C(zE-A)^{-1}B)^H+D+D^H.
\end{align*}
Since $X\geq0$, we have for all $\vert z\vert>1$ with $z\in\dC\setminus\sigma(E,A)$ that $\Tc(z)+\Tc(z)^H\geq 0$ holds. To conclude that the transfer function is positive real it remains to show that it has no poles $z$ satisfying $\vert z\vert>1$. The set of poles of $\Tc(z)=C(zE-A)^{-1}B+D$ is a subset of $\sigma(E,A)$. Hence, if all elements $z\in\sigma(E,A)$ have modulus at most one, then the transfer function is positive real. Therefore suppose that there exists $z\in\sigma(E,A)$ with $\vert z\vert>1$. Then $zEx=Ax$ for some $x\in\dC^n$, $x\neq 0$, and for $(x_0,u_0)=(x,0)$ the KYP inequality 
(d-iKYP) (resp. (d-sKYP)) leads to
\begin{align}
\label{eq:upperblock}
0\leq -x^HA^HXAx+x^HE^HXEx=(Ex)^H((1-\vert z\vert^2)X)Ex.
\end{align}
The regularity of $(E,A)$ and the fact that $z$ is finite, implies $Ex\neq 0$. 
Therefore, \eqref{eq:upperblock} holds with equality.
Hence $Ex\in\ker X$ and also $Ax=zEx\in\ker X$. Considering the KYP inequality \eqref{eq:kyp_in_proof} applied to elements of the form $[\alpha x^\top,\alpha u^\top]^\top$ for some  $\alpha>0$ and $u=-Cx$ leads to 
\[
0\leq -\alpha 2\re (Cx)^HCx+\alpha^2(Cx)^H(D+D^H-B^HXB)Cx.
\]
Thus, for $\alpha>0$ sufficiently small 
for  sufficiently small,  we conclude that $Cx=0$ and therefore the eigenvalue $z$ is not a pole of the transfer function.

An analogous proof for (d-sPa) follows from 

\[
\begin{bmatrix}
    (zE-A)^{-1}B\\ I_m
    \end{bmatrix}^H\begin{bmatrix}
   -C^HC& -C^HD\\ -D^HC & I_n-D^HD
    \end{bmatrix}\begin{bmatrix}
    (zE-A)^{-1}B\\ I_m
    \end{bmatrix}=I_m-\mathcal{T}(z)^H\mathcal{T}(z),
\]
which implies 
that $I_m-\mathcal{T}(z)^H\mathcal{T}(z)\geq 0$ for all $\vert z\vert> 1$ with $z\in\dC\setminus\sigma(E,A)$. 
\end{proof}

\subsection{Proof of Lemma~\ref{lem:TF}}

\label{app:TF}

\begin{proof}[]
Assertion (a) follows from standard calculations using the Schur complement and is therefore left to the reader. Here one may assume without loss of generality that the system is already in the semiexplicit form \eqref{semiexplicit}. Then the transfer functions are related in the following way
\begin{align}\label{dtoE}
\Tc(z)&=\begin{bmatrix}C_1 &C_2\end{bmatrix}\begin{bmatrix}
z\Sigma_E-A_{11}&-A_{12}\\ -A_{21}&-A_{22}
\end{bmatrix}^{-1}\begin{bmatrix} B_1\\ B_2\end{bmatrix} + D\nonumber\\
&=\mathcal{C}(z\Sigma_E-\mathcal{A})^{-1}\mathcal{B}+\mathcal{D}\nonumber \\
&=\mathcal{C}\Sigma_E^{-\tfrac{1}{2}}(zI_r-\mathcal{A})^{-1}\Sigma_E^{-\tfrac{1}{2}}\mathcal{B}+\mathcal{D}.
\end{align}
For the proof of (b) we may again assume that the system is in the semiexplicit form \eqref{semiexplicit}, because the transformations to this form neither changes the controllability nor the observability properties of the system.  Since $A_{22}$ is invertible, we have that  $\begin{bmatrix}\lambda E-A & B \end{bmatrix}$ has full rank if and only if  $\begin{bmatrix}\lambda \Sigma_E-\Ac &\Bc\end{bmatrix}$ has full rank. Similarly, it follows that  
$\begin{smallbmatrix}
\lambda E-A\\ C
\end{smallbmatrix}$ has full rank if and only if $\begin{smallbmatrix}
\lambda \Sigma_E-\Ac\\ \Cc 
\end{smallbmatrix}$ has full rank. Furthermore, by applying invertible transformations we get the two rank conditions 
\begin{align*}
\rk \begin{bmatrix}
\lambda I_n-\Sigma_E^{-\tfrac{1}{2}}\Ac\Sigma_E^{-\tfrac{1}{2}}\\ \Cc\Sigma_E^{-\tfrac{1}{2}} 
\end{bmatrix}&=\rk\begin{bmatrix}
\lambda \Sigma_E-\Ac\\ \Cc 
\end{bmatrix}=n,\\ \rk \begin{bmatrix}
\lambda I_n-\Sigma_E^{-\tfrac{1}{2}}\Ac\Sigma_E^{-\tfrac{1}{2}} & \Sigma_E^{-\tfrac{1}{2}}\Bc 
\end{bmatrix}&=\rk\begin{bmatrix}
\lambda \Sigma_E-\Ac &\Bc 
\end{bmatrix}=n.
\end{align*}
\end{proof}

\end{appendix}

\end{document}